\crefname{hypothesis}{Hypothesis}{Hypotheses}
\title{High performance implementation of 3D FEM for nonlocal Poisson problem with different ball approximation strategies}
\author{Gengjian Chen  \thanks {School of Mathematics and Statistics, Wuhan University, Wuhan 430072, China.}
\and Yuheng Ma
 \thanks {School of Mathematics and Statistics, Wuhan University, Wuhan 430072, China.}
\and Jiwei Zhang
	\thanks{School of Mathematics and Statistics, and Hubei Key Laboratory of Computational Science, Wuhan University, Wuhan 430072, China (jiweizhang@whu.edu.cn). }}
\begin{document}

\maketitle

\begin{abstract}
Nonlocality brings many challenges to the implementation of finite element methods (FEM) for nonlocal problems, such as a large number of neighborhood query operations being invoked on the meshes. {Besides}, the interactions are usually limited to Euclidean balls, so direct numerical integrals often introduce numerical errors. {The issues of interactions between the ball and finite elements have to be carefully dealt with}, such as using ball approximation strategies.
In this paper, an efficient representation and construction methods for approximate balls are presented based on the combinatorial map, and an efficient parallel algorithm is also designed for the assembly of nonlocal linear systems. Specifically, a new ball approximation method based on Monte Carlo integrals, i.e., the fullcaps method, is also proposed to compute numerical integrals over the intersection region of an element with the ball.
\end{abstract}

\begin{keywords}
{Nonlocal problem}, finite element method, combinatorial map, approximate ball, Monte Carlo integration, parallel computing
\end{keywords}

\begin{MSCcodes}
65Y10; 65D30; 37M99; 34K28; 34A45.
\end{MSCcodes}

\section{Introduction}
The nonlocal operators have been applied in various fields \cite{peridynamics:du,phase:bates,image:gilboa}. Because of the wide application of nonlocal operators, many numerical algorithms have been developed for solving nonlocal problems effectively, including finite difference method \cite{difference:du2019,difference:tian2013}, finite element method \cite{element:du2019conforming,d2021cookbook} and collocation method \cite{collection:tian2013efficient,collection:zhang2018accurate}. Among these methods, the advantages of precision and stability arising from the finite element method (FEM) \cite{GL10,DG13,DG21} are worth its application in solving nonlocal problems. But, nonlocality brings some new difficulties in FEM implementations, especially for three-dimensional (3D) case.




A comprehensive description of the computational challenges that arise in the implementation of FEM {for nonlocal problems} can be found in \cite{d2021cookbook}. For nonlocal problems, the integration region is complex, using classical quadrature rules directly to compute the integrals may introduce additional errors. 
For the case of {fixing} interaction horizon, the authors of \cite{d2021cookbook} have proposed a {new} method to avoid {this issue} by introducing the concept of {approximate balls}, 
but some ball approximation strategies are difficult to implement in 3D. 
In \cite{PS22}, Pasetto et al. compute the inner integration by using quadrature points distributed over the full ball. 
In \cite{AC21}, the authors propose a technique that allows direct computation of the inner integral over the element directly by smoothing the kernel function. It is pointed out that the smoothed kernel
allows the use of classical quadrature rules over each element without using ball approximation strategies, which is easier to implement. 

A large number of works have implemented finite element solutions for nonlocal problems while using uniform meshes or quasi-uniform meshes in 1D, 2D \cite{approx,DW15,WT12,WT14} and 3D cases \cite{VS19}, and developed many fast stiffness matrix assembly and solution algorithms based on uniform meshes \cite{DW15,LC18,TE19,WW17}. Unlike local problems, the solutions of nonlocal problems require a large number of element calls and queries to the mesh, which is difficult to implement when using unstructured meshes. 
In \cite{d2021cookbook} and \cite{Yin}, the ball approximation strategies are introduced and analyzed to deal with integrals over the interaction domain of interaction ball and element more precisely. But this again increases the difficulty {when approximating a ball} on unstructured meshes. 
Fortunately, a new data structure called combinatorial maps \cite{cmap:2007,cmap:2010} is particularly good at handling operations on meshes, including queries and modifying unstructured meshes dynamically, and has been applied in the field of computer graphics \cite{DL14}. By reviewing the characteristics of this data structure, we believe that this data structure is very suitable for describing the unstructured mesh {when} solving nonlocal problems in any dimension. 

{As far as we know, this is the first effort that discusses the implementation issues of FEM for solving nonlocal problems in high dimensions using combinatorial map in detail.} In this paper, we {study the numerical implementation issues} for solving nD($n \geq 3$) nonlocal {Poisson} problems, including efficient neighborhood queries, ball approximation strategies and the fast matrix assembly needed by nonlocal problems' {solution}. In Section \ref{sec:background}, the definitions and notations of nonlocal problems and the weak form of the nonlocal Poisson problems are reviewed. In Section \ref{sec:discretization}, we 
discuss in detail the definitions of the ball approximation strategies in nD. {The estimates of geometric errors of these ball approximation strategies in nD are also presented}. In Section \ref{sec:implementation}, we discuss the implementation of FEM for nonlocal {problems} based on combinatorial map data structure. We then design some algorithms for constructing the nonlocal approximate ball, such as topological iterators developed based on the combinatorial map. Subsequently, we present a detailed assembly procedure to compute numerical solutions for the nonlocal model. 
{Finally,} in Section \ref{sec:numerical}, the 3D numerical result {shows} the effectiveness and accuracy of our implementation.

\section{Background and notations}
\label{sec:background}

In this section, we introduce the mathematical definitions and results from previous studies that will be used throughout the paper, along with their corresponding notations. In particular, the weak form of the nonlocal Poisson problem with Dirichlet boundary condition is formulated in detail, and the finite element discretization for nonlocal problems {delivered in next section} is based on this weak form.

\subsection{Setting of nonlocal problem}
We consider the nonlocal effect with finite interaction horizon, i.e. define a kernel $\gamma(x,y):\mathbb{R}^n \times \mathbb{R}^n \rightarrow \mathbb{R}$ as a nonnegative and symmetric function for every fixed $x$, the support of $\gamma(x,\cdot)$ is assumed to be in a bounded Euclidean ball $B_\delta(x)$ centered at $x$ with the interaction radius $\delta>0$ \cite{peridynamics:du}.

The kernel can be written as
\begin{equation}\label{Dgamma}
\gamma(x,y) = \psi(x,y) \mathcal{X}_{B_\delta(x)}(y),
\end{equation}
where $\mathcal{X}_{B_\delta(x)}(y)$ is an indicator function such that the ball $B_{\delta}(x)$ is the support of $\gamma(x, \cdot)$, and $\psi(x,y):\mathbb{R}^n \times \mathbb{R}^n \rightarrow \mathbb{R}$ is a symmetric and positive function denoted as the kernel function.


Without loss of generality, we always assume in this paper that the kernel is square integrable, and translation-invariant, namely
\begin{equation}
\left\{
\begin{array}{ll}
&\int_{B_\delta(x)} \gamma(x,y)^2 dy <\infty,\\
&\psi(x,y)=\psi(x+a,y+a), \quad
 \quad\forall a\in\mathbb{R}^n,
\end{array}
\right.
\label{ass:kernel} \end{equation}
The results presented in this paper can be easily {generalized} to the case of non-symmetric kernels \cite{approx} and some sign-changing kernels \cite{MD13}. The nonlocal operator $\mathcal{L}$ associated with $\gamma(x,y)$ is defined as
\begin{equation}\label{def:nonlocal_operator}
\begin{aligned}
\mathcal{L}u(x) := 2\int_{B_{\delta}(x)} (u(y) - u(x)) \psi(x,y)dy,\quad\forall\ x\in\mathbb{R}^n.
\end{aligned}
\end{equation}

Let $\Omega \subset \mathbb{R}^n$ be a bounded and open domain. We define a set $\Omega_\mathcal{I}$ that contains those points in the domain $\mathbb{R}^n \setminus \Omega$ that interact with points in $\Omega$ through the kernel  $\gamma$. The set $\Omega_\mathcal{I}$ is called by the interaction domain corresponding to $\Omega$ and $\gamma$, and can be defined mathematically as
\begin{equation}\label{eq:omegai}
\Omega_\mathcal{I} = \{ y \in \mathbb{R}^n\setminus\Omega: \exists\ x\in\Omega \text{ such that } |x-y| \leq \delta  \}.
\end{equation}
We denote $\hat{\Omega}:=\Omega\cup\Omega_{\mathcal{I}}$ in the remainder of the paper.


We can now present the nonlocal problem considered in this paper. For a bounded and open domain $\Omega \subset \mathbb{R}^n$, the nonlocal Poisson problem is defined as:
\begin{equation}
\left\{
             \begin{array}{llr}
             -\mathcal{L}u(x)=f(x) & \text{for } x \in \Omega,  \\
             u(x)=g(x) & \text{for } x \in \Omega_\mathcal{I}.
             \end{array}
\right.
\label{eq:nonlocalproblem}
\end{equation}
With the given source item $f:\Omega \rightarrow \mathbb{R}$ and the $g:\Omega_\mathcal{I} \rightarrow \mathbb{R}$, the problem needs to determine $u: \Omega \rightarrow \mathbb{R}$. The second equation in \eqref{eq:nonlocalproblem} is called the nonlocal Dirichlet volume constraint. {We only consider Dirichlet boundary conditions in this paper, and the implementation in this paper can be applied to problems with Neumann boundary conditions considered in \cite{neumann:osti_1769929,neumann:diffusion} naturally.}

\subsection{Weak Formulation}

By applying the nonlocal Green's first identity \cite{D2019}, the weak form of nonlocal problem \eqref{eq:nonlocalproblem} is given as
\begin{equation}\label{eq:weak0}
    \int_{\Omega \cup \Omega_{\mathcal{I}}} \int_{\Omega \cup \Omega_{\mathcal{I}}} (u(y) - u(x))(v(y)- v(x))\gamma(x,y)dydx = \int_{\Omega}v(x)f(x)dx,
\end{equation}
where test function $v(x)$ is any smooth function satisfying $v(x)=0$ for $x \in \Omega_{\mathcal{I}}$.


{We encounter here a double integral $\int (\int dy) dx$ in the weak form of the nonlocal problem.} For ease of illustration, $\int dy$ is denoted as the inner integral and $\int dx$ is denoted as the outer integral. {According to \cite{d2021cookbook}, by using the Dirichlet volume constraint in \eqref{eq:nonlocalproblem} and the symmetry of kernel $\gamma(x,y)$, the weak form \eqref{eq:weak0} derives another weak form:}
\begin{equation}\label{eq:weak1}
    \begin{split}
    &\int_{\Omega} \int_{\Omega} \big(u(y)-u(x)\big)\big(v(y)-v(x)\big)\gamma(x,y)dydx\\
&+ 2 \int_{\Omega}u(x)v(x) \int_{\Omega_{\mathcal{I}}} \gamma(x,y)dydx \\
    & =  2 \int_{\Omega}v(x) \int_{\Omega_{\mathcal{I}}} g(y)\gamma(x,y)dydx
+ \int_{\Omega}v(x)f(x)dx.
    \end{split}
\end{equation}
Thus,  equation \eqref{eq:weak1} can be rewritten as
\begin{equation}
    A(u,v) = F(v),    \label{eq:weak}
\end{equation}
where the left hand side of \eqref{eq:weak} is a symmetric bilinear form
\begin{equation}
    \begin{split}
        A(u,v) := &\int_{\Omega} \int_{\Omega} \big(u(y)-u(x)\big)\big(v(y)-v(x)\big)\gamma(x,y)dydx \\
        &+ 2 \int_{\Omega} u(x)v(x)\big(\int_{\Omega_{\mathcal{I}}} \gamma(x,y)dy\big)dx.
    \end{split}
    \label{eq:left}
\end{equation}
The right hand side of \eqref{eq:weak} is a linear form
\begin{equation}
    F(v) := \int_{\Omega} v(x)\big(f(x)+2\int_{\Omega_{\mathcal{I}}}g(y)\gamma(x,y)dy\big)dx.
    \label{eq:right}
\end{equation}

\section{Finite Element Discretization and Error Estimate}
\label{sec:discretization}

We now consider the finite element discretization of weak formulation \eqref{eq:weak} defined on a {triangulation}. The theoretical results of estimation for geometric errors in general $n$-dimensional case are established. At the end of this section, we also discuss the choice {of} ball approximation strategies and quadrature rules in nD. {Without loss of generality, we restrict ourselves to general continuous, piecewise linear Lagrange polynomial basis.}

\subsection{Finite Element Grids}

Let $\mathcal{T}^{h}_{\Omega}$ denotes an $n$-dimensional triangulation (cell-decomposition) dividing $\Omega$ into $K_{\Omega}$ finite elements $\{\mathcal{E}_k\}_{k=1}^{K_\Omega}$ \cite{BS94}, where each finite element of $\{\mathcal{E}_k\}_{k=1}^{K_\Omega}$ is an $n$-dimensional simplex, and $h$ is the maximum distance between two adjacent vertices.
{However}, it is generally impossible to exactly triangulate $\Omega_\mathcal{I}$ defined in \eqref{eq:omegai} into simplex elements, {because nonlocality will create rounded corners to $\Omega_\mathcal{I}$}. One way to solve this problem is to triangulate another polytope domain that approximates $\Omega_\mathcal{I}$ into simplex elements. Another way is introduced in \cite{d2021cookbook} by replacing rounded corners with vertices. For either method, we still denote the new domain as $\Omega_\mathcal{I}$, and $\mathcal{T}^{h}_{\Omega_\mathcal{I}}$ as a triangulation of $\Omega_\mathcal{I}$ into $K_{\Omega_\mathcal{I}}$ finite elements. The elements on $\Omega_{\mathcal{I}}$ are denoted as $\{\mathcal{E}_k\}_{k=K_\Omega+1}^{K_\Omega+K_{\Omega_\mathcal{I}}}$.

We require that the subdivisions of $\Omega$ and $\Omega_\mathcal{I}$ must coincide. This is, the cells in $\Omega$ and $\Omega_\mathcal{I}$ do not straddle across the internal boundary $\partial \Omega$ \cite{d2021cookbook}. This means $\mathcal{T}^h_{\Omega\cup\Omega_\mathcal{I}}:=\mathcal{T}^h_{\Omega}\cup\mathcal{T}^h_{\Omega_\mathcal{I}}$ is a triangulation of $\Omega\cup\Omega_\mathcal{I}$ into $K=K_{\Omega_\mathcal{I}}+K_{\Omega}$ finite elements. In this paper, we always assume $h<\delta/2$.

\begin{remark}
The case of a programming implementation of the $2$-dimensional finite element when $h$ is set large enough compared to $\delta$ is also discussed in detail in \cite{d2021cookbook}. In practical applications, the mesh size $h$ and the horizon parameter $\delta$ satisfy a proportional natural condition $h=\mathcal{O}(\delta)$ \cite{TD14,TD20}. For example, the parameter configuration such as $3h\approx\delta$ is preferred in the $1$-dimensional and $2$-dimensional nonlocal problems \cite{BH12,PL08}. 
Since $h<\delta/2$ is used more frequently in practical programming implementations and it is easier for us to build our theory about ball approximation strategies, {this assumption is acceptable}.
\end{remark}

\subsection{Finite Element Space and the Discretization of the Weak Formulations}

Let $\{\widetilde x_j\}_{j=1}^J $ denote the set of nodes associated to triangulation $\mathcal{T}^h_{\Omega\cup\Omega_\mathcal{I}}$, where nodes $\{\widetilde{x}_j \}_{j=1}^{J_\Omega} $ are located in the open domain $\Omega$ and the nodes $\{\widetilde x_j\}_{j=J_\Omega+1}^{J}$ are located in {the} closed domain ${\Omega}_{\mathcal{I}}$. This means that the nodes located on $\partial \Omega = {\overline{\Omega} \cap {\Omega}_{\mathcal{I}}}$ are assigned to ${\Omega}_{\mathcal{I}}$. Then, for $j = 1, \cdots, J $, let $\phi_j (x) $ denote a continuous piecewise-{linear} function such that $\phi_j(\widetilde{x}_{j'}) = \delta_{jj'}$ for $j'=1, \cdots, J $, where $\delta_{jj'}$ denotes the Kronecker delta function. We then define the finite element spaces by

$$
V^h = \mbox{span} \{ \phi_j(x) \}_{j=1}^J \subset V(\Omega\cup\Omega_\mathcal{I})
\quad\mbox{and}\quad
V^h_c = \mbox{span} \{ \phi_j(x) \}_{j=1}^{J_\Omega}  \subset V_c(\Omega\cup\Omega_\mathcal{I}).
$$

According to our definition, functions belonging to $V^h$ and $V^h_c$ are continuous on $\Omega\cup\Omega_\mathcal{I}$.

The finite element approximation $u_h \in V^h$ can be written as the linear representation of basis functions. The volume constraint is applied at the nodes in $\Omega_\mathcal{I}$, including the nodes located on the boundary $\partial \Omega$, to set $U_j = g(\widetilde{x}_j)$ for $\widetilde{x}_j \in \mathcal{T}_{\Omega_\mathcal{I}}$. So we have
\begin{equation}
    u_h(x) =
  \sum_{j=1}^{J} U_j \phi_j(x) = \sum_{j=1}^{J_\Omega} U_j \phi_j(x) + \sum_{j=J_\Omega + 1}^J g(\widetilde x_j) \phi_j(x) \in V^h.
    \label{eq:u}
\end{equation}


Substituting \eqref{eq:u} into \eqref{eq:weak1} and choosing $v(x)\in V_c^h$, we have a linear system as
\begin{equation}
    \sum_{j=1}^{J_\Omega} A(\phi_{j},\phi_{i})U_j = F(\phi_{i})  -  \sum_{j=J_\Omega+1}^{J} A(\phi_{j},\phi_{i})g(\widetilde x_j)\quad  \mbox{for $i=1,\ldots,J_\Omega$},
    \label{eq:linear}
\end{equation}
where
\begin{equation}
    \begin{split}\label{eq:A}
        A(\phi_j,\phi_i) =& \sum_{\mathcal{E}_k \in \mathcal{T}^h_{\Omega}} \int_{\mathcal{E}_k} \int_{\Omega\cap B_{\delta}(x)} (\phi_j(y) - \phi_j(x)) (\phi_i(y) - \phi_i(x) )\psi(x,y)dydx\\
        &+ 2 \sum_{\mathcal{E}_k \in \mathcal{T}^h_{\Omega}} \int_{\mathcal{E}_k} \phi_j(x) \phi_i(x)(\int_{\Omega_\mathcal{I}\cap B_{\delta}(x)}\psi(x,y) dy)dx  .
    \end{split}
\end{equation}
The components of the $J_{\Omega}$-dimensional right-hand side vector are given by
\begin{equation}
    \begin{split}\label{eq:F}
    F(\phi_{i}) = \sum_{\mathcal{E}_k \in \mathcal{T}^h_{\Omega}}\int_{\mathcal{E}_k}\phi_{i}(x) \Big(f(x) +
2\int_{\Omega_{\mathcal I}\cap B_\delta(x)} g(y)\psi(x,y)dy
  \Big) \,dx.
    \end{split}
\end{equation}
Notice {that the support of $\phi_i$ is a subset of $\Omega$}, so for $\phi_j$ that corresponding to $\widetilde x_{j>J_\Omega}\notin\partial \Omega$, we have $A(\phi_j,\phi_i)=0$. So the linear system in \eqref{eq:linear} can be simplified into
\begin{equation}
    \sum_{j=1}^{J_\Omega} A(\phi_{j},\phi_{i})U_j = \widetilde{\widetilde{F}}(\phi_{i}) \quad  \mbox{for $i=1,\ldots,J_\Omega$},
    \label{eq:linear1.5}
\end{equation}
and the $J_{\Omega}$-dimensional right-hand side vector are now given by
\begin{equation}
    \begin{split}\label{eq:F1.5}
    \widetilde{\widetilde{F}}(\phi_{i}) = F(\phi_{i})-\sum_{j\in\{J_\Omega+1:J|\widetilde x_{j}\in\partial \Omega\}} A(\phi_{j},\phi_{i})g(\widetilde x_j).
    \end{split}
\end{equation}

We say $u_h$ by \eqref{eq:linear1.5} is our numerical solution without using ball approximation strategies, i.e., every inner integral over the $B_\delta(x)$ can be computed accurately. All the numerical experiments in Section \ref{sec:numerical} will use this discretization form \eqref{eq:linear1.5}.

\subsection{Error Estimate and Balls Approximation}
If the exact solution of nonlocal Poisson problem \eqref{eq:nonlocalproblem} is sufficiently smooth, {the convergence order of the numerical solution is of $\mathcal{O}(h^2)$ by} the result in \cite{d2021cookbook, D2019}. 
%
However this is hard to be guaranteed in practical implementations. In the assembly process, nonlocal problems usually encounter integrals of discontinuous functions over some elements \cite{d2021cookbook}. This may lead to the failure of the quadrature rule, and introduce additional errors to the linear system to be solved. For example, when the element satisfies $\mathcal{E}_i\cap\partial B_\delta(x)\neq\emptyset$, the inner integration $\sum_{i}\int_{\mathcal{E}_i\cap B_\delta(x)}g(y)\psi(x,y)dy$ cannot be computed with an error of $\mathcal{O}(h^2)$ by using the classical quadrature rules that have been performed well in solving local Poisson problems. Hence it may result in a loss of the convergence order to $\|u-u_h\|_{L^2(\Omega)}$. 
In \cite{d2021cookbook}, D'Elia et al. introduce a series of 2D ball approximation strategies, and show that some ball approximation strategies can maintain the optimal convergence order $\mathcal{O}(h^2)$ without seriously raising the computational cost in the assembly process. 
However, for the case of 3D or higher dimensions, the ball approximation strategies and related theoretical analysis have not been discussed. 
We will give some error estimates of the ball approximation strategies in this subsection, and {leave} the discussion about the algorithm and implementation of the ball approximation strategies in Section \ref{sec:implementation}.

Intuitively, the computation of the inner integration over $B_\delta(x)$ can be approximated by an integral over a polyhedral region $B_{\delta,h}(x)$ that approximates $B_\delta(x)$. The polyhedral region can be divided into several simplices, and the integrand on each simplex is continuous, so the integrations over these simplices can be computed by using the classical quadrature rules directly. In fact, we solve
\begin{equation}
    \hat{u}_h(x) =
  \sum_{j=1}^{J} \hat{U}_j \phi_j(x) = \sum_{j=1}^{J_\Omega} \hat{U}_j \phi_j(x) + \sum_{j=J_\Omega + 1}^J g(\widetilde x_j) \phi_j(x)
    \label{eq:hatu}
\end{equation}
by dealing with a {modification} of the weak formulation \eqref{eq:linear1.5}, {i.e.}:
\begin{equation}
    \sum_{j=1}^{J_\Omega} A_h(\phi_{j},\phi_{i})\hat{U}_j =\widetilde{\widetilde{F}}_h(\phi_{i}) := F_h(\phi_{i}) - \sum_{j=J_\Omega+1}^{J} A_h(\phi_{j},\phi_{i})g(\widetilde x_j)\quad \mbox{for $i=1,\ldots,J_\Omega$},
    \label{eq:modified_linear}
\end{equation}
where
\begin{equation}
\begin{aligned}\label{eq:Ah}
A_h(u,v) =& \int_{\Omega}\int_{\Omega\cap B_{\delta,h}(x)}
            (u(y)-u(x))(v(y)-v(x))\psi(x,y)\,dy\,dx
           \\& + 2\int_{\Omega}u(x)v(x)\bigg(\int_{\Omega_{\mathcal I}\cap B_{\delta,h}(x)}\psi(x,y)dy\bigg) \,dx
            \quad\forall\, u\in V^h,\,v\in V_c^h
\end{aligned}
\end{equation}
and
\begin{equation}\label{eq:Fh}
F_h(v) = \int_{\Omega}v(x) \bigg(f(x) +
2\int_{\Omega_{\mathcal I}\cap B_{\delta,h}(x)} g(y)\psi(x,y)dy
  \bigg) \,dx \quad\forall\, v\in V_c^h.
\end{equation}

By such polyhedral domain approximation, 
the numerical integration over each simplex can be computed by using classical quadrature rules without loss of accuracy. 
In \cite{d2021cookbook}, the ball approximation {strategies about} how to choose an appropriate $B_{\delta,h}(x)$ in 2D has been discussed in detail, including nocaps, barycenter, overlaps, approxcaps, exactcaps and shifted-center.


In \cite{d2021cookbook}, D'Elia et al. show that the geometric error, i.e. the {estimate of $L^2$-error} between $u_h$ and $\hat{u}_h$, can be bounded by the error of approximating the ball with a polytope. Du et al. \cite{Yin} point out that this error is also determined by the properties of the kernel function $\psi(x,y)$ defined on the $B_\delta(x)$. 
Before presenting the proposition, we denote $\Delta B_{\delta,h}(x)=(B_\delta(x)\setminus B_{\delta,h}(x))\cup(B_{\delta,h}(x)\setminus B_\delta(x))$,  $\forall\,x\in\Omega$ and $B'_{\delta,h}(x):=\big\{y\in\Omega|x\in B_{\delta,h}(y)\big\}$, $\Delta B'_{\delta,h}(x)=(B_\delta(x)\setminus B'_{\delta,h}(x))\cup(B'_{\delta,h}(x)\setminus B_\delta(x))$, $\forall\,x\in\Omega\cup\Omega_\mathcal{I}$,

\begin{proposition}[\cite{d2021cookbook}]\label{prop:error1}
Let $B_\delta(x)$ denote the $\ell^2$-ball in $n$D and $B_{\delta,h}(x)$ {is} its approximation, and let $u_h$ and $\hat{u}_h$ denote the corresponding finite element solutions obtained from \eqref{eq:u} and \eqref{eq:hatu}, respectively. {Assume the kernel function $\psi(x,y)$ satisfies \eqref{ass:kernel} and is integrable for all $y\in\Delta B_{\delta,h}(x)$ and $y\in\Delta B'_{\delta,h}(x)$. If all inner and outer integrals in \eqref{eq:linear1.5} and \eqref{eq:modified_linear} are exactly evaluated.} Then,
\begin{equation}\label{eq:energy-diff1}
\|u_h-\hat{u}_h \|_{L^2(\hat{\Omega})}\leq K \;\Big(\sup_{x\in\Omega}\,\big(
     \int_{\hat{\Omega}\cap\Delta B_{\delta,h}(x)} \psi(x,y)\,dy\big)+\sup_{x\in\hat{\Omega}}\,\big(
     \int_{\Omega\cap\Delta B'_{\delta,h}(x)} \psi(x,y)\,dy\big)\Big),
\end{equation}
where $K$ is a positive constant that depends on $\|f\|_{L^2(\Omega)}$ and $\|g\|_{L^2(\Omega_{\mathcal{I}})}$ but is independent of $\delta$ and $h$.
\end{proposition}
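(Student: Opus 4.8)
The plan is to treat this as a Strang-type perturbation estimate, exploiting that $u_h$ and $\hat u_h$ live in the same finite element space and share the same Dirichlet data on $\Omega_\mathcal{I}$. First I would note that $e_h := u_h - \hat u_h$ belongs to $V_c^h$: the coefficients $U_j = \hat U_j = g(\widetilde x_j)$ coincide for $\widetilde x_j \in \Omega_\mathcal{I}$, so $e_h$ vanishes on $\Omega_\mathcal{I}$ and hence $\|e_h\|_{L^2(\hat\Omega)} = \|e_h\|_{L^2(\Omega)}$, which also lets me test against $v=e_h$. Rewriting \eqref{eq:linear1.5} and \eqref{eq:modified_linear} in variational form as $A(u_h,v)=F(v)$ and $A_h(\hat u_h,v)=F_h(v)$ for all $v\in V_c^h$ (using that $A(\phi_j,\phi_i)=0$ whenever $\widetilde x_{j>J_\Omega}\notin\partial\Omega$), subtraction and insertion of $A_h(\hat u_h,\cdot)$ give the error identity $A(e_h,v) = \bigl(F(v)-F_h(v)\bigr) + (A_h-A)(\hat u_h,v)$ for every $v\in V_c^h$. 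Combined with the $L^2$-coercivity $A(e_h,e_h)\ge C_0\|e_h\|_{L^2(\Omega)}^2$, valid for the integrable kernel under \eqref{ass:kernel}, this reduces the proposition to bounding the two right-hand side contributions by $C(\sup_{x\in\Omega}S(x)+\sup_{x\in\hat\Omega}S'(x))\,\|e_h\|_{L^2}$, where I abbreviate $S(x):=\int_{\hat\Omega\cap\Delta B_{\delta,h}(x)}\psi(x,y)\,dy$ and $S'(x):=\int_{\Omega\cap\Delta B'_{\delta,h}(x)}\psi(x,y)\,dy$.

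Next I would bound each piece. Comparing \eqref{eq:A}, \eqref{eq:F} with \eqref{eq:Ah}, \eqref{eq:Fh}, every difference term carries the factor $\mathcal X_{B_\delta(x)}(y)-\mathcal X_{B_{\delta,h}(x)}(y)$, whose support is exactly $\Delta B_{\delta,h}(x)$; hence all inner integrals collapse onto $\hat\Omega\cap\Delta B_{\delta,h}(x)$. The penalty term of $A-A_h$ and the $F-F_h$ term are treated directly: Cauchy--Schwarz with the weight $\psi(x,y)\,dy$ extracts one factor $S(x)^{1/2}$ and leaves a weighted $L^2$-mass of $\hat u_h$, respectively of $g$. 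The nonlocal double integral in $A-A_h$ is split via $|w(y)-w(x)|^2\le 2|w(y)|^2+2|w(x)|^2$ and then estimated by Cauchy--Schwarz over the product measure $\psi(x,y)\,dy\,dx$. The decisive device, which is what makes the primed supremum $\sup_{x\in\hat\Omega}S'$ appear, is the measure-theoretic identity $\{x\in\Omega : y\in\Delta B_{\delta,h}(x)\}=\Omega\cap\Delta B'_{\delta,h}(y)$: indeed $y\in B_\delta(x)\iff x\in B_\delta(y)$ by symmetry of the Euclidean ball, and $y\in B_{\delta,h}(x)\iff x\in B'_{\delta,h}(y)$ by the very definition of $B'_{\delta,h}$, so both branches of the symmetric difference transform correctly. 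After a Fubini exchange and the symmetry $\psi(x,y)=\psi(y,x)$, the terms where the difference quotient is evaluated at $y$ convert to integrals controlled by $\sup_{x\in\hat\Omega}S'$, while those evaluated at $x$ are controlled by $\sup_{x\in\Omega}S$.

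Collecting these estimates yields $A(e_h,e_h)\le C\bigl(\|g\|_{L^2(\Omega_\mathcal{I})}+\|\hat u_h\|_{L^2(\Omega)}\bigr)\bigl(\sup_{x\in\Omega}S+\sup_{x\in\hat\Omega}S'\bigr)\|e_h\|_{L^2}$, where the mixed factors $(\sup S)^{1/2}(\sup S')^{1/2}$ produced by the $F-F_h$ term are absorbed into $\sup S+\sup S'$ by the arithmetic--geometric mean inequality. A stability bound $\|\hat u_h\|_{L^2(\Omega)}\le C'(\|f\|_{L^2(\Omega)}+\|g\|_{L^2(\Omega_\mathcal{I})})$, obtained from the uniform coercivity of $A_h$ against the boundedness of $F_h$ in \eqref{eq:Fh}, fixes the constant $K$ in terms of $\|f\|_{L^2(\Omega)}$ and $\|g\|_{L^2(\Omega_\mathcal{I})}$ only. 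Dividing by $\|e_h\|_{L^2}$ and invoking coercivity once more then delivers \eqref{eq:energy-diff1}.

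I expect the main obstacle to be two-fold. The genuinely delicate step is the geometric bookkeeping behind $\{x\in\Omega:y\in\Delta B_{\delta,h}(x)\}=\Omega\cap\Delta B'_{\delta,h}(y)$ together with the Fubini exchange, since it must track both inclusion branches of each symmetric difference exactly, rather than up to lower-order terms, and it is precisely this identity that produces the second supremum in \eqref{eq:energy-diff1}. The second, more structural, point is securing the coercivity of $A$ and the uniform coercivity of $A_h$ with constants independent of $h$ and $\delta$, as the statement asserts $K$ to be $\delta$- and $h$-independent; here I would rely on the nonlocal Poincar\'e inequality for integrable kernels rather than re-derive it.
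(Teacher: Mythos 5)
The paper itself does not prove Proposition \ref{prop:error1}; it imports the result from \cite{d2021cookbook}, and the argument there is precisely the Strang-type perturbation estimate you outline: the error equation for $e_h=u_h-\hat u_h\in V_c^h$, nonlocal Poincar\'e coercivity, and consistency bounds for $A-A_h$ and $F-F_h$ in which the two suprema arise exactly as you describe. Your proposal is correct and matches that route, including the decisive identity $\{x\in\Omega:\,y\in\Delta B_{\delta,h}(x)\}=\Omega\cap\Delta B'_{\delta,h}(y)$ (via symmetry of the Euclidean ball and the definition of $B'_{\delta,h}$) that produces the primed supremum after the Fubini exchange.
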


If we further assume $\psi(x,y)$ is a smooth kernel function satisfies $\psi(x,y):=C(n)\frac{\psi_0(|x-y|/\delta)}{\delta^{2+n}}$ like in \cite{d2021cookbook,Yin}, where $C(n)\in\mathbb{R}^+$ is a constant depending on $n$ to make $\int_{B_\delta(x)}\gamma(x,y)\cdot|x-y|^2 dy = n$, then the Proposition \ref{prop:error1} can be written in a simpler form as:
\begin{equation}\label{eq:energy-diff2}
\|u_h-\hat{u}_h \|_{L^2(\hat{\Omega})}\leq \frac{K}{\delta^{2+n}} \;\Big(\sup_{x\in\Omega}\,|\hat{\Omega}\cap\Delta B_{\delta,h}(x)|+\sup_{x\in\hat{\Omega}}\,|\Omega\cap\Delta B'_{\delta,h}(x)|\Big).
\end{equation}

The definition of shifted-center strategy discussed in \cite{d2021cookbook} can be {generalized} to $n$-dimension directly. This shifted-center strategy can be paired with any of the ball approximation strategies as in \cite{d2021cookbook}. For other strategies, the definition of intersection and barycenter approximation strategy can be generalized to higher dimensions quite naturally. However, the ball approximation strategies that use an inscribed polyhedral domain to approximate $B_\delta(x)$, such as the nocaps strategy and the approxcaps strategy, seem unable to be directly generalized from their 2-dimensional definition. 
In higher dimensions, putting all intersection cases between an element (simplex) and a ball into consideration is both theoretically and programmatically cumbersome, which brings great difficulties to the finite element implementation of {ball approximation strategies}.

In particular, we note that the ball approximation strategies that {generate an} inscribed polytope of $B_\delta(x)$ all employ an inscribed polytope $B_{\delta,h}(x)$ with edges' length $\mathcal{O}(h)$. We will later prove theoretically that this ``inscribed polytope approximation'' will not affect the convergence order of the solution when we use linear bases. 

In \cite{d2021cookbook}, it has been shown that the order of geometric error arising from the nocaps strategy is of $\mathcal{O}(h^2)$ in 2D by applying the proposition \ref{prop:error1}, and the barycenter strategy is of $\mathcal{O}(h^\alpha)$ for $\alpha\in[1,2]$. For higher dimensions, we can obtain similar estimates of the geometric error based on proposition \ref{prop:error1}, {but the key point is to estimate the error of approximating the ball with a polytope:}
\begin{theorem}\label{thm:normalapprox}
Let $B_\delta(x)$ denote the $n$-dimensional $\ell^2$-ball and $B_{\delta,h}(x)$ be  its  approximation. Assume $B_{\delta-h}(x)\subset B_{\delta,h}(x)\subset B_{\delta+h}(x)$ holds for all $x\in\Omega$, then:
$$
\sup_{x\in\Omega}\,|\hat{\Omega}\cap\Delta B_{\delta,h}(x)|+\sup_{x\in\hat{\Omega}}\,|\Omega\cap\Delta B'_{\delta,h}(x)|\leq\mathcal{O}(h\delta^{n-1}).
$$
\end{theorem}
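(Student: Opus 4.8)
The plan is to show that both symmetric-difference sets appearing in the bound are contained in the single spherical annulus $B_{\delta+h}(x)\setminus B_{\delta-h}(x)$, and then to estimate the volume of this annulus directly. This reduces the whole statement to one elementary volume computation, with the two supremum terms handled by the same geometric inclusion.

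First I would handle the term $|\hat{\Omega}\cap\Delta B_{\delta,h}(x)|$. The hypothesis $B_{\delta-h}(x)\subset B_{\delta,h}(x)\subset B_{\delta+h}(x)$ immediately forces the symmetric difference into the annulus: any point of $B_\delta(x)\setminus B_{\delta,h}(x)$ must lie outside $B_{\delta-h}(x)$, since otherwise it would already belong to $B_{\delta,h}(x)$, and any point of $B_{\delta,h}(x)\setminus B_\delta(x)$ lies inside $B_{\delta+h}(x)$. Hence $\Delta B_{\delta,h}(x)\subset B_{\delta+h}(x)\setminus B_{\delta-h}(x)$, and intersecting with $\hat{\Omega}$ only shrinks the set.

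Next I would treat the dual term $|\Omega\cap\Delta B'_{\delta,h}(x)|$, which is the more delicate step. The difficulty is that $B'_{\delta,h}(x)=\{y\in\Omega : x\in B_{\delta,h}(y)\}$ is defined through approximate balls centered at the variable point $y$, not at $x$, so the hypothesis cannot be applied to it directly. The key observation is that the exact Euclidean ball is symmetric, $x\in B_\delta(y)\iff y\in B_\delta(x)$, and so are the two sandwiching balls. Applying the hypothesis at the center $y$ then gives: if $y\in B'_{\delta,h}(x)$, then $x\in B_{\delta,h}(y)\subset B_{\delta+h}(y)$, so $|x-y|\le\delta+h$ and $y\in B_{\delta+h}(x)$; conversely, if $y\in\Omega$ with $|x-y|\le\delta-h$, then $x\in B_{\delta-h}(y)\subset B_{\delta,h}(y)$, whence $y\in B'_{\delta,h}(x)$. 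This yields the sandwich $B_{\delta-h}(x)\cap\Omega\subset B'_{\delta,h}(x)\subset B_{\delta+h}(x)\cap\Omega$, from which the same annulus containment $\Omega\cap\Delta B'_{\delta,h}(x)\subset B_{\delta+h}(x)\setminus B_{\delta-h}(x)$ follows exactly as in the first step.

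Finally, it remains to estimate the annulus volume, which is independent of $x$ by translation invariance. Writing $\omega_n$ for the volume of the unit $n$-ball, we have
\begin{equation*}
|B_{\delta+h}(x)\setminus B_{\delta-h}(x)|=\omega_n\big((\delta+h)^n-(\delta-h)^n\big).
\end{equation*}
By the mean value theorem applied to $t\mapsto t^n$, this equals $2h\,n\,\omega_n\,\xi^{n-1}$ for some $\xi\in(\delta-h,\delta+h)$, and the standing assumption $h<\delta/2$ bounds $\xi^{n-1}\le(\delta+h)^{n-1}\le(3\delta/2)^{n-1}=\mathcal{O}(\delta^{n-1})$, so each term is $\mathcal{O}(h\delta^{n-1})$; adding the two bounds gives the claim. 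I expect the volume estimate to be entirely routine, and the one step requiring genuine care is transferring the sandwiching hypothesis from $B_{\delta,h}$ to the dual set $B'_{\delta,h}$ through the symmetry of the Euclidean distance.
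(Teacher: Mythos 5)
Your proof is correct and follows essentially the same route as the paper's: both arguments sandwich $B_{\delta,h}(x)$ and $B'_{\delta,h}(x)$ between $B_{\delta-h}(x)$ and $B_{\delta+h}(x)$, conclude that both symmetric differences lie in the annulus $B_{\delta+h}(x)\setminus B_{\delta-h}(x)$, and bound its volume by $\mathcal{O}(h\delta^{n-1})$. The only difference is that you spell out the two steps the paper leaves implicit, namely the transfer of the sandwich hypothesis to the dual set $B'_{\delta,h}(x)$ via symmetry of the Euclidean distance, and the explicit annulus volume estimate via the mean value theorem.
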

\begin{proof}
It is easy to check that $\big(\Omega\cap B_{\delta-h}(x)\big)\subset B'_{\delta,h}(x)\subset \big(\Omega\cap B_{\delta+h}(x)\big)$ is also satisfied according to definition of $B'_{\delta,h}(x)$. As a result, $\Delta B_{\delta,h}(x)\subset B_{\delta+h}(x)\setminus B_{\delta-h}(x)$ for $\forall x\in\Omega$, and $\Omega\cap\Delta B'_{\delta,h}(x)\subset \Omega\cap\big(B_{\delta+h}(x)\setminus B_{\delta-h}(x)\big)$ for $\forall x\in\hat{\Omega}$. This can directly derive
$$|\Delta B_{\delta,h}(x)|+|\Omega\cap\Delta B'_{\delta,h}(x)|\leq2|B_{\delta+h}(x)|-2|B_{\delta-h}(x)|=\mathcal{O}(h\delta^{n-1}).$$
The proof is completed. \end{proof}

Theorem \ref{thm:normalapprox} also holds for barycenter, overlap, shifted-center and all the other polynomial approximation strategies considered in \cite{d2021cookbook} and this article, because these strategies all satisfy $\partial B_{\delta,h}(x)\subset\bigcup_{\mathcal{E}_i\cap\partial B_{\delta}(x)\neq\varnothing}\mathcal{E}_i$, and the edge of each simplex is $\mathcal{O}(h)$, which derives $B_{\delta-h}(x)\subset B_{\delta,h}(x)\subset B_{\delta+h}(x)$. The estimate of the order of geometric error for some of these strategies may not be optimal. Besides, for inscribed polyhedral approximations such as nocaps strategy and approxcaps strategy, we have a higher-order estimate:

\begin{theorem}\label{thm:inscribedapprox}
Let $B_\delta(x)$ denote the $n$-dimensional $\ell^2$-ball and $B_{\delta,h}(x)$ be an inscribed polyhedral approximation of $B_\delta(x)$. Assume the maximum edge length of $B_{\delta,h}(x)$ is less than $h$, and every face of this polytope is a $(n-1)$-dimensional simplex, then:
$$
\sup_{x\in\Omega}\,|\hat{\Omega}\cap\Delta B_{\delta,h}(x)|+\sup_{x\in\hat{\Omega}}\,|\Omega\cap\Delta B'_{\delta,h}(x)|\leq\mathcal{O}(h^2\delta^{n-2}) .
$$
\end{theorem}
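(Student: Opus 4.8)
The plan is to reduce both suprema to bounding the volume trapped between the ball and the inscribed polytope, and then to estimate that volume by a solid-angle decomposition in which each face contributes only its (small) spherical sagitta. Fix $x$ and translate so that $x=0$; write $P:=B_{\delta,h}(0)$ and $B_\delta:=B_\delta(0)$. Since $P$ is inscribed, $P\subset B_\delta$, so $B_{\delta,h}(x)\setminus B_\delta(x)=\varnothing$ and hence $\Delta B_{\delta,h}(x)=B_\delta\setminus P$. Because $P$ is a fine inscribed approximation of the ball, it is star-shaped with respect to its center $0$ (each face distance $d_i$ stays bounded away from $0$), which legitimizes passing to polar coordinates.

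First I would set up the decomposition. Let $F_1,\dots,F_N$ be the faces, $H_i$ the affine hyperplane of $F_i$, $d_i=\mathrm{dist}(0,H_i)$, and $s_i:=\delta-d_i$; for a direction $\omega\in\mathbb{S}^{n-1}$ let $\rho(\omega)$ be the radial distance from $0$ to $\partial P$, and let $\Sigma_i\subset\mathbb{S}^{n-1}$ collect the directions meeting $F_i$. Polar coordinates give
\[
|B_\delta\setminus P|=\int_{\mathbb{S}^{n-1}}\int_{\rho(\omega)}^{\delta} r^{n-1}\,dr\,d\omega=\frac{1}{n}\int_{\mathbb{S}^{n-1}}\big(\delta^n-\rho(\omega)^n\big)\,d\omega.
\]
Along any ray hitting $F_i$ the point lies on $H_i$, so $\rho(\omega)\ge d_i$ on $\Sigma_i$, and convexity of $t\mapsto t^n$ yields $\delta^n-d_i^n\le n\delta^{n-1}s_i$. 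Since the $\Sigma_i$ partition the sphere of directions, summing gives
\[
|B_\delta\setminus P|\le\frac{1}{n}\sum_{i}\int_{\Sigma_i} n\delta^{n-1}s_i\,d\omega=\delta^{n-1}\sum_i s_i\,|\Sigma_i|\le\delta^{n-1}\,|\mathbb{S}^{n-1}|\,\max_i s_i .
\]
Everything is thereby reduced to the per-face sagitta bound $\max_i s_i=\mathcal{O}(h^2/\delta)$.

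The per-face estimate is the heart of the matter, and the step I expect to be the main obstacle. Because the vertices of $F_i$ lie on $\partial B_\delta\cap H_i$, the circumsphere of the $(n-1)$-simplex $F_i$ is exactly that $(n-2)$-sphere, of radius $r_i=\sqrt{\delta^2-d_i^2}$; hence $s_i=\delta-\sqrt{\delta^2-r_i^2}\le r_i^2/\delta$, and it suffices to prove $r_i=\mathcal{O}(h)$. The edge-length bound alone does \emph{not} control the circumradius: a degenerate ``sliver'' face can have all edges $\le h$ yet circumradius comparable to $\delta$, with its plane near the center. What rules this out is that $F_i$ is a genuine face of a fine inscribed approximation, equivalently that each face is contained in a spherical cap of radius $\mathcal{O}(h)$; this holds under the usual shape-regularity (non-degeneracy) of the surface triangulation, and establishing it rigorously is the delicate point. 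Granting $r_i\le Ch$, we obtain $s_i\le C^2h^2/\delta$ and therefore $\sup_{x\in\Omega}|\hat{\Omega}\cap\Delta B_{\delta,h}(x)|\le\mathcal{O}(h^2\delta^{n-2})$.

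Finally I would dispatch the primed term by symmetry. Using the translation invariance assumed in \eqref{ass:kernel}, the approximation is translation-equivariant, $B_{\delta,h}(y)=y+P$, so $B'_{\delta,h}(x)=(x-P)\cap\Omega=(x+(-P))\cap\Omega$. Since $B_\delta$ is centrally symmetric, $-P$ is again an inscribed polytope approximation of $B_\delta$ with edges $\le h$ and $(n-1)$-simplex faces, so it satisfies the same hypotheses. Moreover $y\in B'_{\delta,h}(x)$ forces $x\in B_{\delta,h}(y)\subset B_\delta(y)$, i.e.\ $y\in B_\delta(x)$, so the outer part of $\Delta B'_{\delta,h}(x)$ is null and $\Delta B'_{\delta,h}(x)=B_\delta(x)\setminus(x-P)$. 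The identical argument then gives $|\Omega\cap\Delta B'_{\delta,h}(x)|\le|B_\delta(x)\setminus(x-P)|\le\mathcal{O}(h^2\delta^{n-2})$. Taking suprema over $x$ and adding the two contributions completes the proof.
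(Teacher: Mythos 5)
Your reduction to a per-face estimate is exactly where the proof breaks, and you have identified the crack yourself: the circumradius bound $r_i\leq Ch$ is simply not available under the stated hypotheses, and the theorem does not need it. Your own ``sliver'' example settles this: take $n$ nearly degenerate points on a great $(n-2)$-sphere of $\partial B_\delta(x)$, pairwise within distance $h$; this is a legitimate face under the hypotheses (edges $\leq h$, simplex face), yet its circumradius is comparable to $\delta$ and its hyperplane passes near the center, so your per-face sagitta $s_i=\delta-d_i$ is $\mathcal{O}(\delta)$, not $\mathcal{O}(h^2\delta^{-1})$. Your attempted repair --- ``each face is contained in a spherical cap of radius $\mathcal{O}(h)$, hence $r_i=\mathcal{O}(h)$'' --- is not an equivalence: since the vertices are pairwise within $h$, every face automatically has diameter $\leq h$ and therefore lies in such a cap, but the sliver shows this containment does not control the circumradius (the circumcenter lies far outside the face). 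Importing shape-regularity of the surface triangulation would prove a different theorem with an added hypothesis, one the nocaps construction of Definition \ref{def:nocaps} does not guarantee. The fix, which is the paper's entire proof (Lemma \ref{lem:nocaps}), is to lower-bound the radial distance to the \emph{face itself} rather than to its affine hyperplane: for $y=\sum_i\alpha_i v_i$ with $\alpha_i\geq 0$, $\sum_i\alpha_i=1$, $|v_i|=\delta$ and $|v_i-v_j|\leq h$, expand $|y|^2=\sum_{i,j}\alpha_i\alpha_j v_i^Tv_j$ and use $v_i^Tv_j=\delta^2-\frac12|v_i-v_j|^2\geq\delta^2-h^2/2$ to get $|y|\geq\delta-\frac14h^2\delta^{-1}$. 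This uses nothing beyond the hypotheses and neutralizes the sliver case: the volume trapped behind such a face is still $\mathcal{O}(h^2\delta^{n-2})$ because every point of the face stays within $\mathcal{O}(h^2\delta^{-1})$ of the sphere even though its hyperplane does not. Once this is known, $B_{\delta-c}(x)\subset B_{\delta,h}(x)\subset B_{\delta}(x)$ with $c=\mathcal{O}(h^2\delta^{-1})$, and the annulus-volume argument of Theorem \ref{thm:normalapprox} finishes. In fact your polar-coordinate decomposition would go through verbatim if on $\Sigma_i$ you lower-bounded $\rho(\omega)$ by the distance to the face (via the lemma) instead of by $d_i$.

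A second, smaller gap: your treatment of the primed term assumes $B_{\delta,h}(y)=y+P$, i.e.\ translation-equivariance of the approximation. The translation invariance in \eqref{ass:kernel} concerns the kernel $\psi$, not the polytope; for mesh-based strategies $B_{\delta,h}(y)$ depends on the local triangulation around $y$ and is not a translate of a fixed $P$. The assumption is also unnecessary: as in the proof of Theorem \ref{thm:normalapprox}, the pointwise inclusions $B_{\delta-c}(y)\subset B_{\delta,h}(y)\subset B_{\delta}(y)$ for all $y$ already give $\Omega\cap B_{\delta-c}(x)\subset B'_{\delta,h}(x)\subset\Omega\cap B_{\delta}(x)$, hence $\Omega\cap\Delta B'_{\delta,h}(x)\subset\Omega\cap\big(B_{\delta}(x)\setminus B_{\delta-c}(x)\big)$, whose volume is $\mathcal{O}(h^2\delta^{n-2})$.
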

\begin{proof}
Following the proof of theorem \ref{thm:normalapprox}, it is easy to find that we only need to prove $\delta-c\leq|y-x|\leq\delta$ holds for all $y\in \partial B_{\delta,h}(x)$, where $c=\mathcal{O}(h^2\delta^{-1})$.

The $|y-x|\leq\delta$ is definitely satisfied, as we consider inscribed polyhedral approximation here, and we only need to prove the left inequality. In fact, because every face of this polytope is a $(n-1)$-dimensional simplex, we state that if $y\in \partial B_{\delta,h}(x)$, then $y$ must be in the convex hull of some of this polynomial vertices $\{v_i\}_{i\in\mathcal{I}}$, which satisfies $|v_i-v_j|\leq h,\,\forall i,j\in\mathcal{I}$, because they are all in the same $(n-1)$-dimensional simplex. Without loss of generality, we assume $x$ is the original point. Thus the proof of this theorem is equivalent to the following Lemma \ref{lem:nocaps}.
\end{proof}
\begin{lemma} \label{lem:nocaps}
If $|v_i|=\delta,\,\forall i\in\mathcal{I}$, {and} $|v_i-v_j|\leq h,\,\forall i,j\in\mathcal{I}$. Then, for any point in the convex hull of $\{v_i\}_{i\in\mathcal{I}}$ (i.e. $y=\sum_{i\in\mathcal{I}}\alpha_iv_i,\,\sum_{i\in\mathcal{I}}\alpha_i=1$),  we have
$$
|y|>\delta-\mathcal{O}(h^2\delta^{-1}).
$$
\end{lemma}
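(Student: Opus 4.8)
The plan is to compute $|y|^2$ directly as a quadratic form in the barycentric coefficients $\alpha_i$ and to exploit the fact that all the $v_i$ share the same norm $\delta$. Writing $y=\sum_{i\in\mathcal{I}}\alpha_i v_i$ with $\alpha_i\geq 0$ and $\sum_{i\in\mathcal{I}}\alpha_i=1$, I would first expand
\[
|y|^2=\sum_{i,j\in\mathcal{I}}\alpha_i\alpha_j\langle v_i,v_j\rangle,
\]
and then rewrite each inner product by the polarization identity $\langle v_i,v_j\rangle=\tfrac12\big(|v_i|^2+|v_j|^2-|v_i-v_j|^2\big)=\delta^2-\tfrac12|v_i-v_j|^2$, which holds precisely because $|v_i|=|v_j|=\delta$.

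Substituting this and using $\sum_{i\in\mathcal{I}}\alpha_i=1$ (so that $\sum_{i,j}\alpha_i\alpha_j=1$) collapses the $\delta^2$ contributions and yields the key identity
\[
|y|^2=\delta^2-\frac12\sum_{i,j\in\mathcal{I}}\alpha_i\alpha_j|v_i-v_j|^2.
\]
Since the $\alpha_i$ are nonnegative and $|v_i-v_j|\leq h$ for all $i,j\in\mathcal{I}$, the subtracted double sum is nonnegative and bounded above by $\tfrac12 h^2\sum_{i,j}\alpha_i\alpha_j=\tfrac12 h^2$, giving the clean lower bound $|y|^2\geq\delta^2-\tfrac12 h^2$.

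It then remains to convert this estimate on $|y|^2$ into the stated bound on $|y|$. I would avoid any square-root expansion and instead use the difference-of-squares factorization $\delta-|y|=(\delta^2-|y|^2)/(\delta+|y|)$, bounding the numerator by $\tfrac12 h^2$ and the denominator from below by $\delta$ (using $|y|\geq 0$, or more safely $|y|\geq\sqrt{\delta^2-h^2/2}>0$ under the standing assumption $h<\delta/2$). This produces $\delta-|y|\leq h^2/(2\delta)$, that is $|y|\geq\delta-h^2/(2\delta)=\delta-\mathcal{O}(h^2\delta^{-1})$, with strict inequality whenever $h>0$.

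I do not anticipate a serious obstacle: the argument is elementary once the polarization identity is applied. The one point to watch is the nonnegativity of the $\alpha_i$, which is exactly what guarantees that every cross term $\alpha_i\alpha_j|v_i-v_j|^2$ is controlled by $h^2$ rather than contributing with the wrong sign; this is where membership in the \emph{convex} hull (as opposed to the affine hull) is essential. The step I would treat most carefully is the final passage from $|y|^2\geq\delta^2-h^2/2$ to the bound on $|y|$, since a naive linearization of $\sqrt{1-t}$ would require justification—the factorization above sidesteps this and keeps the constant $1/2$ in front of $h^2\delta^{-1}$ explicit.
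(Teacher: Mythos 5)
Your proof is correct and its core is the same as the paper's: expand $|y|^2$ as a quadratic form in the $\alpha_i$, use the identity $v_i^Tv_j=\delta^2-\tfrac12|v_i-v_j|^2$ (valid because all $v_i$ have norm $\delta$) to get $|y|^2\geq\delta^2-\tfrac12h^2$, and then convert this into a bound on $|y|$. The only divergence is in that last conversion, and there your version is actually the more careful one. The paper passes from $|y|>\sqrt{\delta^2-h^2/2}=\delta\sqrt{1-\tfrac{h^2}{2\delta^2}}$ to $|y|\geq\delta-\tfrac14h^2\delta^{-1}$ by ``Taylor's expansion,'' which amounts to asserting $\sqrt{1-t}\geq 1-t/2$; but that inequality goes the wrong way, since $(1-t/2)^2=1-t+t^2/4\geq 1-t$ shows $\sqrt{1-t}\leq 1-t/2$. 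The correct elementary bound is $\sqrt{1-t}\geq 1-t$ for $t\in[0,1]$, which yields the constant $\tfrac12$ rather than $\tfrac14$ --- exactly the constant your difference-of-squares factorization
\begin{equation*}
\delta-|y|=\frac{\delta^2-|y|^2}{\delta+|y|}\leq\frac{h^2}{2\delta}
\end{equation*}
delivers with no expansion to justify (and if $|y|>\delta$ the bound is trivial, so the sign of the numerator is not an issue). Since the lemma only claims $\mathcal{O}(h^2\delta^{-1})$, the paper's slip is harmless to the statement, but your final step is the one that survives scrutiny unchanged; your remark that nonnegativity of the $\alpha_i$ (membership in the convex hull, not merely the affine hull) is what controls the cross terms is also exactly right.
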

\begin{proof}
By the definition of $y$, we have
\begin{equation*}
|y|^2=y^Ty=(\sum_{i\in\mathcal{I}}\alpha_iv_i)^T(\sum_{i\in\mathcal{I}}\alpha_iv_i) =\sum_{i\in\mathcal{I}}\alpha_i^2\delta^2+\sum_{i\neq j\in\mathcal{I}}\alpha_i\alpha_jv_i^Tv_j.
\end{equation*}
Noticing $|v_i-v_j|\leq h$, we have
\begin{equation*}
v_i^Tv_j =-\frac{(v_i-v_j)^T(v_i-v_j)}{2}+\delta^2
\geq\delta^2-\frac{h^2}{2}.
\end{equation*}
Hence, we further have
\begin{equation}
\begin{aligned}
|y|^2&\geq\sum_{i\in\mathcal{I}}\alpha_i^2\delta^2+\sum_{i\neq j\in\mathcal{I}}\alpha_i\alpha_j(\delta^2-\frac{h^2}{2})>\delta^2-\frac{h^2}{2}.
\end{aligned}
\end{equation}
Finally, Taylor's expansion shows that
$$
|y|>\sqrt{\delta^2-\frac{h^2}{2}}=\delta\sqrt{1-\frac{h^2}{2\delta^2}}\geq\delta - \frac{1}{4}h^2\delta^{-1}.
$$
The proof is completed.
\end{proof}

{With the increase of dimension, the implementation difficulty and computational cost of ``polyhedral approximation'' of nocaps and approxcaps strategies increases faster than barycenter strategy. The nocaps strategy we use in this paper is simplified from the nocaps strategy defined in \cite{d2021cookbook}, but it is much more suitable for implementation in 3D and performs well in numerical experiments, as we will see in Section \ref{sec:numerical}. First} we present a strategy that can split the simplex satisfying $\mathcal{E}_k\cap B_\delta\neq\emptyset$ into smaller simplex so that we can build a polyhedral approximation of domain $\mathcal{E}_k\cap B_\delta$. 

\begin{definition}[simplex's dividing strategy]\label{def:simplexdiv}
For an $n$-{dimensional} simplex $\mathcal{E}_k$ with $n+1$ vertices $\{v_i\}_{i=1}^{n+1}$ and edge length less that $h$, and an exact $n$-{dimensional} open $\ell^2$-ball $B_\delta(x)$ with radius $\delta$, and center at $x$. We can subdivide $\mathcal{E}_k$ by $\partial B_\delta(x)$ in the following ways:
\begin{itemize}
\item If all vertices are in $B_\delta(x)$, we say $\mathcal{E}_k$ is {\bf inside} the $B_\delta(x)$. Note this means $\mathcal{E}_k\subset B_\delta(x)$;
\item If all vertices are not in $B_\delta(x)$, we say $\mathcal{E}_k$ is {\bf outside} the $B_\delta(x)$. Note this doesn't mean we have $\mathcal{E}_k\cap B_\delta(x)=\varnothing$, since $\mathbb{R}^n-B_\delta(x)$ is not a convex domain and $\mathcal{E}_k$ is a convex domain;
\item If there are $m$ vertices inside the $B_\delta(x)$, and $n+1-m$ vertices outside the $B_\delta(x)$, we subdivide the simplex in the following ways. For simplicity, we assume $V_p:=\{v_i\}_{i=1}^{m}\in B_\delta(x)$ and $V_q:=\{v_i\}_{i=m+1}^{n+1}\in\mathbb{R}^n\setminus B_\delta(x)$. Then for those $m\times(n+1-m)$ different line segments decided by $V_p\times V_q$, each of them  has and only has one intersection point with $\partial B_\delta(x)$, and we {denote} them as $\{p_i\}_{i=1}^{m(n+1-m)}$. Then, the convex hull of $\{p_i\}_{i=1}^{m(n+1-m)}\cup\{v_i\}_{i=1}^{m}$, which we denote as $\mathcal{E}_k^0$, satisfies $\mathcal{E}_k^0\subset B_\delta(x)\cap\mathcal{E}_k$. Obviously $\mathcal{E}_k^0$ is a polytope that can be divided into smaller $n$-dimensional simplices $\{\mathcal{E}_j^*\}_{j\in\mathcal{J}^x_k}$. 
\end{itemize}
\end{definition}

This kind of simplex dividing strategy only considers the relationship between simplex and ball through vertices and is already quite complex for implementation for dimensions higher than $3$. Regardless of its complexity in programming, we are now able to get an ``inscribed polytope approximation" of $B_\delta(x)$ by Definition \ref{def:simplexdiv}. It is time to present our nocaps ball approximation strategy as follows:

\begin{definition}[nocaps strategy]\label{def:nocaps}
Given a set of $n$D simplices $\{\mathcal{E}_k\}_{k\in\mathcal{K}}$ with $n+1$ vertices $\{v_i\}_{i=1}^{n+1}$ and edge length less that $h$, and an $n$-dimensional open $\ell^2$-ball $B_\delta(x)$ with radius $\delta$ and center at $x$. For each simplex $\mathcal{E}_k$ satisfing $\mathcal{E}_k\cap \partial B_\delta(x)\neq\varnothing$, we are able to get $\{\mathcal{E}_j^*\}_{j\in\mathcal{J}^x_k}$ by following the dividing strategy in Definition \ref{def:simplexdiv}. Now we can approximate $B_\delta(x)$ in the following way
$$
B_{\delta,h}(x):=\bigcup_{\mathcal{E}_k\subset B_\delta(x)}\mathcal{E}_k+\bigcup_{\mathcal{E}_k\cap \partial B_\delta(x)\neq \varnothing}{\bigcup_{j\in\mathcal{J}^x_k}\mathcal{E}_j^*}.
$$
\end{definition}
The $B_{\delta,h}(x)$ defined in nocaps strategy is convex in 2D, but usually not convex in higher dimensions, see the {examples of } illustrations in Figures \ref{fig:2dinscribe} and \ref{fig:3dinscribe}.

\begin{figure}[htp]
    \centering
    \subfigure[2D Barycenter]{
    \includegraphics[scale=0.15]{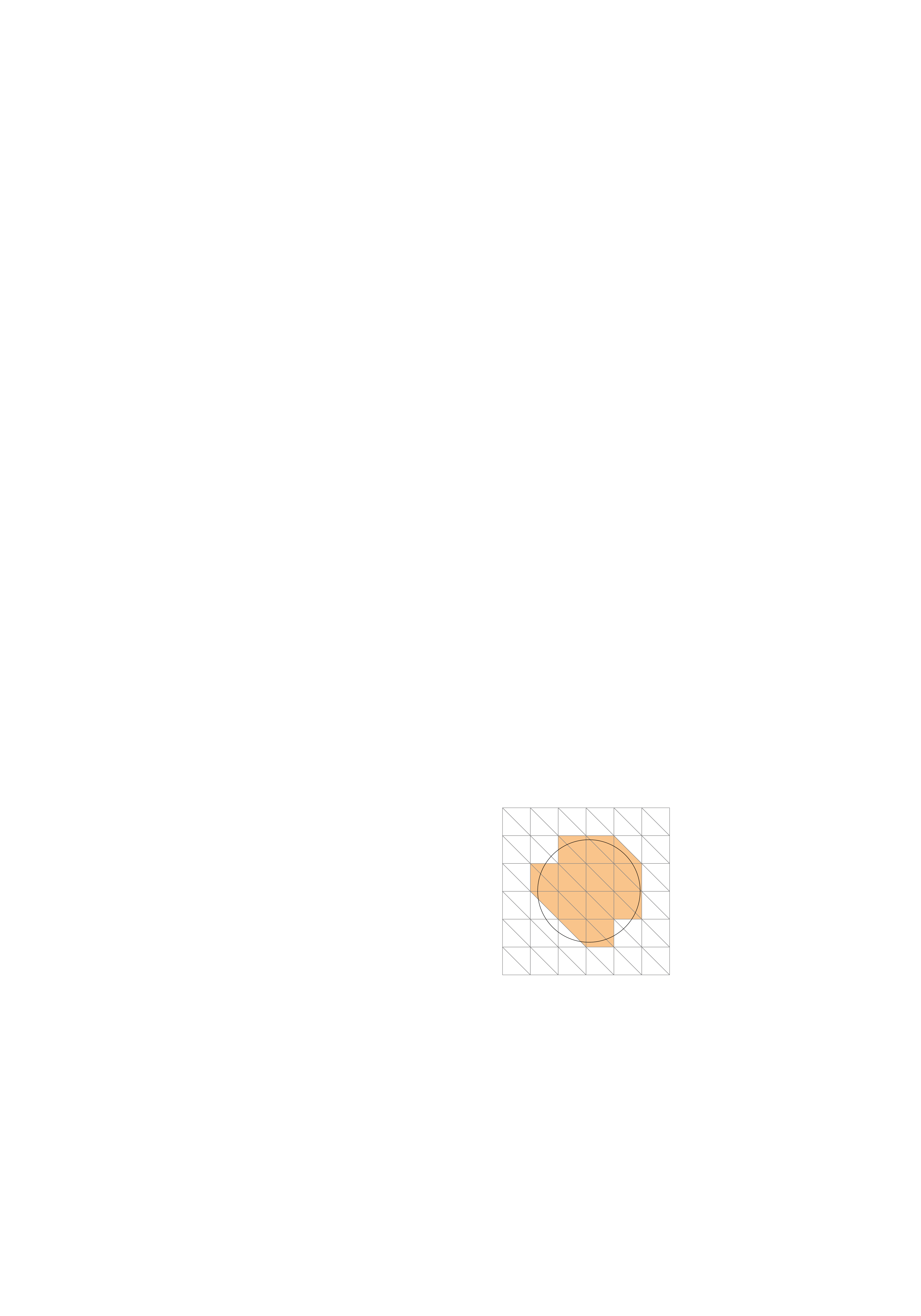}
    \label{fig:2dbary}
    }
    \subfigure[2D overlap]{
    \includegraphics[scale=0.15]{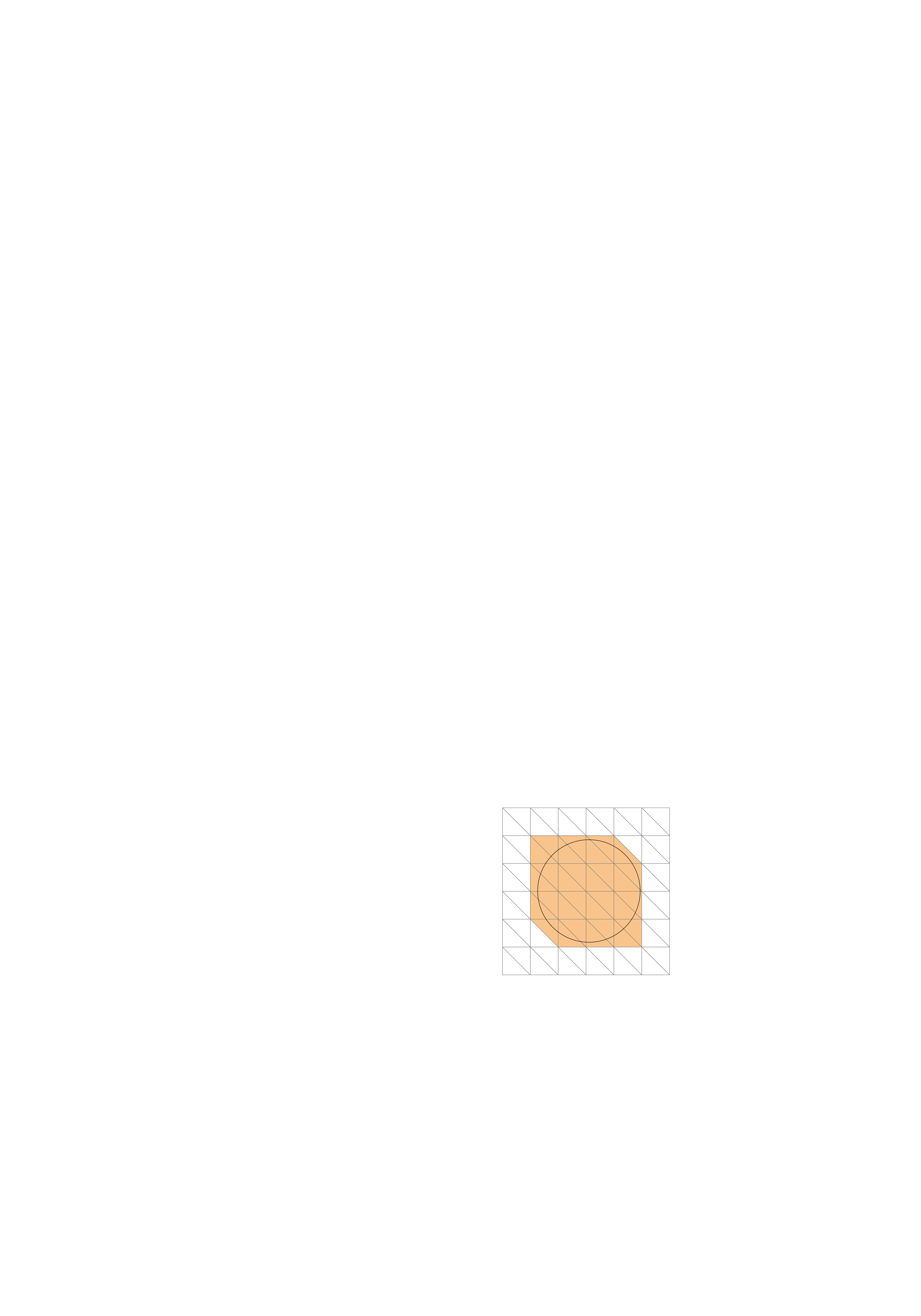}
    \label{fig:2doverlap}
    }
    \subfigure[2D nocaps]{
    \includegraphics[scale=0.3]{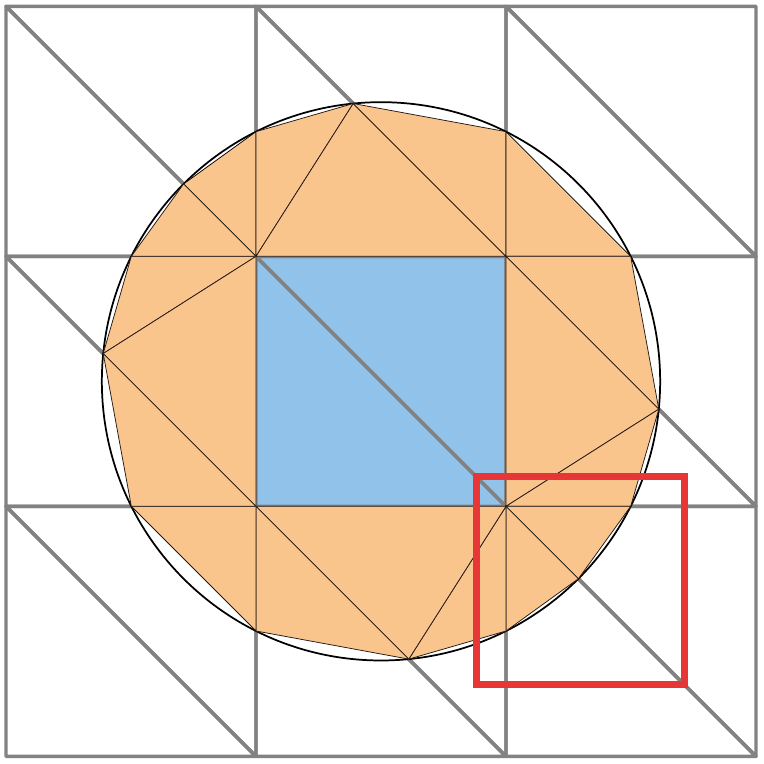}
    \label{fig:2dinscribe}
    }
    \subfigure[2D approxcaps]{
    \includegraphics[scale=0.3]{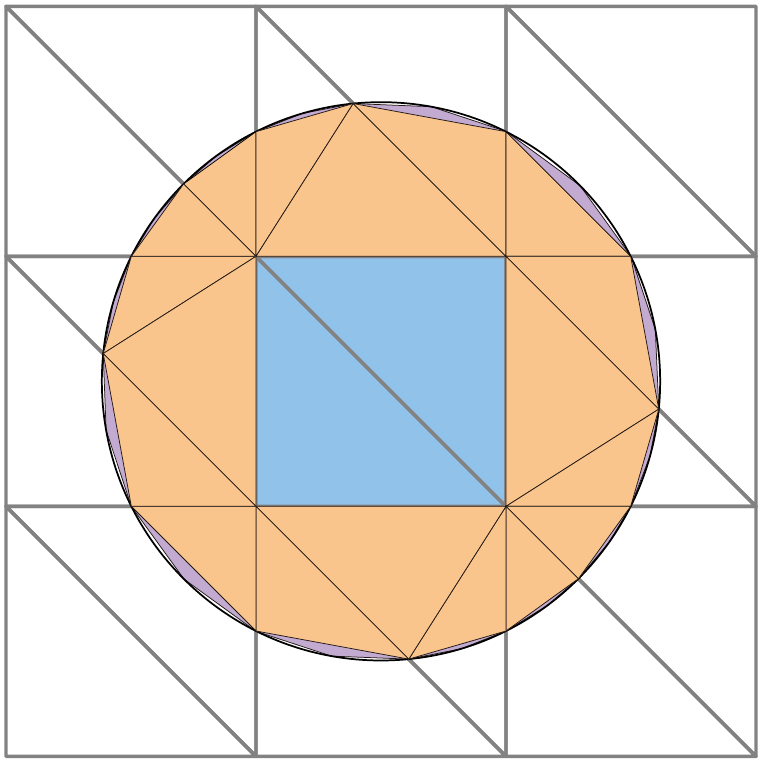}
    \label{fig:2dapproxcaps}
    }\\
    \centering
    \subfigure[3D Barycenter]{
    \includegraphics[height = 60pt, width = 60pt]{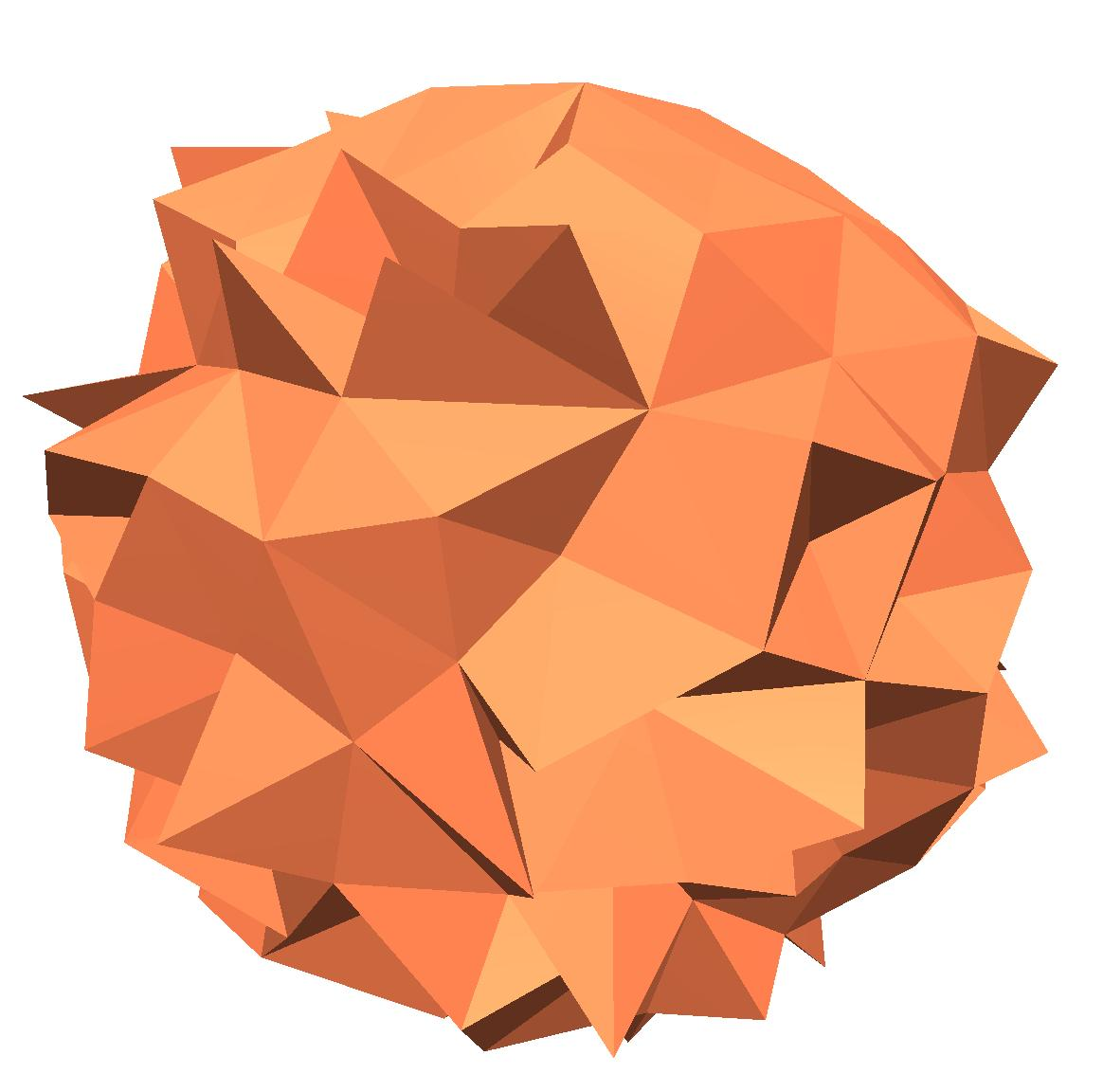}
    \label{fig:3dbary}
    }
    \subfigure[3D overlap]{
    \includegraphics[height = 60pt, width = 60pt]{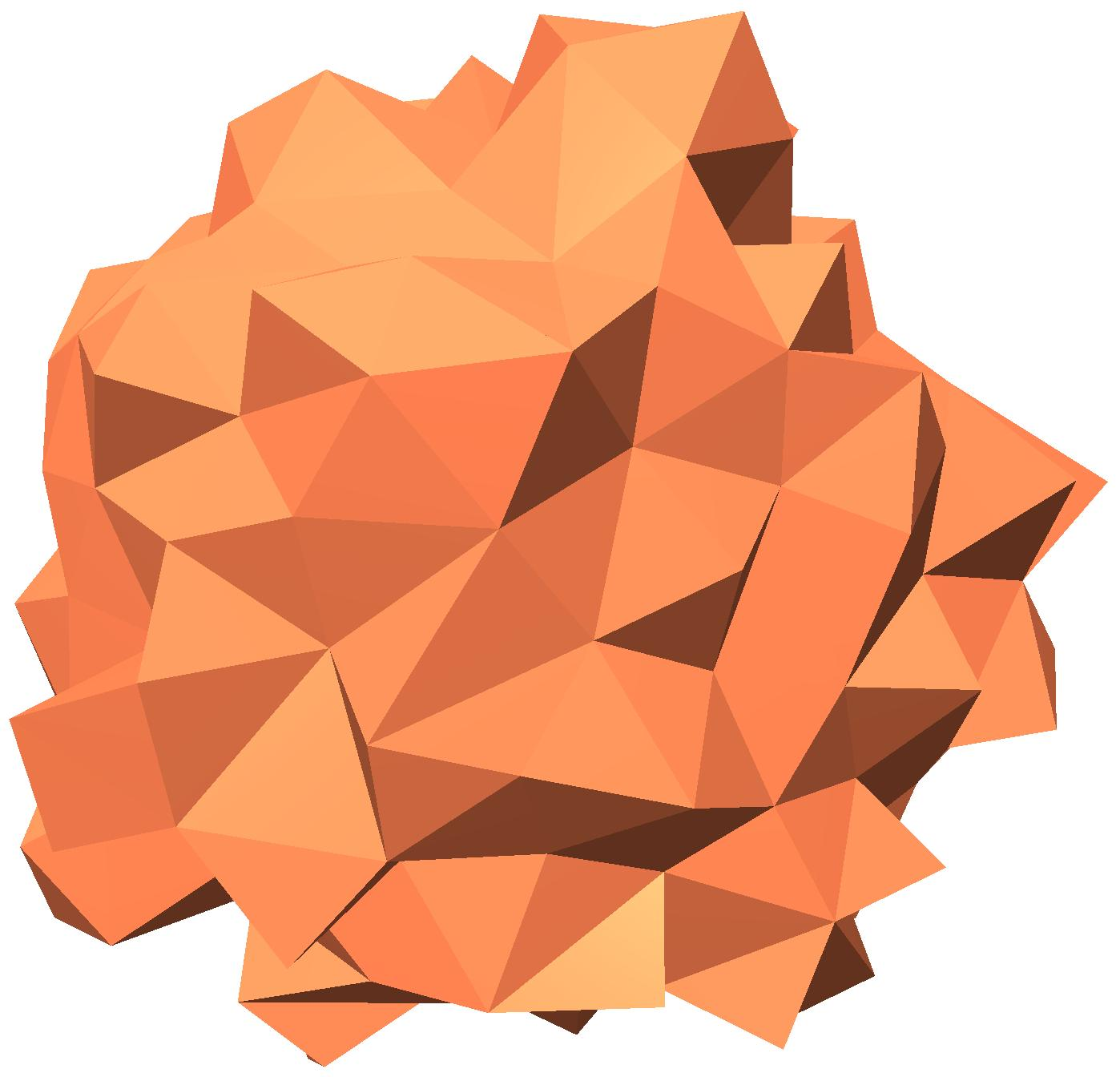}
    \label{fig:3doverlap}
    }
    \subfigure[3D nocaps]{
    \includegraphics[height = 60pt, width = 60pt]{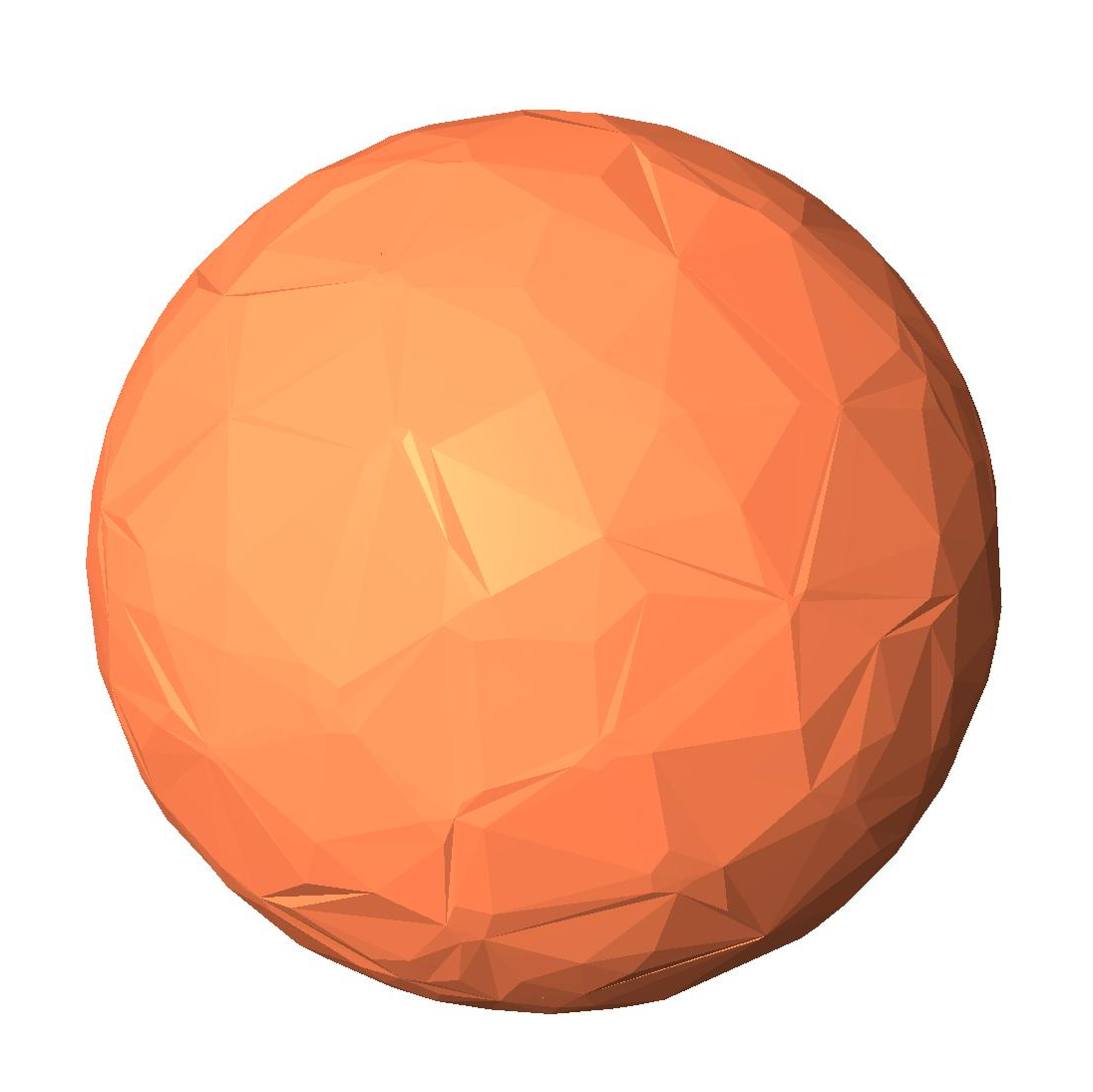}
    \label{fig:3dinscribe}
    }
    \subfigure[3D approxcaps]{
    \includegraphics[height = 60pt, width = 60pt]{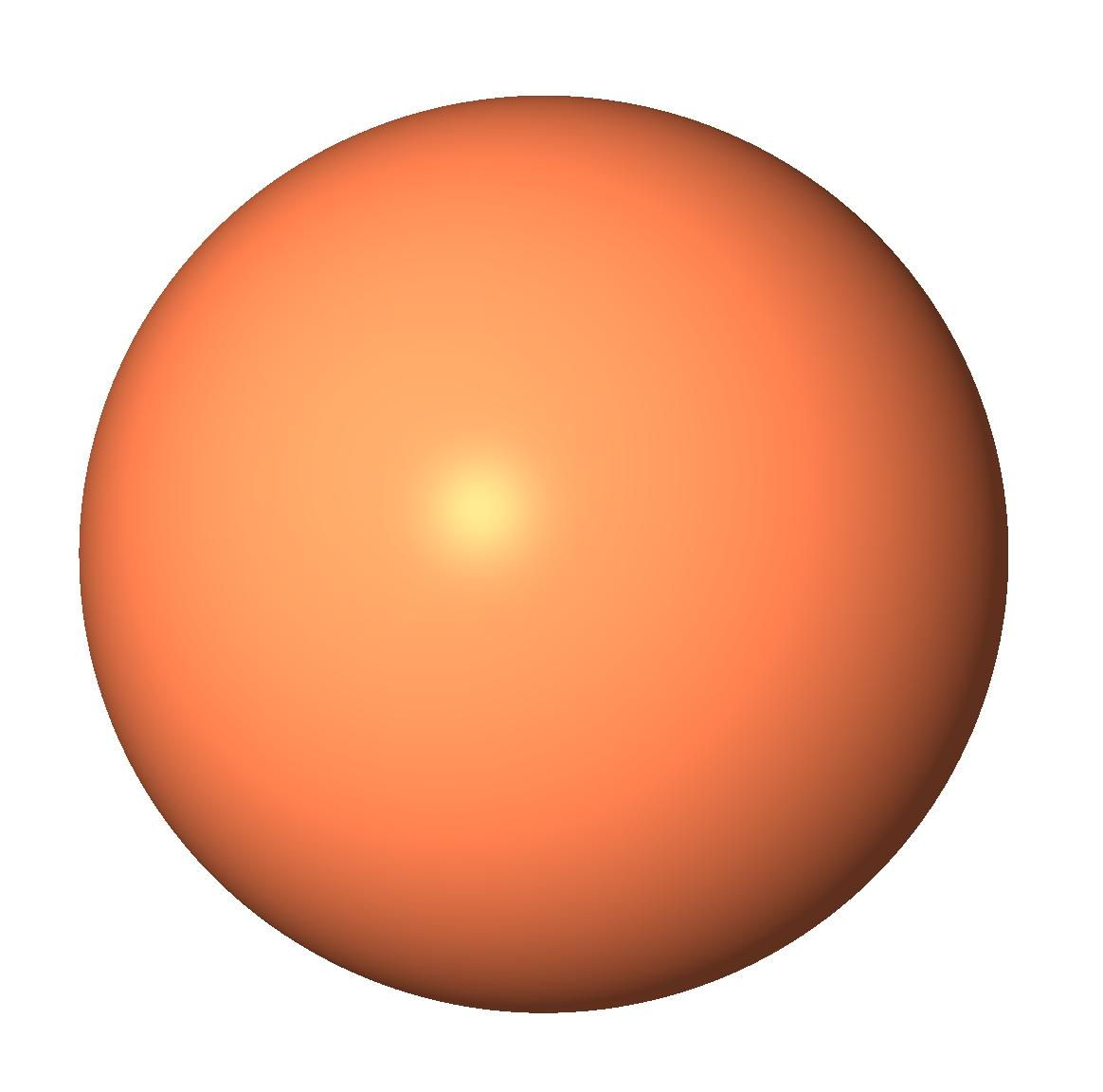}
    \label{fig:3dapproxcaps}
    }
    \caption{
    (a) (e): Approximation by elements of which the barycenter lies within the ball.
    (b) (f): Approximation by finite elements that intersect the ball.
    (c) (g): Approximation by an inscribed polytope without caps.
    (d) (h): Approximation by an inscribed polytope with subdivided caps. }
    \label{fig:approxball}
\end{figure}

Recalling the nocaps strategy of $n$-dimension we develop in Definition \ref{def:nocaps} and the approxcaps strategy, nocaps strategy of 2D in \cite{d2021cookbook}, it is obvious that the assumption in Theorem \ref{thm:inscribedapprox} are all satisfied. Because for each face $\mathcal{F}_i$ on $\partial B_{\delta,h}(x)$, there exists an element $\mathcal{E}_j$ such that $\mathcal{F}_i\subset\mathcal{E}_j$ with the diameter of $\mathcal{F}_i$ less than $h$. So, $\mathcal{F}_i$ is a $(n-1)$-dimensional simple polytope that can be subdivided into some $(n-1)$-dimensional {simplices}, with every simplex's edge length less than $h$. In short, $B_{\delta,h}(x)$ can be viewed as an $n$-dimensional polytope with every face of this polytope is an $(n-1)$-dimensional simplex with length less than $h$.

Similarly, we define a simplified 3D approxcaps strategy based on the nocaps strategy defined in Definition \ref{def:nocaps}. As we can see in Figure \ref{fig:3DcapsCases}, the relationship between a ball and the tetrahedron is complicated. We only consider the cases showed in Figure \ref{fig:caps1v}, \ref{fig:caps2v} and \ref{fig:caps3v}. By considering the convex hull of vertices of the yellow part and the midpoint of each curve of the blue part in Figure \ref{fig:caps1v}, \ref{fig:caps2v} and \ref{fig:caps3v}, we are able to construct a polytope that approximate $\mathcal{E}_k\cap \partial B_\delta(x)$ better. This approxcaps strategy is much more suitable for implementation in 3D compared to the approxcaps strategy in \cite{d2021cookbook}, and is already able to reduce the geometric error {of approximate ball} very well in the actual experiment.


\begin{figure}[htp!]
    \centering
    \subfigure[1 vertex]{
    \includegraphics[scale=0.035]{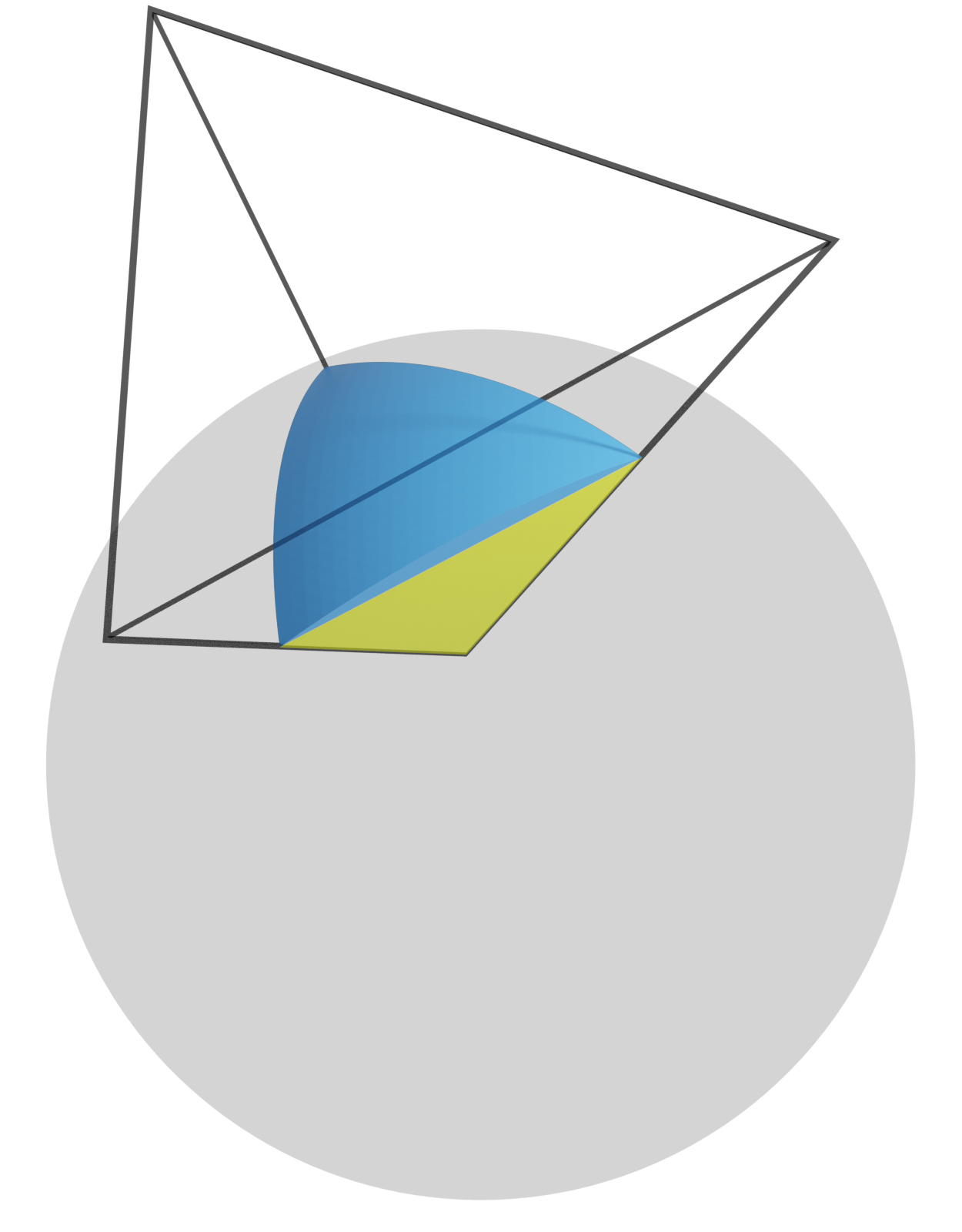}
    \label{fig:caps1v}
    }
    \subfigure[2 vertices]{
    \includegraphics[scale=0.035]{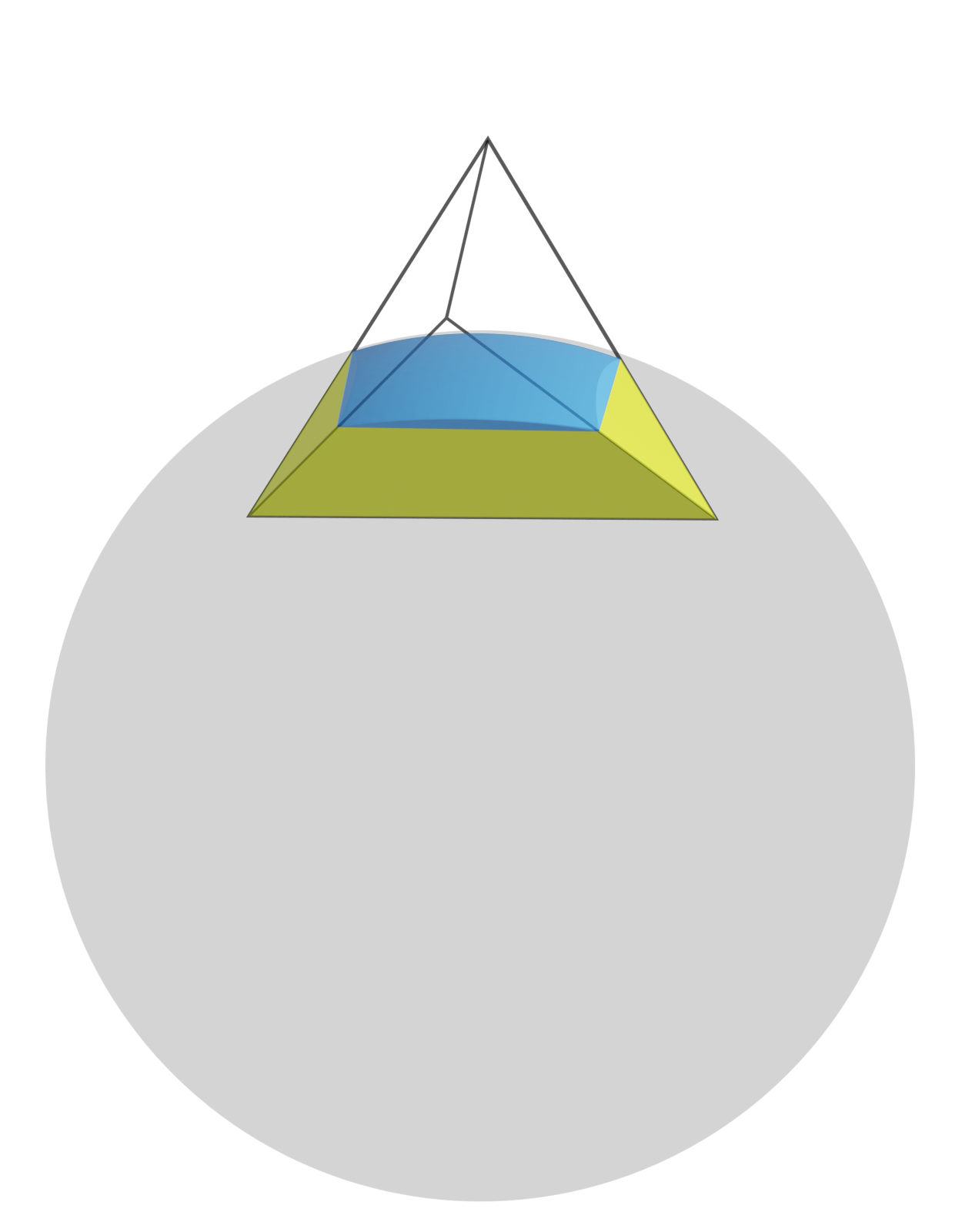}
    \label{fig:caps2v}
    }
    \subfigure[3 vertices]{
    \includegraphics[scale=0.035]{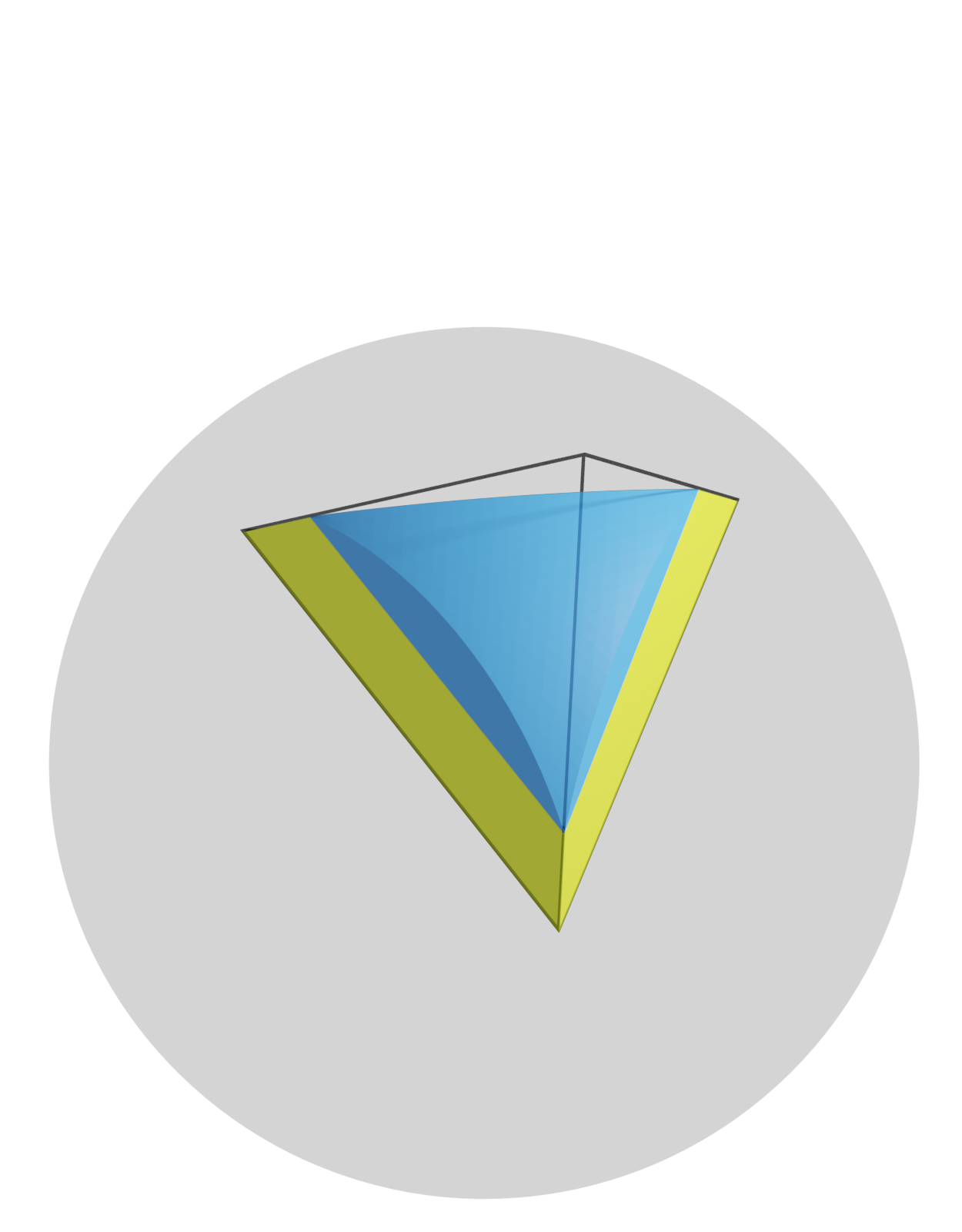}
    \label{fig:caps3v}
    }
    \subfigure[edge]{
    \includegraphics[scale=0.035]{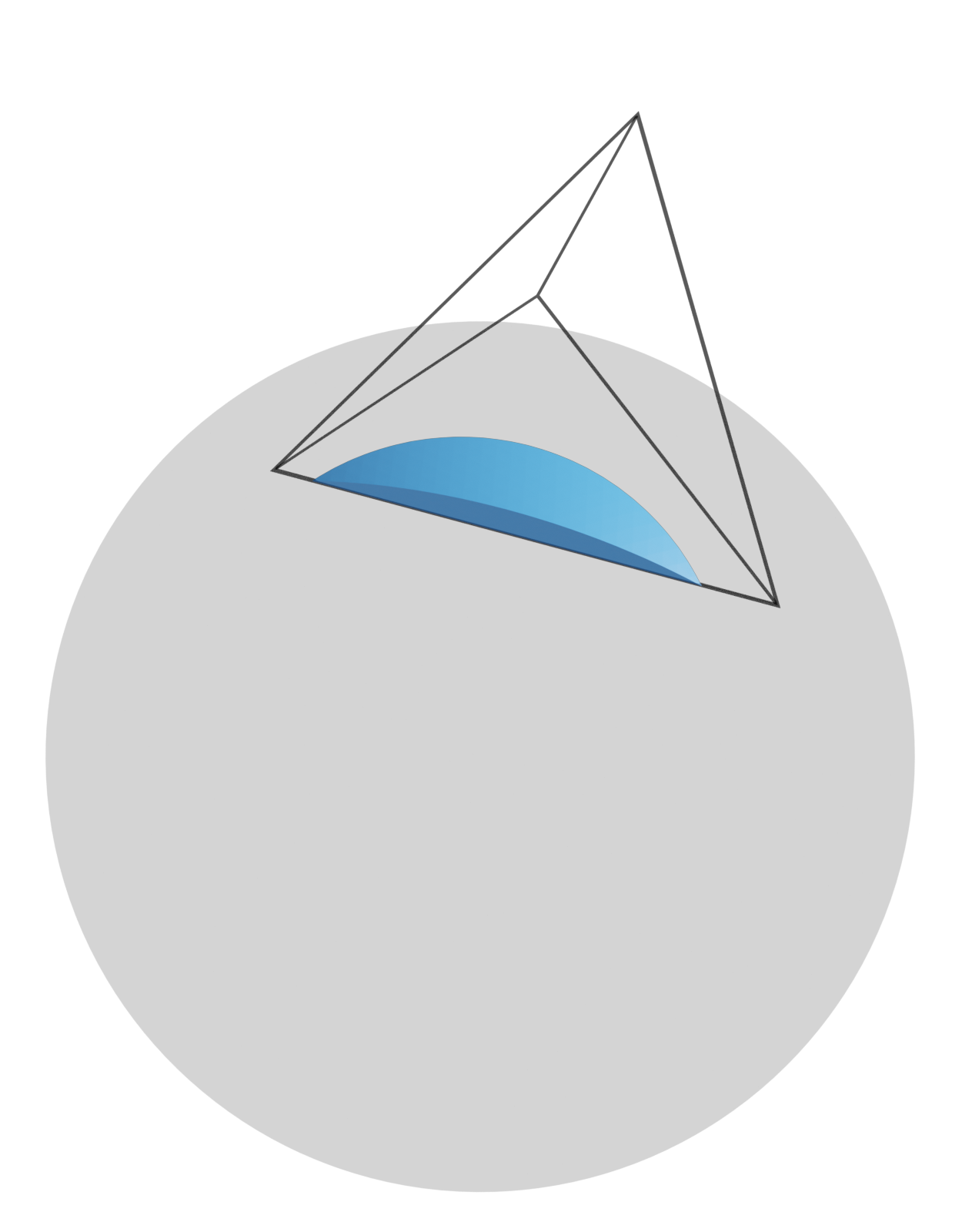}
    \label{fig:banana}
    }
    \subfigure[face]{
    \includegraphics[scale=0.035]{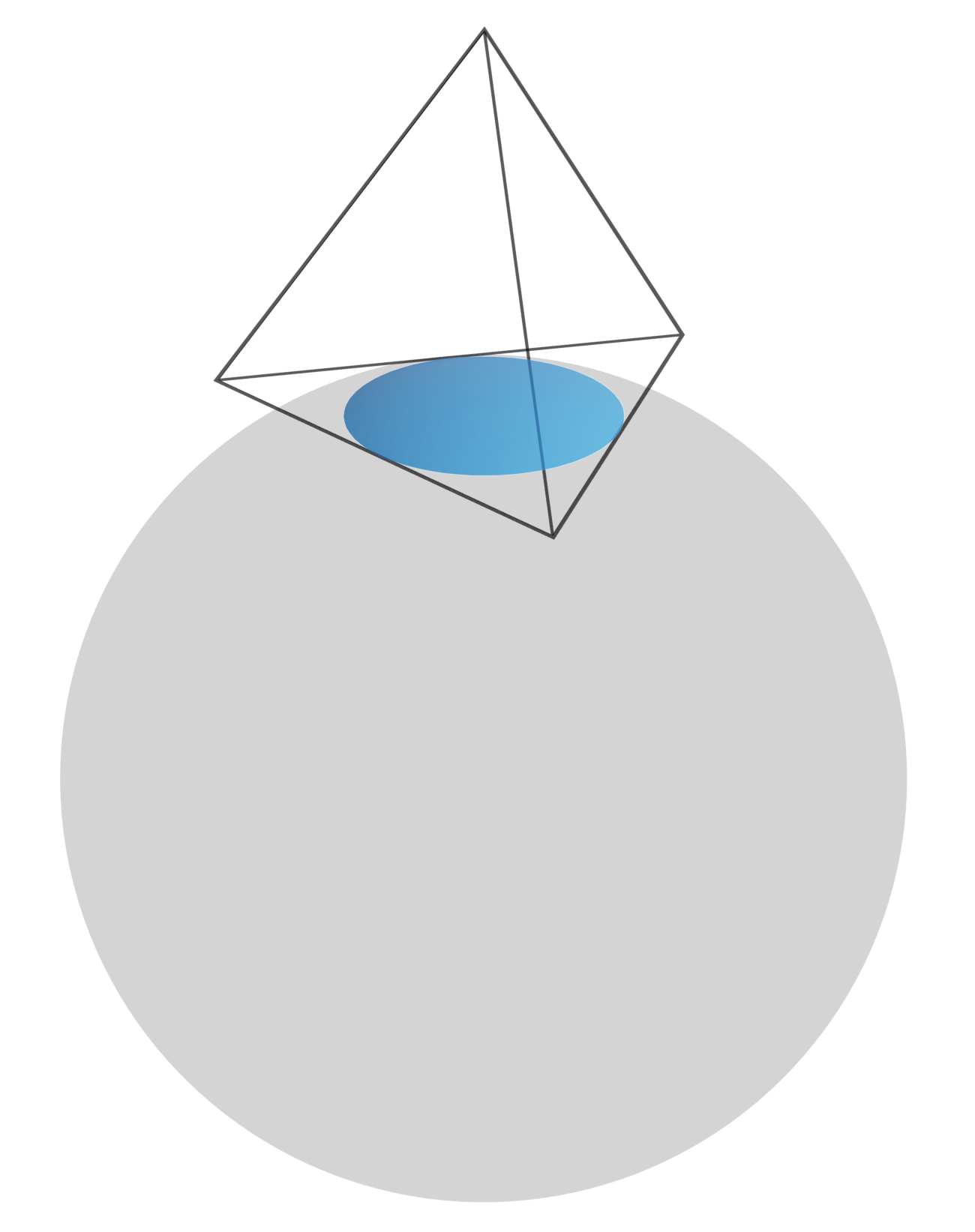}
    \label{fig:capscycle}
    }
    \subfigure[others]{
    \includegraphics[scale=0.035]{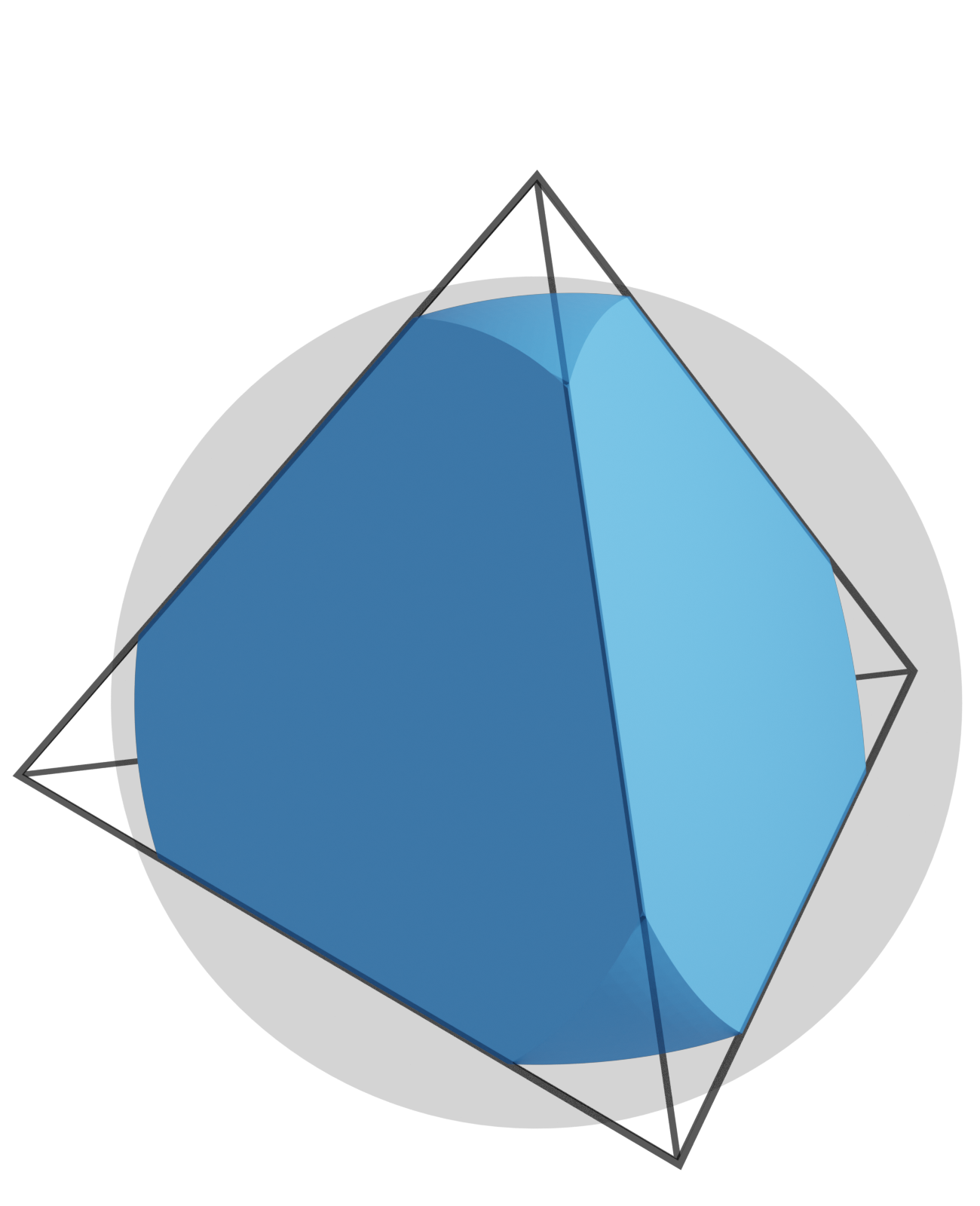}
    \label{fig:capsothercases}
    }
    \caption{six different cases for the tetrahedron (black line) and the ball (gray shadow). The fullcaps are colored by blue and the newly generated cells are colored by yellow. }
    \label{fig:3DcapsCases}
\end{figure}

\section{Implementation of Nonlocal FEM}\label{sec:implementation}

In this section, we first introduce the definition of combinatorial map theory rigorously. 
Then, we introduce the iterators designed for fast neighborhood queries and dynamic mesh modifications that are implemented in an Object-Oriented Approach. After that, we give a general interface for constructing the {polytope that approximates ball}. {The fullcaps strategy is also introduced.} Finally, we present a parallel assembly process of the nonlocal problem \eqref{eq:linear1.5}.

\subsection{Combinatorial Map}
The combinatorial map (C-map) is a mathematical model representing the topology of the subdivision of orientable objects, which is consistently defined in any dimension. The initial definition of the combinatorial map is given in \cite{cmap:lienhardt1991,cmap:lienhardt1994}, but it allows only to represent objects without boundaries. This definition was extended in \cite{cmap:2007,cmap:2010} to represent objects with boundaries, based on the concepts of partial permutations and partial involutions. 
First, we strictly introduce the theory of combinatorial mapping starting with the concept of ``dart".

\begin{definition}[dart/cell-tuple]
Consider a nD quasi-manifold $\mathcal{K}$, a cell-tuple is an ordered sequence of cells:
$$d:=([c^n],[c^{n-1}],\dots,[c^{1}], [c^{0}]),$$
where $[c^i]$ is an $i$-cell of $\mathcal{K}$, and the cell-tuple $d$ is defined in the order of decreasing dimensions such that $[c^{i-1}]\prec [c^{i}]$ for all $0 < i \leq n$. This cell-tuple is also referred to as ``dart''.
\end{definition}
For the sake of economy {of} expression, the mapping from dart $d$ to its $i$-dimensional cell is denoted by $C_i(d)$, and the mapping from cell $[c]$ to {one of} its darts is denoted by $D([c])$. {In the implementation of our FEM, this dart can be chosen by the user freely because we will only use $D(\cdot)$ as initialization in our algorithm.}

For these cell-tuples corresponding to $\mathcal{K}$, the two cell-tuples are said to be $i$-adjacent if they share all but the $i$-dimensional cell. 
In fact, we can define a set of $n+1$ mappings $\{\alpha_i(\cdot)\}_{i=0}^{n}$ called partial perturbations. Intuitively, we first denote $\epsilon$ as a null and $B$ as the finite set that contains all cell tuples corresponding to $\mathcal{K}$. The partial permutation $\alpha_i$ related to the quasi-manifold $\mathcal{K}$ is a map from $B\cup \{\epsilon\}$ to $B\cup \{\epsilon\}$, defined based on the $i$-adjacency relations of the cell-tuples:
\begin{itemize}
\item $\alpha_i(\epsilon)=\epsilon$;
\item $\forall\, d\in B, \alpha_i(d)=d'$ if there exists a $d'$ that is $i$-adjacent to $d$, otherwise $\alpha_i(d)=\epsilon$.
\end{itemize}
{These $\{\alpha_i(\cdot)\}_{i=0}^{n}$ are uniquely defined on $B$. For a given partial permutation $f$, the inverse of it is defined as:}
\begin{itemize}
\item $f^{-1}(\epsilon)=\epsilon$;
\item $\forall\, d\in B, f^{-1}(d)=d'$ if there exists a $d'$ that satisfy $f(d')=d$, otherwise $f(d)=\epsilon$.
\end{itemize}

In implementation, one can define $\epsilon$ as an empty pointer, and define a modified $\alpha_n$ in the following {way}:
$$
\dot{\alpha}_n(d)\left\{
\begin{aligned}
&= \alpha_n(d),\, &&\mbox{if $\alpha_n(d)\neq\epsilon$,}
\\
&= d,\, &&\mbox{if $\alpha_n(d)=\epsilon$,}
\end{aligned}
\right.
\qquad\mbox{for $d \in B$}.
$$
An example of using the modified $\alpha_n$ is shown in Figure \ref{fig:mapsa}, where a 2D geometric object is expressed by darts, and they interact with each other by $\alpha_0$, $\alpha_1$, and $\alpha_2$.

\begin{figure}[htp!]
    \centering
    \subfigure[mappings between darts]{
    \includegraphics[scale=0.35]{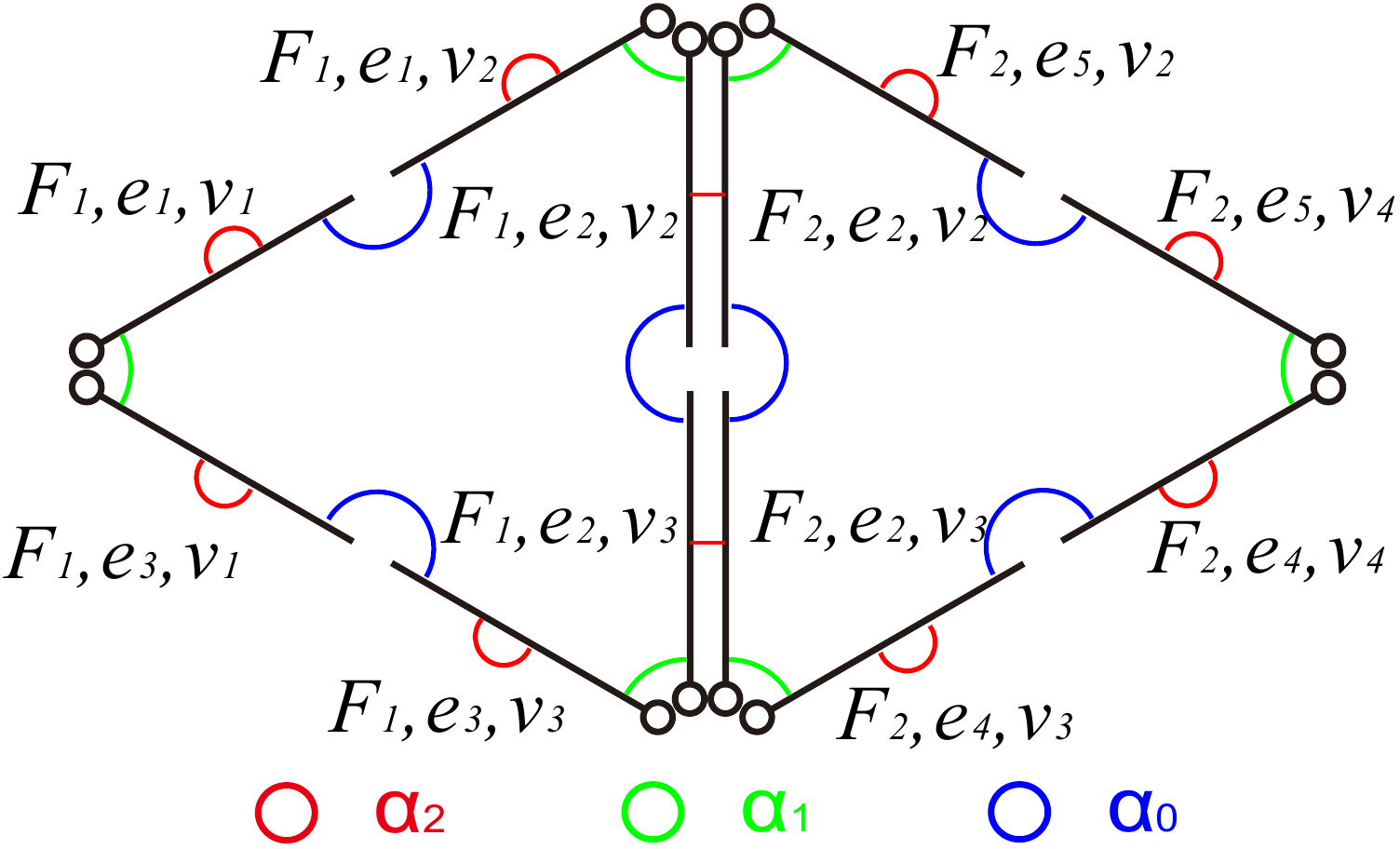}
    \label{fig:mapsa}
    }
    \subfigure[combinatorial map]{
    \includegraphics[scale=0.35]{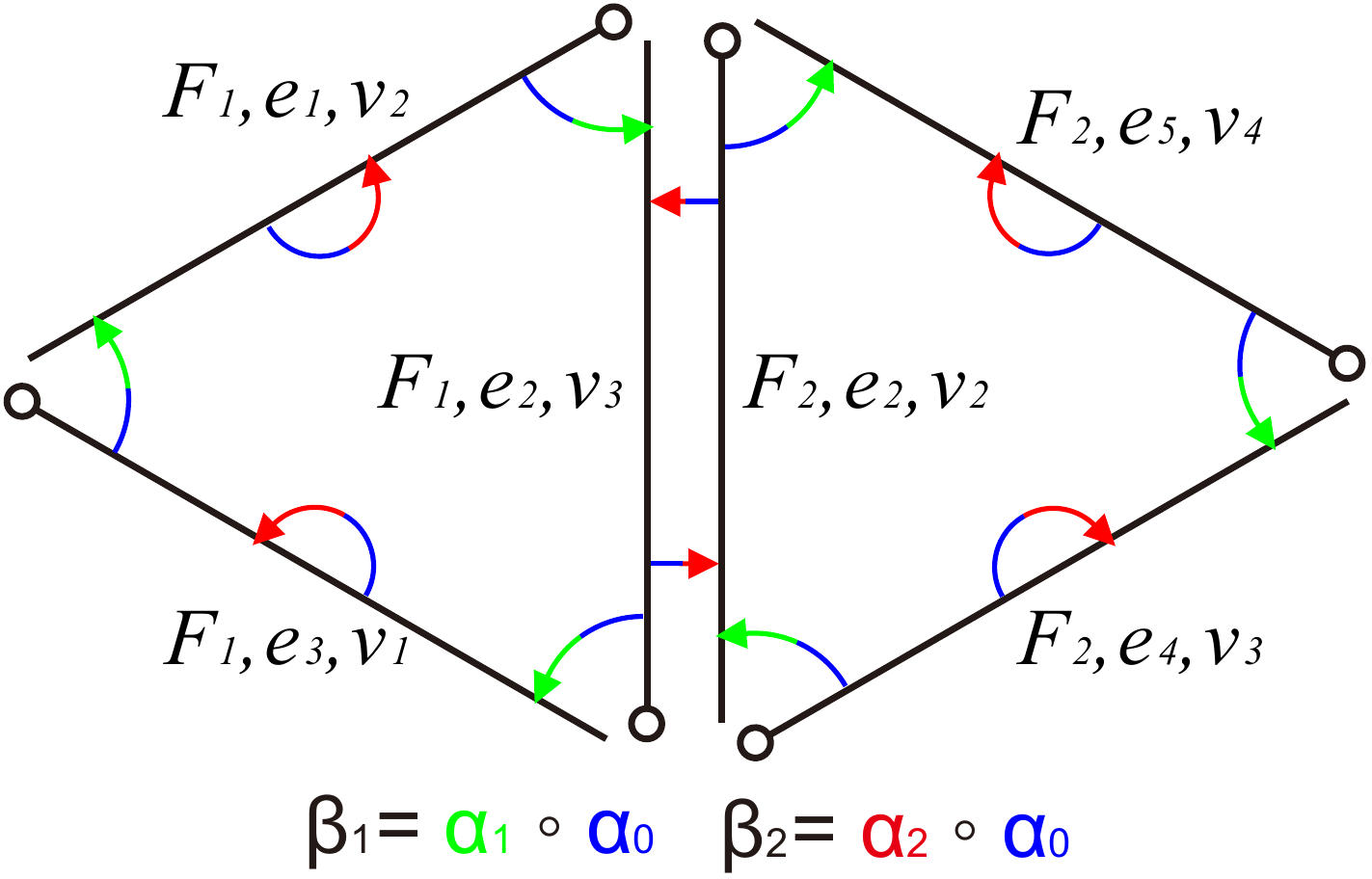}
    \label{fig:mapsb}
    }
    \caption{(a) shows $\alpha_0$, $\alpha_1$, $\alpha_2$ between the darts corresponding to a triangulation of a geometry by denoting a dart as (F,e,v). (b) shows the C-map, and mappings $\beta_1$, $\beta_2$. }
\end{figure}

We define the partial permutations
$$ \beta_i = \alpha_i \circ \alpha_0,\quad\forall\,0\leq i\leq n,$$
which connect two darts from different $0$-adjacent cell-tuples pairs.
Now we are able to give the definition of a combinatorial map in nD. The selected cell-tuples and the mapping $\beta_i$ can form an algebra called C-map. We have the following definition:
\begin{definition}[Combinatorial map]
When $n\geq2$, consider an orientable quasi-manifold $\mathcal{K}$. An $n$-dimensional C-map is an algebra $C=(R\cup\{\epsilon\},\beta_1,\cdots,\beta_n)$. Here $R$ is some cell-tuples that are selected by a given orientation of $\mathcal{K}$
\end{definition}
More details about the definition of $R$ can be found in \cite{cmap:2007,cmap:2010} or in our supplement. Similarly, we can define $\dot{\beta}_i$ by following a similar way of defining $\dot{\alpha}_i$ in the implementation. A 2D example of C-map is showed in Figure \ref{fig:mapsb}, and a 3D example of C-map is showed in Figure \ref{fig:3dmaps}.


\begin{figure}[htp!]
    \label{fig:3dmaps}
    \centering
    \subfigure[]{
    \includegraphics[scale=0.35]{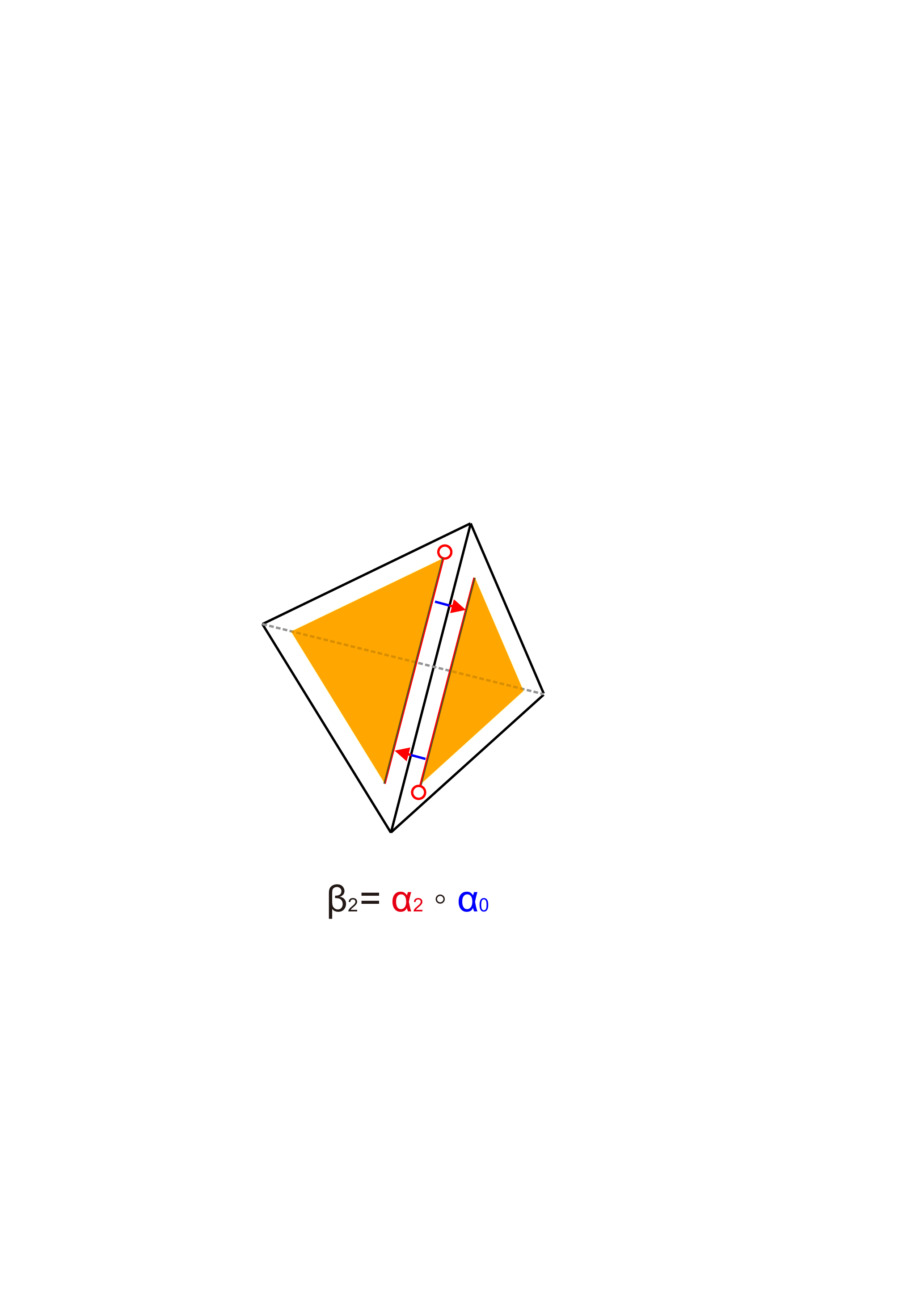}
    \label{fig:3dmapsa}
    }
    \subfigure[]{
    \includegraphics[scale=0.35]{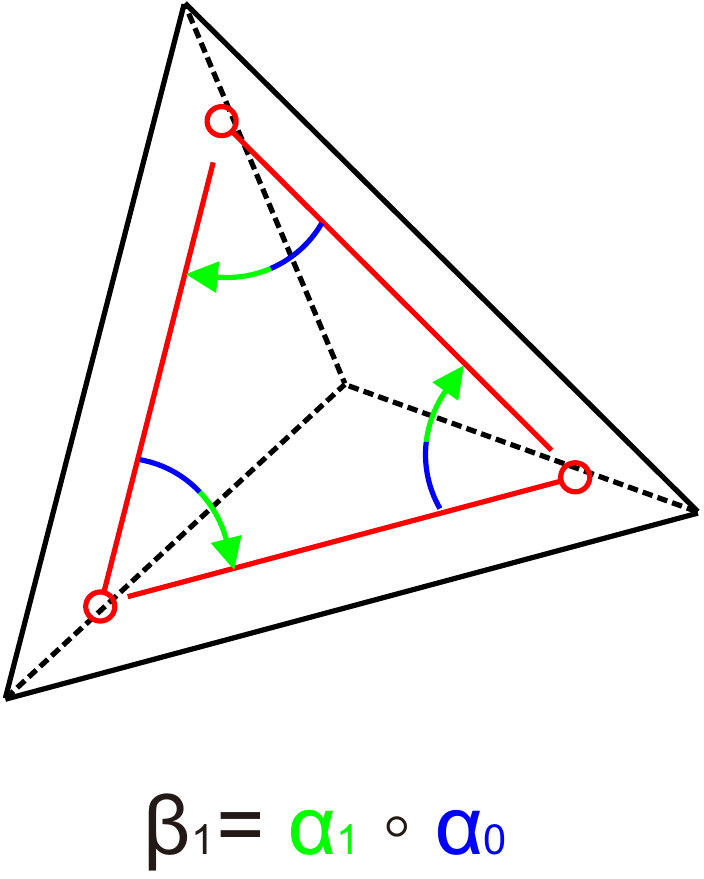}
    \label{fig:3dmapsb}
    }
    \subfigure[]{
    \includegraphics[scale=0.35]{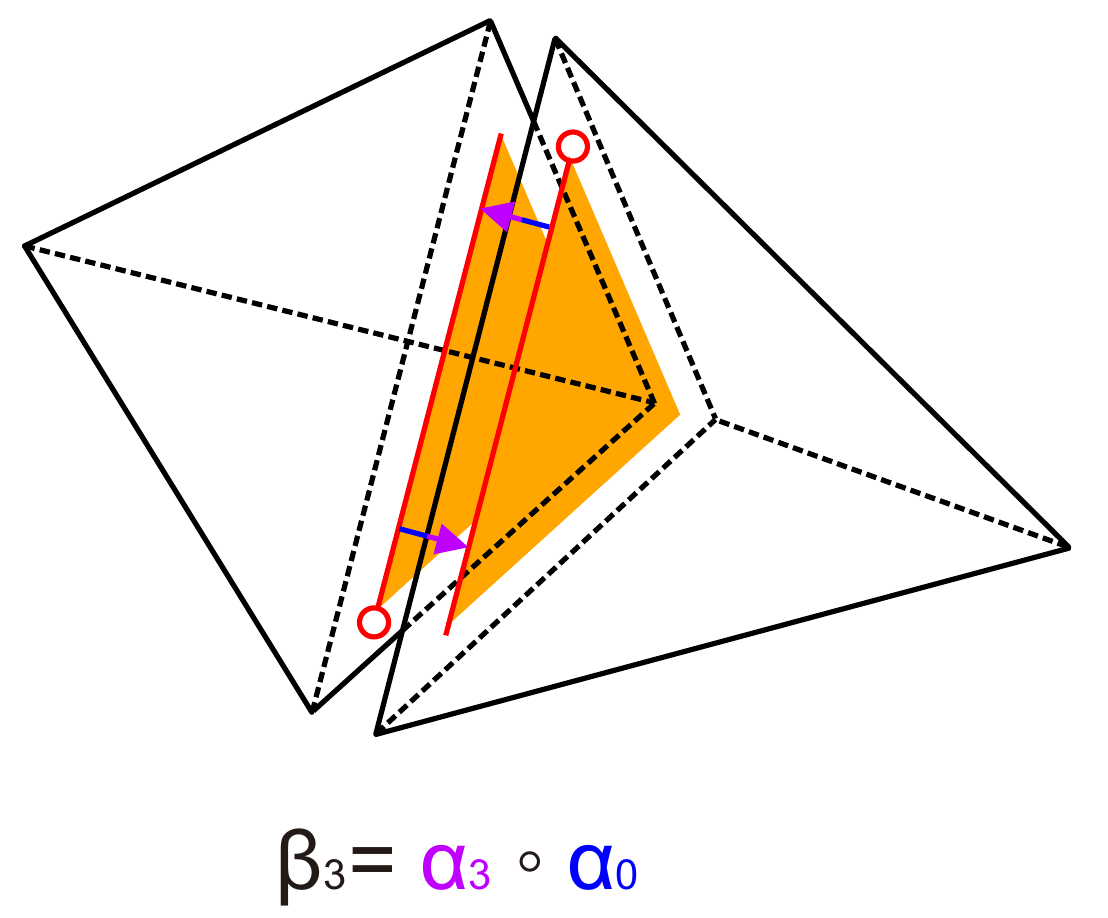}
    \label{fig:3dmapsc}
    }
    \caption{combinatorial map for an object consisting of two triangles. (a) the mapping $\beta_2$ associate two darts that have the common edge and volume but different faces. (b) the mapping $\beta_1$ associate two darts that have the common face and volume but different edges. (c) the mapping $\beta_3$ associate two darts that have the common face and edge but different volumes. }
\end{figure}

The concrete implementation of the C-map is achieved by following its mathematical definition and using the Object-Oriented Programming method. In an $n$-dimensional C-map, the cell is defined as an object with one pointer $D$ to dart and some attributes (such as coordinates of $0$-cell, material of this $0$-cell, etc.). The dart is defined to be an object with $n$ pointers $\beta_1, \beta_2, \dots, \beta _n$ and $n+1$ pointers $C_0, C_1, C_2,\dots,C_n$, where $\beta_i$ points to its $i$-adjacent dart, and $C_i$ points to its $i$-dimensional cell. The pointer $\beta_n$ is replaced by an empty pointer (i.e. null: $\epsilon$) if the dart is at the boundary of this quasi-manifold. For readers who want to use C-map to implement other meshes such as quadrilateral mesh in 2D, they may refer to \cite{cmap:2010}.

\subsubsection{Dynamical Mesh Modification}

C-map provides efficient tools to locate the darts {that} need to be modified. In this paper, we only care about how to use C-map to efficiently {generate a polytope $B_{\delta,h}(x)$ to approximate the ball $B_\delta(x)$}. The orbit defined as follows is used to efficiently accomplish this.

\begin{definition}[Orbit]\label{def:orbit}
Consider a given C-map $C=(R,\beta_1,\cdots,\beta_n)$, and a set of partial permutations $\{f_1,\cdots,f_k\}$ {defined on $R$}. {The set of darts that can be reached from $d$ through $f_i$, i.e. $\langle f_1,\cdots,f_k\rangle(d)\setminus\epsilon=\{f(d)|f\in\langle f_1,\cdots,f_k\rangle\}\setminus\epsilon$, is called the orbit of $d\in D$ related to $\{f_1,\cdots,f_k\}$.} Here $\langle f_1,\cdots,f_k\rangle$ is the group generated by $\{f_1,\cdots,f_k\}$.
\end{definition}

The {importance of defining orbit is to provide a tool that can efficiently find the darts associated with a given $i$-cell on the C-map}, for which we have the following theorem:
\begin{theorem}[\cite{cmap:2010}]\label{thm:orbit}
Assume $n\geq 2$. Consider a given $n$-dimensional orientable quasi-manifold $\mathcal{K}$ and its C-map $C=(R,\beta_1,\cdots,\beta_n)$. Let $d\in R$ be a dart, and $[c^i]$ is the $i$-cell of $d$. {If} the quasi-manifold $\mathcal{K}$ {satisfies} the following constraint:
\begin{itemize}
\item For any two $n$-cells $[c_1^n],[c^n]\in\mathcal{K}$, if there is an $p$-cell $[c^p]$ satisfying $[c^p]\prec[c_1^n],[c^n]$, then there is a series of $n$-cells and $(n-1)$-cells that are separated from each other:
    $$
    [c_1^n],[c_1^{n-1}],[c_2^n],[c_2^{n-1}],\cdots,[c_k^n],[c_k^{n-1}],[c^n]=[c_{k+1}^n],
    $$
    such that $[c_i^{n-1}]$ is the face of $[c_i^n]$ and $[c_{i+1}^n]$, and it satisfies $[c^p]\prec[c_i^{n-1}]$ for $1\leq i\leq k$.
\end{itemize}
Then it can be proved that
\begin{itemize}
\item $\{d'\in R|C_0(d')=[c^0]\}=\langle \{ \beta_i \circ \beta_j | \forall i,j: 1\leq i < j \leq n \} \rangle (d)\setminus\epsilon;$
\item $\{d'\in R|C_i(d')=[c^i]\}=\langle \beta_1,...,\beta_{i-1}, \beta_{i+1},...,\beta_n \rangle(d)\setminus\epsilon,\quad\forall\,1 \leq i \leq n.$
\end{itemize}
\end{theorem}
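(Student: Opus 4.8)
The plan is to prove each of the two set identities by establishing the two inclusions separately: \emph{soundness} (the orbit is contained in the set of darts sharing the prescribed cell) and \emph{completeness} (every dart sharing that cell is reached by the orbit). Throughout I would work in the cell-tuple picture, where a dart is a maximal incidence flag, each $\alpha_k$ is the (partial) involution exchanging only the $k$-cell of a flag while fixing all others, and $\beta_k=\alpha_k\circ\alpha_0$.

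Soundness is a direct bookkeeping argument. For the second identity, fix $1\le i\le n$; each generator $\beta_j$ with $j\ne i$ equals $\alpha_j\circ\alpha_0$, which alters only the $0$-cell and the $j$-cell of a flag. Since $i\ge1$ and $j\ne i$, neither $\alpha_0$ nor $\alpha_j$ touches the $i$-cell, so $C_i$ is invariant under every generator, hence under the whole group, giving $\langle\beta_1,\dots,\beta_{i-1},\beta_{i+1},\dots,\beta_n\rangle(d)\setminus\epsilon\subseteq\{d':C_i(d')=[c^i]\}$. For the first identity I would track the $0$-cell through the four applications $\alpha_0,\alpha_j,\alpha_0,\alpha_i$ composing $\beta_i\circ\beta_j$ (with $1\le i<j$, so $j\ge2$): the inner $\alpha_j$ leaves the $1$-cell untouched because $j\ge2$, so the second $\alpha_0$ flips the $0$-cell back along the same edge to $[c^0]$, and the outer $\alpha_i$ leaves it fixed; hence each $\beta_i\circ\beta_j$ fixes $C_0$, yielding the containment $\subseteq$.

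Completeness is the crux, and this is where the connectivity hypothesis enters. Given two darts $d,d'$ with the same $i$-cell $[c^i]$, I would decompose an incident dart into its \emph{boundary flag} lying in $\partial[c^i]$ and its \emph{co-boundary flag} recording the tower $[c^{i+1}]\prec\cdots\prec[c^n]$ above $[c^i]$. The generators with index $j<i$ move inside the boundary, itself a lower-dimensional quasi-manifold, so transitivity on that part would follow by induction on the dimension (the theorem applied to $\partial[c^i]$). The generators with index $j>i$ move inside the link of $[c^i]$, and here the stated constraint with $p=i$ — any two $n$-cells sharing $[c^i]$ are joined by an alternating chain of $n$- and $(n-1)$-cells all containing $[c^i]$ — is precisely the connectivity that lets me convert a topological path around $[c^i]$ into a word in the $\beta_j$, $j>i$. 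Combining transitivity on the boundary flag with transitivity on the co-boundary flag gives $\{d':C_i(d')=[c^i]\}\subseteq\langle\beta_1,\dots,\beta_{i-1},\beta_{i+1},\dots,\beta_n\rangle(d)$. For the $0$-cell identity there is no boundary flag, but each single $\beta_j$ moves the vertex; the even products $\beta_i\circ\beta_j$ generate exactly the orientation-preserving moves fixing $[c^0]$, and the hypothesis with $p=0$ (connectivity of the star of the vertex) supplies transitivity.

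The main obstacle I anticipate is making the connectivity hypothesis do its work honestly: turning the combinatorial chain of cells it provides into an explicit generator word while controlling the coupling that $\alpha_0$ injects into every $\beta_j$, so that adjusting the co-boundary flag does not silently disturb the boundary flag, and conversely. Care is also needed to stay within the oriented selection $R$ at every step — each $\alpha_k$ reverses orientation, so only the even compositions hidden inside the $\beta$-words keep the flags admissible — and to arrange the dimensional induction with correct base cases alongside the boundary/link product structure.
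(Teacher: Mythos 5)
First, a point of reference: the paper does not prove this theorem at all --- it is imported verbatim from the cited combinatorial-map literature (\cite{cmap:2010}), so there is no in-paper proof to compare against. I can only assess your proposal on its own terms.

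Your soundness direction is correct and essentially complete: for $i\geq 1$ each generator $\beta_j=\alpha_j\circ\alpha_0$ with $j\neq i$ alters only the $0$-cell and the $j$-cell, and your four-step tracking of $\beta_i\circ\beta_j$ (using that $\alpha_j$ with $j\geq 2$ fixes both the vertex and the edge, so the second $\alpha_0$ toggles the vertex back) correctly shows the even products fix $C_0$. The genuine gap is in the completeness half, and it sits exactly where you flag ``the main obstacle.'' Your plan rests on the decomposition ``generators with index $j<i$ move the boundary flag, generators with index $j>i$ move the co-boundary flag,'' and the second half of that claim is false: $\beta_j=\alpha_j\circ\alpha_0$ for $j>i$ also moves the $0$-cell, which lives in the boundary flag. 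So the product structure on which your dimensional induction and your use of the connectivity hypothesis are built does not exist at the level of the $\beta$-generators; acknowledging the coupling is not the same as controlling it, and as written the argument does not close.

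The standard resolution is to work one level down, with the cell-tuple involutions $\alpha_0,\dots,\alpha_n$ themselves, where the decoupling is genuine: $\alpha_j$ for $j<i$ (including $\alpha_0$) acts only on the part of the flag below $[c^i]$, and $\alpha_j$ for $j>i$ acts only on the part above. There your boundary/link induction plus the connectivity hypothesis (with $p=i$) proves that the darts incident to $[c^i]$ form a single orbit under $\langle\alpha_j : j\neq i\rangle$. One then descends to the C-map by a parity argument: every word in the $\beta$'s is an even word in the $\alpha$'s and conversely (via $\alpha_a\alpha_b=(\alpha_a\alpha_0)(\alpha_0\alpha_b)$, together with the quasi-manifold commutation relations $\alpha_a\alpha_b=\alpha_b\alpha_a$ for $|a-b|\geq 2$, which for the vertex case give $\beta_i\circ\beta_j=\alpha_i\circ\alpha_j$ when $i<j$), and orientability makes even words preserve the selected class $R$ while odd words map $R$ off itself; intersecting the $\alpha$-orbit with $R$ then yields exactly the two stated $\beta$-orbits. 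Without this detour --- or an equivalent explicit word-rewriting argument using those commutation relations, which your proposal never invokes --- the completeness inclusion, which is the entire content of the theorem, remains unproven. A secondary loose end of the same kind: your induction on the boundary $\partial[c^i]$ needs a verification that it inherits the quasi-manifold structure and the connectivity hypothesis, plus explicit base cases.
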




\begin{figure}[htp]
    \centering

    \subfigure[vertex orbit]{
    \includegraphics[scale=0.3]{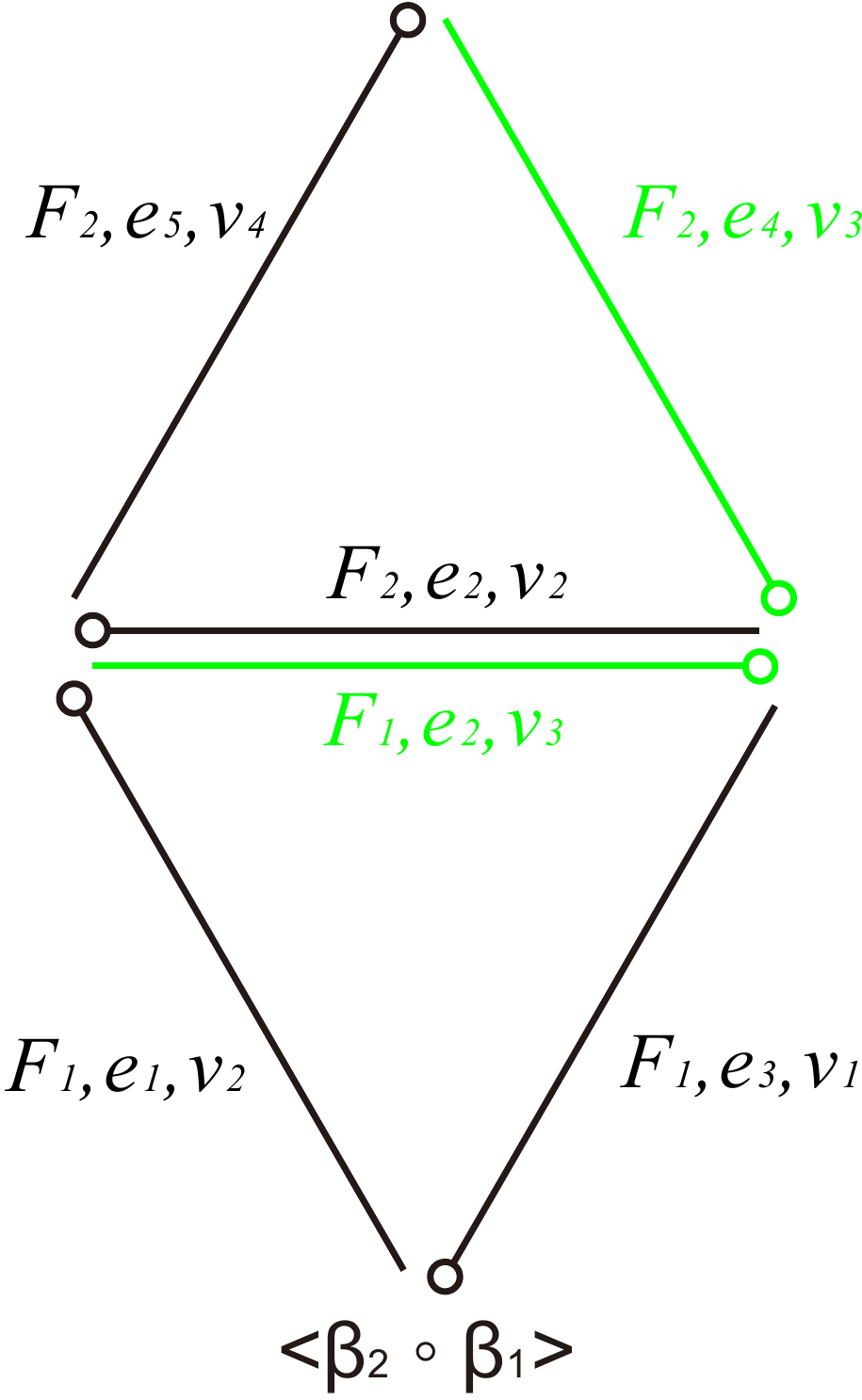}
    \label{fig:vertex}
    }
    \subfigure[edge orbit]{
    \includegraphics[scale=0.3]{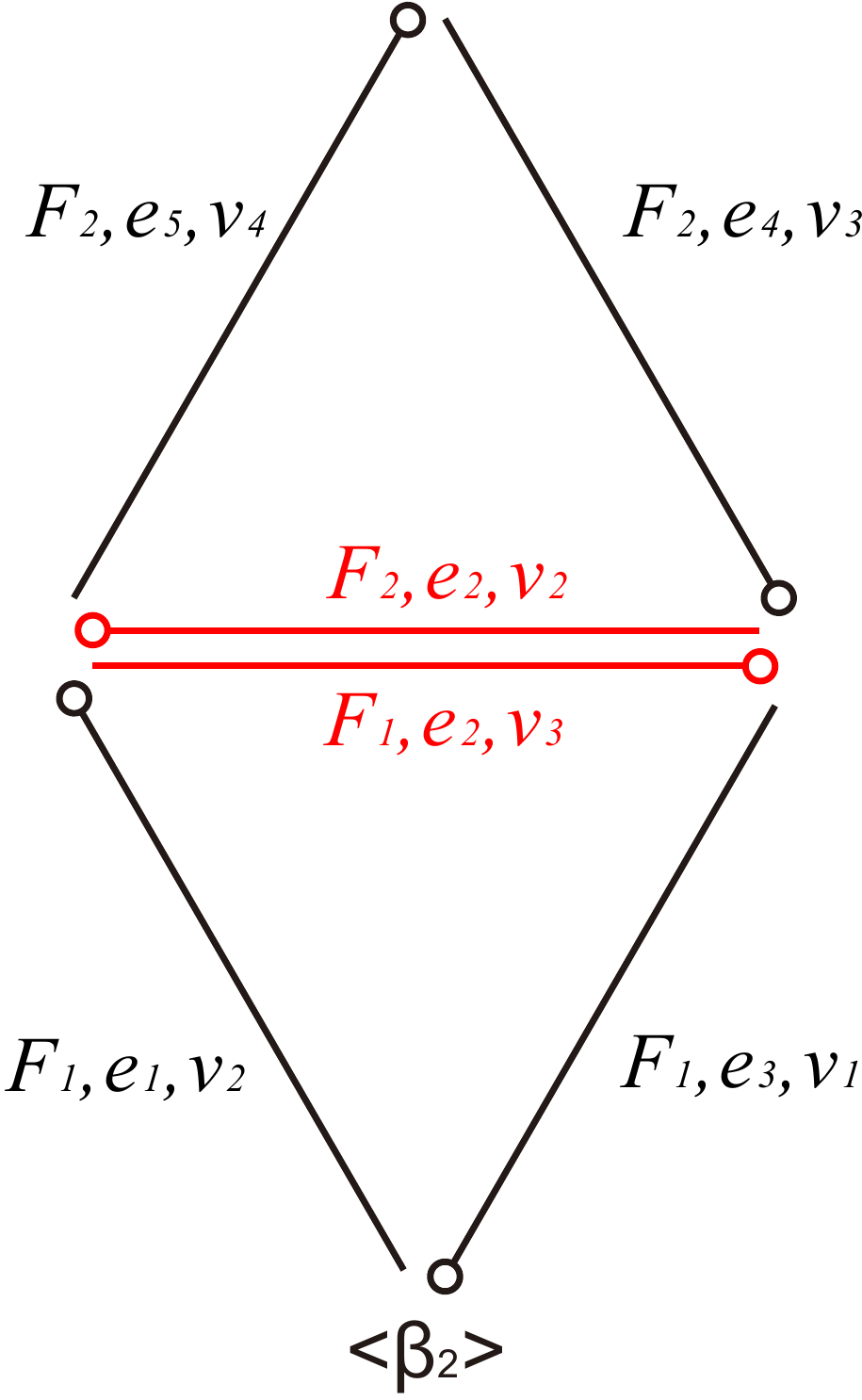}
    \label{fig:edge}
    }
    \subfigure[face orbit]{
    \includegraphics[scale=0.3]{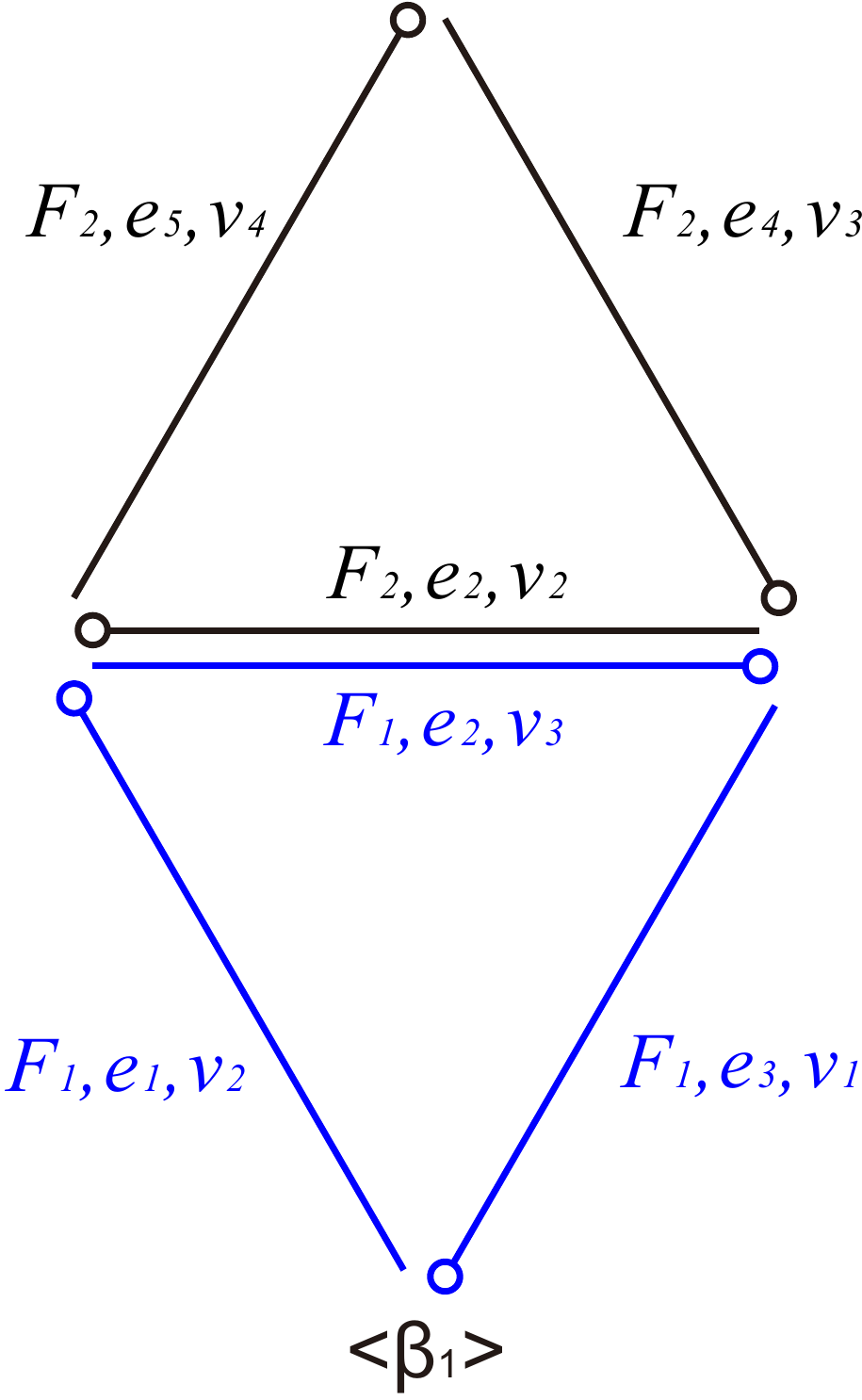}
    \label{fig:face}
    }
    \quad
    \caption{Three different types of orbit. (a), (b) and (c) shows the darts that contain the vertex $v_3$, edge $e_2$ and face $F_1$, respectively.}\label{fig:orbit}
\end{figure}


The orbit provides a tool to efficiently query cells on the mesh. If we need to remove or subdivide an $i$-cell, all the darts that are related to it can be accessed by traversing the orbit.  A 2D example given in Figure \ref{fig:orbit} shows that if this geometry object breaks at the edge $e_2$, all the darts (colored in red) that need to be updated can be accessed by the orbit $\langle \beta_2 \rangle (F_1,e_2,v_3)$. In the rest of this paper, we use $\beta_n$ in our implementation of the query algorithm. 
The ``neighboring tetrahedrons iterator'' presented in Algorithm \ref{alg:Tet2Tetiterator} is an iterator on $3$-dimensional mesh for traversing the neighboring tetrahedrons that are adjacent to the given tetrahedron. More efficient neighborhood iterators that are useful for nonlocal problems can be found in \cite{cmap:2010}.

\begin{algorithm}[!ht]
 \caption{neighboring tetrahedrons iterator}
 \label{alg:Tet2Tetiterator}  
 \begin{algorithmic}[1]
     \REQUIRE A given tetrahedron $t$;
        \ENSURE the tetrahedrons that are adjacent to $t$
        \STATE $d_0 \leftarrow D(t)$
        \STATE $\mathbf{if}$ $\beta_3(d_0)$ $\neq$ $\epsilon$ $\mathbf{then}$ output $C_3(\beta_3(d_0))$
        \STATE $d \leftarrow d_0$
        \REPEAT
        \STATE $d_2 \leftarrow \beta_3(\beta_2(d))$
        \STATE $\mathbf{if}$ $d_2$ $\neq$ $\epsilon$ $\mathbf{then}$ output $C_3(d_2)$
        \STATE $d \leftarrow \beta_1(d)$
        \UNTIL{$d = d_0$}
 \end{algorithmic}
\end{algorithm}

\subsection{Approximate Ball and Quadrature Rules}

As we have mentioned above, we {adopt a polytope} $B_{\delta,h}(x)$ to approximate the Euclid ball $B_{\delta}(x)$, and many approximation methods have been proposed in \cite{approx}.
For example,
\emph{barycenter}: by finite elements of which the barycenter lies within the ball; \emph{overlap}: by finite elements that intersect the ball; \emph{Inside}: by finite elements that wholly inside the ball; \emph{nocaps}: by simplices that is subdivided from \emph{overlap}; and \emph{approxcaps}: by simplices from \emph{nocaps} and the approximation of these caps. For convenient illustration, the examples of the 2D or 3D situation have been given in Figure \ref{fig:approxball}.

The implementation of the approximation \emph{nocaps} and \emph{approxcaps} is not as easy as that of \emph{overlap} and \emph{barycenter}, because the approximation strategies \emph{nocaps} and \emph{approxcaps} {consist of additional cells subdividing for the cells that intersect the ball.} As shown in Figure \ref{fig:2dapproxcaps}, the blue triangles belong to the finite element cells, and each of the orange triangle is part of a finite element cell. To alleviate these difficulties encountered in the finite element assembly process and make the process more efficient, we will use C-map data structure introduced in the above subsection. We here introduce how to construct those approximations with the C-map.

Algorithm \ref{alg:iterator} is a general interface provided for constructing $B_{\delta,h}(x)$. For a given $n$-cell $[c^n]$ and one of its quadrature point $p$, the cells that are adjacent to $[c^n]$ are traversed in a breadth-first way. The cells that satisfy $\mathcal{E}_k\subset B_{\delta}(p)$ are pushed into the queue $Q$. The cells that are not entirely included in the $B_\delta(x)$ will be specially treated according to the choice of ball approximation strategies. The polytope $B_{\delta,h}(x)$ consists of the newly generated cells and the cells that are fully contained. For example, if we use the nocaps strategy in the Definition \ref{def:nocaps}, the related algorithm is given in Algorithm \ref{alg:nocapsiterator}.

\begin{algorithm}[!ht]
 \caption{construction of approximate ball $B_{\delta,h}(\cdot)$ } \label{alg:iterator}
 \begin{algorithmic}[1]
     \REQUIRE A given tetrahedron $\mathcal{E}_0$ and a point $p$ in $\mathcal{E}_0$;
        \ENSURE  the approximate ball $B_{\delta,h}(p)$
        \STATE $Q$ : a queue to preserve the current searching path.
        \STATE push $\mathcal{E}_0$ into $Q$;
        \STATE output $\mathcal{E}_0$;
        \WHILE{$Q$ not empty}
        \STATE $\mathcal{E}_1$ $\leftarrow$ pop the head of $Q$
        \FOR{each cell $\mathcal{E}_2$ adjacent to $\mathcal{E}_1$}
        \IF{$\mathcal{E}_2$ is not visited \&\& $\mathcal{E}_2 \cap B_\delta(p) \neq \emptyset$}
        \STATE push $\mathcal{E}_2$ into $Q$
        \IF{$\mathcal{E}_2 \subset B_\delta(p)$}
        \STATE output $\mathcal{E}_2$
        \ELSIF{$\mathcal{E}_2 \cap B_\delta(p) \neq \emptyset$}
        \STATE output the subdivision of  $\mathcal{E}_2 \cap B_\delta(p)$ according to the choice of ball approximation strategy
        \ENDIF
                \ENDIF
        \ENDFOR
        \ENDWHILE
 \end{algorithmic}
\end{algorithm}

\begin{algorithm}[!ht]
 \caption{construction of approximate ball $B_{\delta,h}(\cdot)$ according to Definition \ref{def:nocaps}} \label{alg:nocapsiterator}
 \begin{algorithmic}[1]
     \REQUIRE A given tetrahedron $\mathcal{E}_0$ and a point $p$ in $\mathcal{E}_0$;
        \ENSURE  the approximate ball $B_\delta^\sharp(p)$
        \STATE $Q$ : a queue to preserve the current searching path.
        \STATE push $\mathcal{E}_0$ into $Q$;
        \WHILE{$Q$ not empty}
        \STATE $\mathcal{E}_1$ $\leftarrow$ pop the head of $Q$
        \FOR{each cell $\mathcal{E}_2$ adjacent to $\mathcal{E}_1$}
        \IF{$\mathcal{E}_2$ is not visited}
        \STATE push $\mathcal{E}_2$ into $Q$
        \IF{$\mathcal{E}_2 \subset B_\delta(p)$}
        \STATE output $\mathcal{E}_2$
        \ELSIF{$\mathcal{E}_2 \cap B_\delta(p) != \varnothing$}
        \STATE $P$ $\leftarrow$ the intersection points of $\partial B_\delta(p)$ and $\mathcal{E}_2$
        \IF{$P \neq \varnothing$ }
        \STATE $I$ $\leftarrow$ the vertices of $\mathcal{E}_2$ inside $B_\delta(p)$
        \STATE $U_I$ $\leftarrow$ calculate the convex hull of $I \cup P$
        \STATE $T_I$ $\leftarrow$ subdivision of the convex hull $U_I$ to a number of simplices
        \STATE the simplices in $T_I$ inherit the basis function and material of $\mathcal{E}_2$
        \STATE output $T_I$
        \ENDIF
        \ENDIF
        \ENDIF
        \ENDFOR
        \ENDWHILE
 \end{algorithmic}
\end{algorithm}

\subsubsection{Nocaps with Gauss Quadrature Rules}
In the \emph{nocaps} approximation, the finite elements that satisfy $\mathcal{E}_k\cap \partial B_{\delta}(p)\neq\emptyset$, i.e. the elements that are not entirely included will be subdivided into some new cells. The newly generated cells are expected to be compatible with the original cells and inherit some attributes. A newly temporal C-map is generated for representing the \emph{approxcaps} and \emph{nocaps} ball $B_{\delta,h}(x)$. A 2D example is shown in Fig. \ref{fig:dart in approx}, where the black darts belong to the original element of the mesh $\mathcal{T}_{\hat{\Omega}}^h$, and the blue darts belong to the cell-decomposition of {approximate ball} $B_{\delta,h}(x)$.

\begin{figure}
    \centering
    \includegraphics[scale = 0.4]{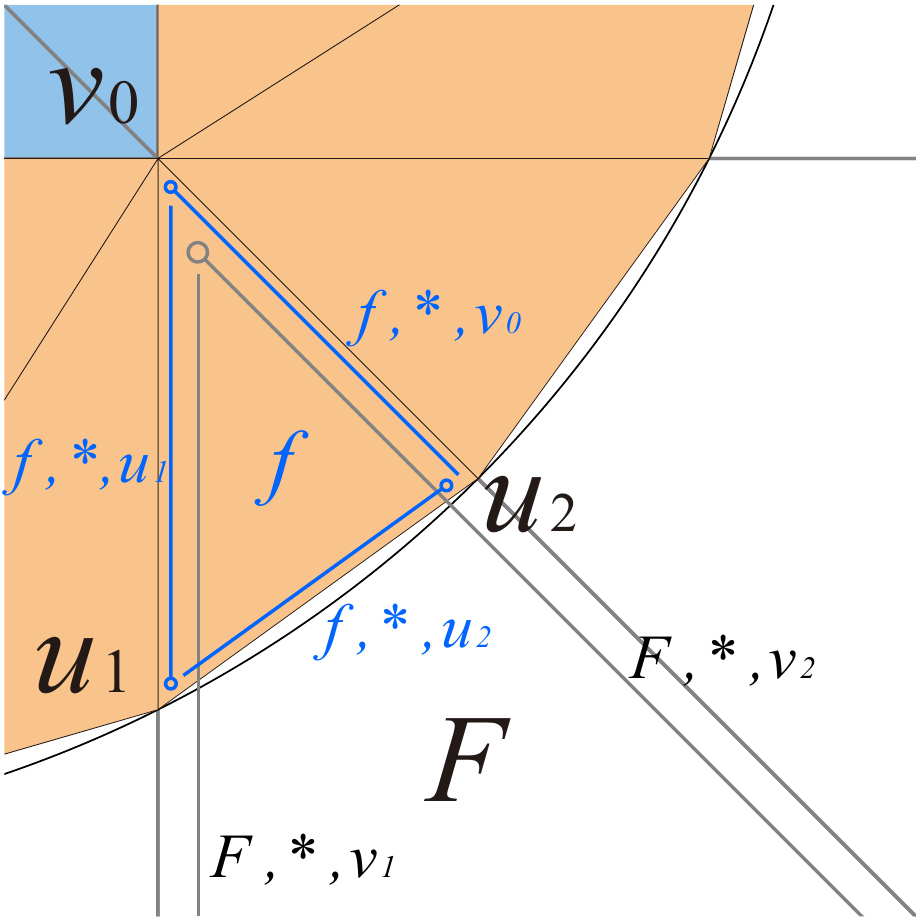}
    \caption{This is an enlargement of the red box in Fig. \ref{fig:2dinscribe}. In the process of construction approximate ball, cell $f$ is part of the subdivision of $F$. The darts colored in blue belong to the newly generated cell $f$.}
    \label{fig:dart in approx}
\end{figure}

The mapping $desk: B_{\delta,h}(x) \rightarrow \mathcal{T}_{\hat{\Omega}}^h$ is defined to drawback the cells from the {approximate ball} to the finite element mesh, such that the newly generated cells can inherit some attributes (basis function, material, etc.) from its parent's finite elements. The life cycle of the newly generated cells in an approximate ball should be consistent, i.e. these cells will be simultaneously destructed.

For quadrature rules used for outer and inner integration, we both use $4$-point Gauss quadrature rule that has a degree of precision $2$ in tetrahedron, instead of the quadrature rules such as KEAST6 based on the Keast Rule, or using the quadrature rule of tetrahedron in \cite{AS64} for outer integration, or the Dunavant 7-point rule used in \cite{AC21}. 
The selection of this quadrature rule is based on ensuring that the error of inner integration and the outer integration will not affect the convergence order of the finite element solution, and at the same time using the least quadrature points required to obtain this accuracy. One can find discussions in \cite{d2021cookbook,Vollmann2019} {about} quadrature rules, and it can be easily extended from the $2$-dimensional case to the $n$-dimensional case.

\subsubsection{Fullcaps with Monte Carlo integration}

For the ``nocaps'' approximation, the integrals over the caps are {ignored for the simplicity of programming}. The study in \cite{approx} provides a 2D strategy named ``approxcaps'' that uses a number of triangles to approximate the caps. However, ``approxcaps'' is difficult to generalize to 3D due to programming difficulties. Even in 2D, further approximation of the $B_\delta(x)$ leads to more computational operation and difficulties in implementation. By using Monte Carlo integrals, we can easily {compute an acceptable result of the integral over the complex region}. Therefore, we {propose} a new approximation strategy called ``fullcaps'', which adopts the ``Combined Geometry Via Boolean Operations'' for representing the caps and Monte Carlo quadrature rules of the {integrals over caps}.

The idea of ``fullcaps'' is mainly to deal with an element $\mathcal{E}_k$ that satisfy $\mathcal{E}_k\cap\partial B_\delta(x)\neq\emptyset$. The $\mathcal{E}_k\cap B_\delta(x)$ is subdivided into a number of (maybe zero) tetrahedrons $\{\mathcal{E}_{ki}\}$ and an additional region called fullcap. The fullcap is represented by a combined geometry via boolean operations, namely, $$
\text{fullcap}  := (\mathcal{E}_k- \cup_i \mathcal{E}_{ki}) \cap B_\delta(x).
$$
As shown in Figure \ref{fig:3DcapsCases}, we provide six different intersection cases of the tetrahedron and Euclidean ball. The fullcaps are colored by blue and the newly generated cells are colored by yellow.

A tetrahedron is an explicit geometric representation that can be used to quickly generate sample points. The Euclidean ball $B_{\delta}(p)$ is an implicit geometric representation that can quickly determine whether a point is inside the geometry. Therefore we adopt Monte Carlo method to compute the {integrals over} those fullcaps. Although this method brings white noise, the improvement of integration accuracy is enough to offset the random error brought by white noise because fullcaps make a very small contribution to the whole integral. More importantly, compared to the approxcaps, the fullcaps stratgy is much easy to be implemented, and its fullcaps ball approximation algorithm is given in Algorithm \ref{alg:fullcaps}.

\begin{algorithm}[!ht]
 \caption{construction of fullcaps and ball $B_{\delta,h}(\cdot)$ } \label{alg:fullcaps}
 \begin{algorithmic}[1]
     \REQUIRE A given tetrahedron $\mathcal{E}_0$ and a point $p$ in $\mathcal{E}_0$;
        \ENSURE  the approximate ball $B_{\delta,h}(p)$
        \STATE $Q$ : a queue to preserve the current searching path.
        \STATE push $\mathcal{E}_0$ into $Q$;
        \WHILE{$Q$ not empty}
        \STATE $\mathcal{E}_1$ $\leftarrow$ pop the head of $Q$
        \FOR{each cell $\mathcal{E}_2$ adjacent to $\mathcal{E}_1$}
        \IF{$\mathcal{E}_2$ is not visited}
        \STATE push $\mathcal{E}_2$ into $Q$
        \IF{$\mathcal{E}_2 \subset B_\delta(p)$}
        \STATE output $\mathcal{E}_2$
        \ELSIF{$\mathcal{E}_2 \cap B_\delta(p) != \varnothing$}
        \STATE $P$ $\leftarrow$ the intersection points of $\partial B_\delta(p)$ and $\mathcal{E}_2$
        \IF{$P \neq \varnothing$ }
        \STATE $I$ $\leftarrow$ the vertices of $\mathcal{E}_2$ inside $B_\delta(p)$
        \STATE $O$ $\leftarrow$ the vertices of $\mathcal{E}_2$ outside $B_\delta(p)$
        \STATE $U_I$ $\leftarrow$ calculate the convex hull of $I \cup P$
        \STATE $T_I$ $\leftarrow$ subdivision of the convex hull $U_I$ to a number of simplices
        \STATE the simplices in $T_I$ inherit the basis function and material of $\mathcal{E}_2$

        \STATE $U_O$ $\leftarrow$ calculate the convex hull of $O \cup P$
        \STATE $T_O$ $\leftarrow$ subdivision of the convex hull $U_O$ to a number of simplices
        \STATE $Fullcaps$ $\leftarrow$ the simplices in $T_O$ combines the $B_\delta(p)$ via boolean operations
        \STATE the elements in $Fullcaps$ inherit the basis function and material of $\mathcal{E}_2$
        \STATE output $Fullcaps$ and $T_I$
        \ELSE
        \STATE $Fullcaps$ $\leftarrow$ $\mathcal{E}_2$ combines $B_\delta(p)$ via boolean operations
        \STATE output $Fullcaps$
        \ENDIF

        \ENDIF
        \ENDIF
        \ENDFOR
        \ENDWHILE
 \end{algorithmic}
\end{algorithm}

The accuracy of the Monte Carlo integration depends on the sampling method. If the points are directly uniformly sampled from the tetrahedron, the probability of the points inside the fullcap may be relatively small. {This} will decrease the integral accuracy, and one may need amount of samples to raise the accuracy of integration, which may also raise the cost of computation. {The} sampling methods and integration methods {over} fullcaps need to be further studied.

\subsection{Assembly process}
In this section, we introduce an efficient method to assemble the stiffness matrix and right-hand side vector of the linear system \eqref{eq:modified_linear}.

Suppose that we have an $n$-dimensional mesh with two domains $\Omega$ and $\Omega_\mathcal{I}$ such that $B_{\delta,h}(x) \subset \Omega \cup \Omega_\mathcal{I},\,\forall\,x \in \Omega$. The maximum, average and minimum mesh size is denoted by $h_{\max}$, $h_{\text{avg}}$ and $h_{\min}$, respectively. Besides, we have $g(x):\Omega_\mathcal{I} \rightarrow R$ and $f(x):\Omega \rightarrow R$. With those settings, the linear system of the finite element discretization \eqref{eq:modified_linear} is uniquely determined mathematically.

For assembling the linear system, the task in hand is how to efficiently compute the entries of the stiffness matrix $A_h$ and the components of the right-hand side vector $\widetilde{\widetilde{F}}_h$. Similar to the local cases, matrix $A_h(\cdot,\cdot)$ is sparse, but its sparsity is less than that of the local problem. In fact, for a given element, there are $\mathcal{O}(\frac{\delta^n}{h^n})$ interacted elements. However, it is difficult to know in advance whether $A_h(\phi_j,\phi_j)$ is zero because elements that make non-zero contributions do not have to be adjacent like in local problems. 
For nonlocal problems, the query of elements in the intersection domain and their associated basis functions can be very complex. Therefore, the main idea of our algorithm is to search the pairs of finite elements that may make non-zero contributions, then traverse the basis functions $\phi_i$ and $\phi_j$ that pertained to this pair of elements, and compute their contributions to the linear system.

More precisely, we first traverse the finite elements in $\mathcal{T}_\Omega$. The outer integral is the summation of integrals {over} those elements. So we have that
\begin{equation}
        A_h(\phi_j,\phi_i) = \sum_{\mathcal{E}_n\in K_\Omega} \int_{\mathcal{E}_n}W(x)+N(x)dx,
\end{equation}
where
\begin{equation}
\label{eq:notreplaced}
\begin{split}
    W(x) = &\sum_{\mathcal{E}_m \in \Omega \cap B_{\delta,h}(x)}
\int_{\mathcal{E}_m} (\phi_{j}(y) - \phi_{j}(x)) (\phi_{i}(y) - \phi_{i}(x) )\psi(x,y)dy,\\
N(x) = &\sum_{\mathcal{E}_m \in \Omega_\mathcal{I} \cap B_{\delta,h}(x)}
2 \phi_{j}(x) \phi_{i}(x)\int_{\mathcal{E}_m}\psi(x,y)dy.
\end{split}
\end{equation}

For each finite element $\mathcal{E}_n$, we generate the quadrature points and weights $(p_k, \omega_k)$. So the integral {over} the element $\mathcal{E}_n$ can be written in the following form
\begin{equation}
    \int_{\mathcal{E}_n}W(x)+N(x)dx = \sum_{p_k\in \mathcal{E}_n} \omega_k (W(p_k)+N(p_k)).
\end{equation}

Now, the implementation difficulty of computing numerical integrals over $B_\delta(x)$ arises from the computation of the inner integral $\int_{\mathcal{E}_m}(\cdot)$ in $W(x)$ and $N(x)$. As presented and discussed in the former sections, we adopt the {polytope} $B_{\delta,h}(\cdot)$ to replace the Euclid ball $B_\delta(x)$ in integral computation. Therefore, for each quadrature point $p_k$, we generate the {polytope} $B_{\delta,h}(p_k)$, and the inner {integrals are} now over a series of simplices. The process of {generating $B_{\delta,h}(p_k)$} can follow Algorithm \ref{alg:iterator}.

The function $\phi_{(n,i)}(x)$ is defined to be $\phi_i(x) \mathcal{X}_{\mathcal{E}_n}(x)$, where $\mathcal{X}_{\mathcal{E}_n}(x)$ is the indicative function. The $\phi_{(n,i)}(x)$ is a linear function on the element $\mathcal{E}_n$ when we use Lagrange linear bases. Replacing the basis functions $\phi_i(\cdot)$ in equation \eqref{eq:notreplaced} by $\phi_{(n,i)}(\cdot)$, we have
\begin{equation}
\label{eq:replaceBasis}
\begin{split}
    W(x) = &\sum_{\mathcal{E}_m \in \Omega \cap B_{\delta,h}(x)}
\int_{\mathcal{E}_m} (\phi_{(m,j)}(y) - \phi_{(n,j)}(x)) (\phi_{(m,i)}(y) - \phi_{(n,i)}(x) )\psi(x,y)dy,\\
N(x) = &\sum_{\mathcal{E}_m \in \Omega_\mathcal{I} \cap B_{\delta,h}(x)}
2 \phi_{(n,j)}(x) \phi_{(n,i)}(x)\int_{\mathcal{E}_m}\psi(x,y)dy .
\end{split}
\end{equation}
By considering the nonzero contribution of these elements, we have the following items:
\begin{itemize}
\item When $\mathcal{E}_m\in\Omega\cap B_{\delta,h}(x)$, we have $N(x)=0$. And $W(x)\neq0$ only when $\phi_i$ and $\phi_j$ are two basis function pertained to $\mathcal{E}_n$ or $\mathcal{E}_m$.
\item When $\mathcal{E}_m\in\Omega_\mathcal{I}\cap B_{\delta,h}(x)$, we have $W(x)=0$. And $N(x)\neq0$ only when $\phi_i$ and $\phi_j$ are pertained to $\mathcal{E}_n$.
\end{itemize}
How to make good use of the geometric relationship between these elements is important for fast assmebly process. To alleviate the cost of additional judgements, the topological relations of the finite elements could be used to predict the relation between quadrature points and basis functions. This is not that difficult in the implementation once we construct the relationship between basis function and elements properly. Some details about considering the boundary layer $\Omega_\mathcal{I}$ are already discussed in \cite{d2021cookbook}.



It should be pointed out that we use Euclid coordinates instead of area coordinates because the newly generated cells in the approximate ball inherit the basis function of their parents. Because the kernel $\psi$ is unable to be computed directly from area coordinates, the transformation from the area coordinate to the Euclid coordinate is repeatedly invoked if we use area coordinates. Euclid coordinates bring convenience to integration over the approximate ball, so it is better to use them here.

The second term $\widetilde{\widetilde{F}}_h(\cdot)$ in \eqref{eq:modified_linear} is relatively easy to compute because the inner integral is independent of the basis functions, and we do not need to judge the relationship between the basis functions and the elements as in assembling $A_h$. 

One can see that in the process of considering outer integration point $p_k$, both the assembly of the stiffness matrix and the construction of the right-hand vector require the same approximate ball $B_{\delta,h}(x)$. Therefore, the assembly of the right-hand vector can be carried out simultaneously with the assembly of the stiffness matrix. By combing all the discussions above, the pseudo-code of this process is presented in Algorithm \ref{alg:main}.

\begin{algorithm}[!ht]
	\caption{CG Elliptic equation} \label{alg:main}  %
	\begin{algorithmic}[1]%
	    \REQUIRE Suitable Mesh $\mathcal{T}^h_{\hat{\Omega}}$, $\delta$, volume constraint $g(x)$ on $\Omega_\mathcal{I}$, force $f(x)$;
        \ENSURE  The solution of the nonlocal Poisson equation
        \STATE a Tuples array is needed for storing the contributions to $A$
        \STATE a Pairs array is needed for storing the contributions to $b$
        \FOR{each element $\mathcal{E}_n$ in $\mathcal{T}^h_{{\Omega}}$ parallelly}
        \STATE Generate the Gauss quadrature Points $P$ and weights $W$ for $\mathcal{E}_n$
        \FOR{each point $p \in P$, and weight $w\in W$}
        \STATE Generate the approximate ball $B_{\delta,h}(p)$ for $p$

        \FOR{each element $\mathcal{E}_m \in B_{\delta,h}(p)$ parallelly}
        \STATE //calculate the contributions of $\mathcal{E}_n$ and $\mathcal{E}_m$, and save them in Tuples and Pairs.

            \FOR{each pair of $\phi_i$ and $\phi_j$ that pertained to $\mathcal{E}_n$ and $\mathcal{E}_m$}
                \IF{$\widetilde{x}_i \in \Omega$ and $\widetilde{x}_j\in \Omega$ }
                    \IF{$\mathcal{E}_m \in \mathcal{T}^h_{{\Omega}}$}
                    \STATE $a \leftarrow$ $\int_{\mathcal{E}_m} (\phi_j(y) - \phi_j(p)) (\phi_i(y) - \phi_i(p) )\psi(p,y)dy$
                    \ELSIF{$\mathcal{E}_m \in \mathcal{T}^h_{{\Omega_{\mathcal{I}}}}$}
                    \STATE $a \leftarrow$$2 \phi_j(p) \phi_i(p)\int_{\mathcal{E}_m}\psi(p,y) dy$

                    \STATE $c \leftarrow$$2\phi_i(p)\int_{\mathcal{E}_m} g(y) \psi(p,y) dy$

                    \ENDIF
                    \STATE save $( i\textbf{ , }  j \textbf{ , }  a * w)$ to Tuples
                    \STATE save $( i\textbf{ , }  c * w)$ to Pairs

                    \ENDIF

                    \IF{$\widetilde{x}_i \in \Omega$ and $\widetilde{x}_j \in \partial\Omega$}
                    \IF{$\mathcal{E}_m \in \mathcal{T}^h_{{\Omega}}$}
                    \STATE $c \leftarrow$ $\int_{\mathcal{E}_m} (\phi_j(y) - \phi_j(p)) (\phi_i(y) - \phi_i(p) )\psi(p,y)dy$
                    \ELSIF{$\mathcal{E}_m  \in  \mathcal{T}^h_{{\Omega_{\mathcal{I}}}}$}
                    \STATE $c \leftarrow$ $2 \phi_j(p) \phi_i(p)\int_{\mathcal{E}_m}\psi(p,y) dy$
                    \ENDIF

                    \STATE save $(j\textbf{ , }  -c* w *g(p_j))$ to Pairs
                \ENDIF
            \ENDFOR

        \STATE save $( i \textbf{ , } \phi_i(p) * f(p) * w)$ to Pairs
        \ENDFOR
        \ENDFOR
        \ENDFOR
        \STATE set Sparse matrix A from Tuples
        \STATE set Right-hand side vector b from Pairs
        \STATE uses the conjugate gradient method to solve $Ax = b$
	\end{algorithmic}
\end{algorithm}

\subsection{Parallelizing}
Many steps in the traditional finite element algorithm can be decomposed into a series of vectorization operations, such as computing numerical integration in the matrix assembly process and Matrix-Vector Multiplication during the solution process. Hardware and software development in computer science provide many supports for these vectorization operations.
However, FEM for nonlocal problems cannot be easily decomposed into a series of vectorization operations, which brings  challenges to the parallelization of the assembly process. As we have described in the previous sections, the construction of approximate ball during the assembly process involves recursive breadth-first search and mesh modification, which can not be decomposed into a series of vectorization operations directly. Therefore, we take a different approach here to parallelize the assembly process of nonlocal problem's linear system.

Operations in the assembly process can be divided into two categories based on granularity. The first type is coarse-grained: a program is split into several relatively large tasks. Each task can perform more complex calculations. These coarse-grained tasks can be parallelized by the distributed and multi-core system. In our assembly process, constructing the approximate ball is one such coarse-grained task that involves lots of branches and unaligned memory access. The second type is fine-grained: a program is broken down into a number of relatively small tasks. There exists the same instruction sequence and few branches in these small tasks. During our assembly, computing the integrals over different simplices of the approximate ball is a fine-grained task. Such tasks are suitable for computers with SIMD architectures, such as vector arithmetic instructions (AVX SSE, etc.) and General Purpose Graphics Processing Units (GPGPU).

Our assembly process can be split into a series of operations. First, allocate the finite elements in $\mathcal{T}_\Omega$ dynamically to several threads in a load-balanced way. This process is corresponding to line 3 in Algorithm \ref{alg:main}. Second, for each element $\mathcal{E}_k$, the corresponding thread generates the quadrature point $p$ and the approximate ball $B_{\delta,h}(p)$. The information of tetrahedrons in $B_{\delta,h}(p)$ is prepared in an array. Third, for the elements in this array, we use vectorization operations to calculate the contributions to the linear system and return a tuple array.
This process is corresponding to line 7 in Algorithm \ref{alg:main}. Last, bitonic sort and reduce operations are iteratively invoked on the tuples arrays until all threads are terminated. At the end of the process, we have a stiffness matrix $A$ stored in the coordinate format (COO) and a right-hand side vector $F$.

In \cite{PS20},  the authors present an algorithm specifically designed to directly assemble  sparse matrices in a multi-threaded shared memory setting, which enables a fast and efficient solution for nonlocal problems. In addition, the asynchronous and task-based solution is implemented in \cite{DJ20}. For nonlocal problems, when computing on distributed CPU or distributed memory systems, the details of parallelism need to be further studied, which is necessary for solving large-scale nonlocal problems.


\section{Experiment and benchmark}\label{sec:numerical}
In this section, we provide some numerical experiments for further illustrations of the accuracy and efficiency of our algorithm. These numerical examples cover both 2D and 3D cases, and involve various types of ball approximation strategies. 
The accuracy of our algorithm is evaluated by the $L^2$-error, and efficiency is evaluated by the peer-to-peer (P2P) execution time.

For computing the convergence rates, we construct a variety of manufactured meshes with different mesh sizes, including uniform meshes and unstructured meshes. Examples of the mesh used in the numerical experiments are presented in Figure \ref{fig:mesh}. The minimum step size of these meshed varies gradually from $0.025$ to $0.00731$, while the average step size varies from $0.025$ to $0.00731$.


All the ball approximation strategies can be used for {correct} solution when $\delta \ll h$. 
In our experiment, we choose the $\delta$ to be 3 $\sim$ 7 times larger than the grid sizes.

\begin{figure}[htb]
\centering
\subfigure[uniform 2D mesh]{
\includegraphics[scale=0.05]{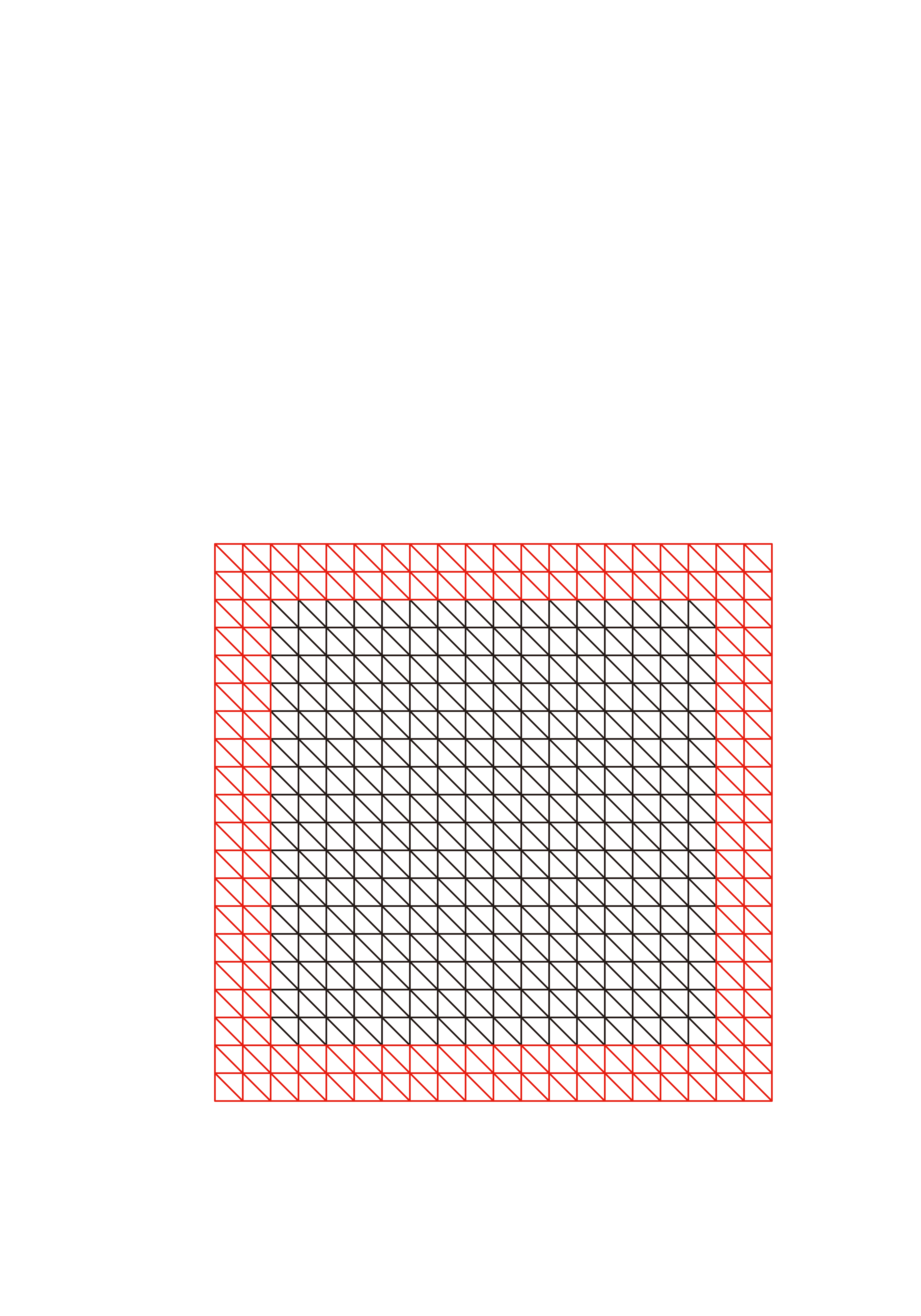}
}
\subfigure[unstructured 2D]{
\includegraphics[scale=0.05]{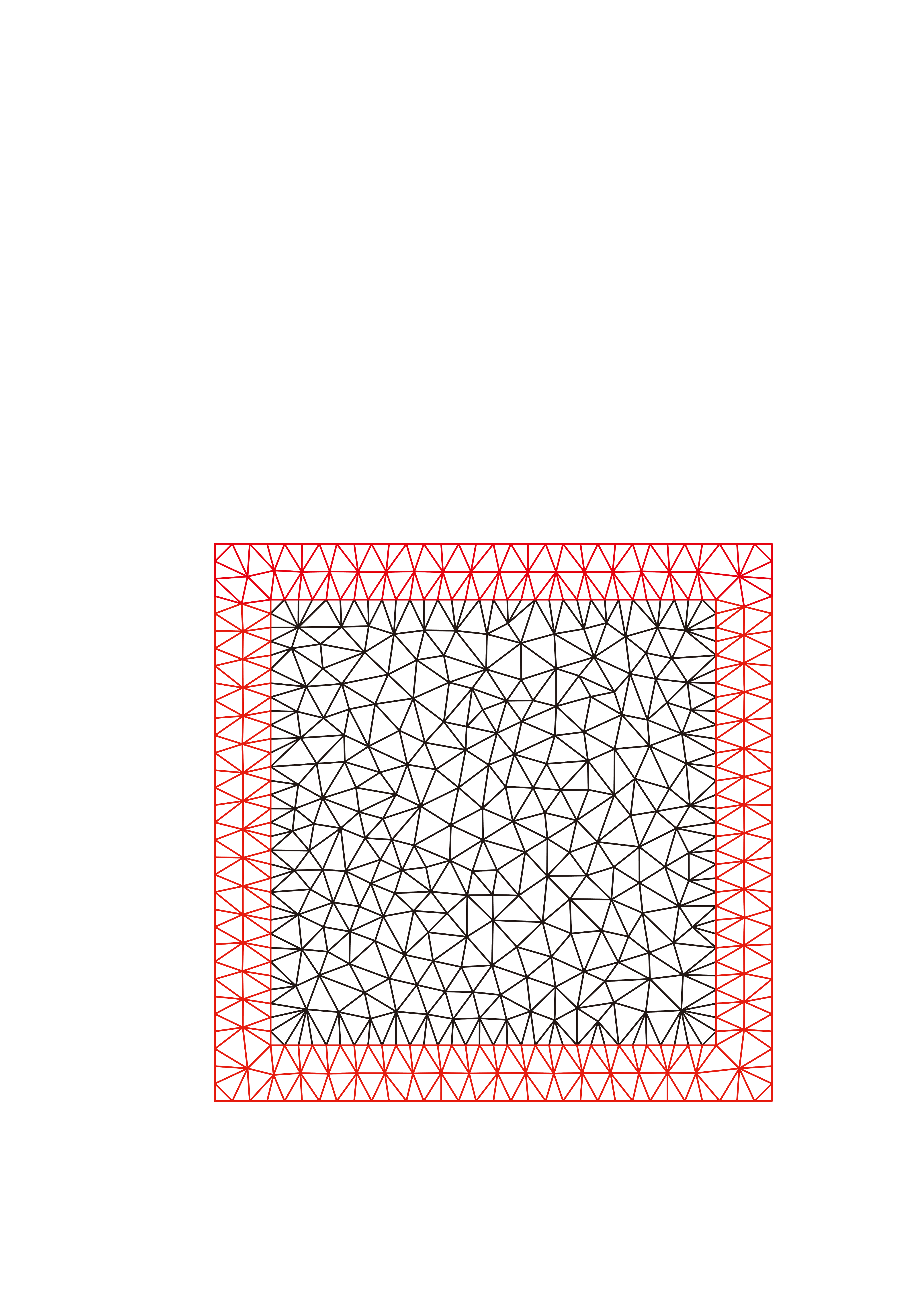}
}
\subfigure[unstructured 3D mesh]{
\includegraphics[scale=0.07]{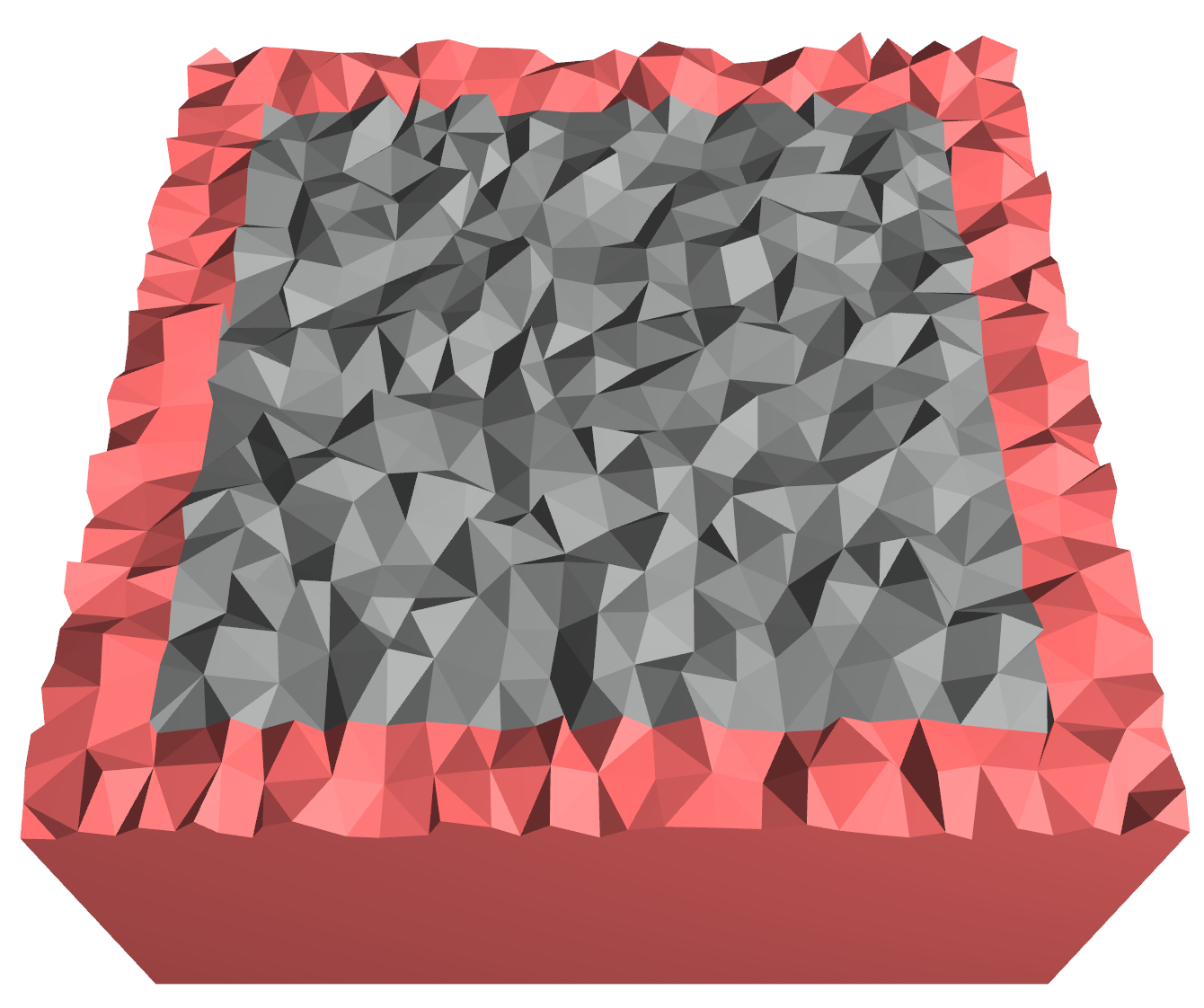}
}
\caption{ (a) Uniform mesh. (b) and (c) Unstructured meshes.}
\label{fig:mesh}
\end{figure}

\subsection{2D numerical experiments}
We take $\Omega = (0, 1)^2$, and $\gamma(x,y) = C \cdot \mathcal{X}_{B_\delta(x)}(y)$ with $C$ making sure $\int_{B_\delta(x)} \gamma(x,y) dy = 2$, and choose the manufactured solution given in \cite{d2021cookbook} as
$u(x) = x_1^2x_2 + x_2^2.$
The external force is computed by $f(x) = -\mathcal{L}u $ and the nonlocal Dirichlet volume constraint is taken as $g(x) = u(x) \text{ for }  x \in \Omega_\mathcal{I}$.

We choose the \emph{overlap, inside, barycenter, nocaps} approximation strategies in our 2D experiments and compare their accuracy and efficiency. 


\begin{table*}[htb]  \caption{ $L_2$ errors of 2-D numerical results }
 \label{table_2D}
\centering
 \begin{tabular}{lllllllll}
\toprule
   &dof &$K_\Omega$ &$h$ &inside &overlap &barycenter &nocaps  \\
\midrule

&1521   &5000    &0.0227  &4.06E-02&2.27E-02  &4.91E-04 &9.51E-04\\
&6241   &20000   &0.0114  &1.71E-02&1.33E-02  &2.28E-04 &1.69E-04  \\
&25281  &80000   &0.0057  &8.01E-03&7.00E-03  &5.84E-05 &3.70E-05 \\
&101761 &320000  &0.0028  &3.63E-03&3.24E-03   &1.47E-05& 9.26E-06 \\

\bottomrule
\end{tabular}
\end{table*}

\begin{figure}[ht]
\centering
\subfigure{
\includegraphics[scale=0.45]{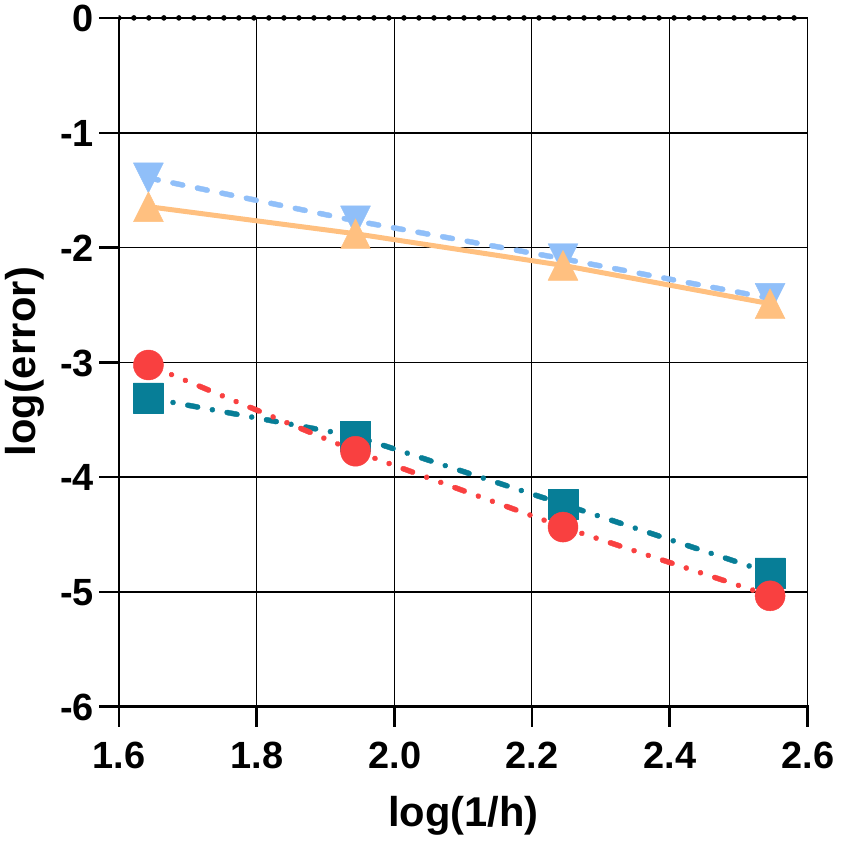}
}
\subfigure{
\includegraphics[scale=0.45]{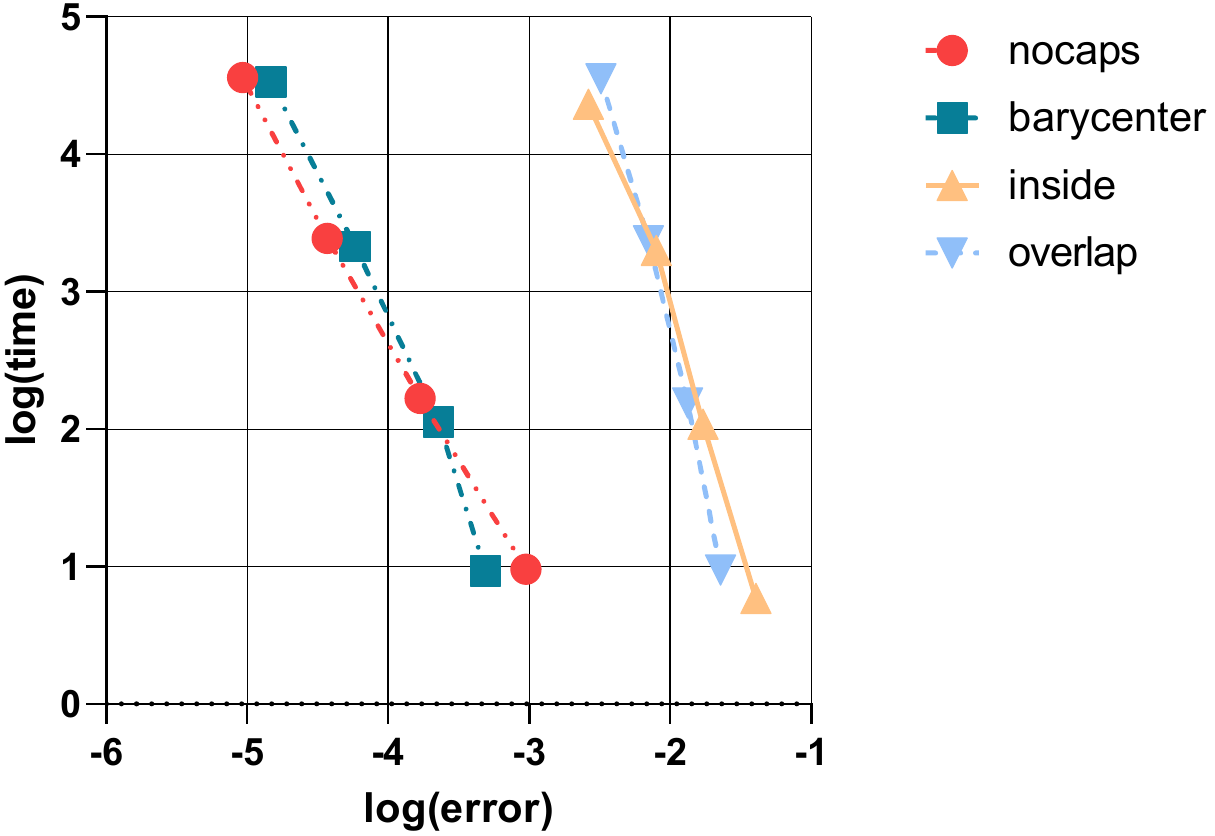}
}
\caption{Errors vs. average mesh sizes (left) and assembly times vs. errors (right)}
\label{fig:accur2D}
\end{figure}

We evaluate the convergence rates in $L^2$-norm. As predicted by the theory in Section 3, we observe  second-order convergence rates for  ``barycenter'' and ``nocaps'' ball approximations, and  first-order convergence rates for ``inside'' and ``overlap'' ball approximations in Table \ref{table_2D}. Figure \ref{fig:accur2D} plots the errors and assembly times, which shows the line lower left, the more effective the approximate strategy is.

\subsection{3D numerical experiments}
Similar to 2D case, we take $\Omega = (0, 1)^3$ and $\gamma(x,y) = C * \mathcal{X}_{B_\delta(x)}(y)$ with $\delta = 0.1$ and $C$ making sure $\int_{B_\delta(x)} \gamma(x,y) dy = 3$. The manufactured solution is taken as
$u(x) = (1-x_1)(1-x_2)(1-x_3)x_1x_2x_3.$  The \textit{overlap, inside, barycenter, nocaps, fullcaps} approximation strategies are investigated. 


\begin{figure}[ht]
\centering
\subfigure{
\includegraphics[scale=0.45]{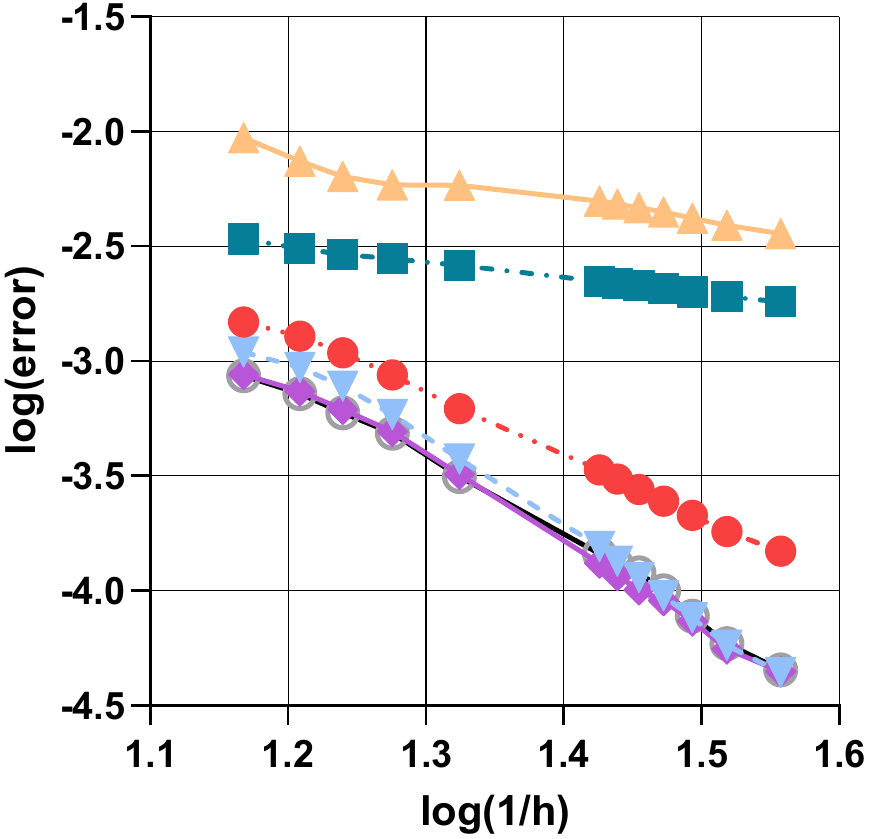}
}
\subfigure{
\includegraphics[scale=0.45]{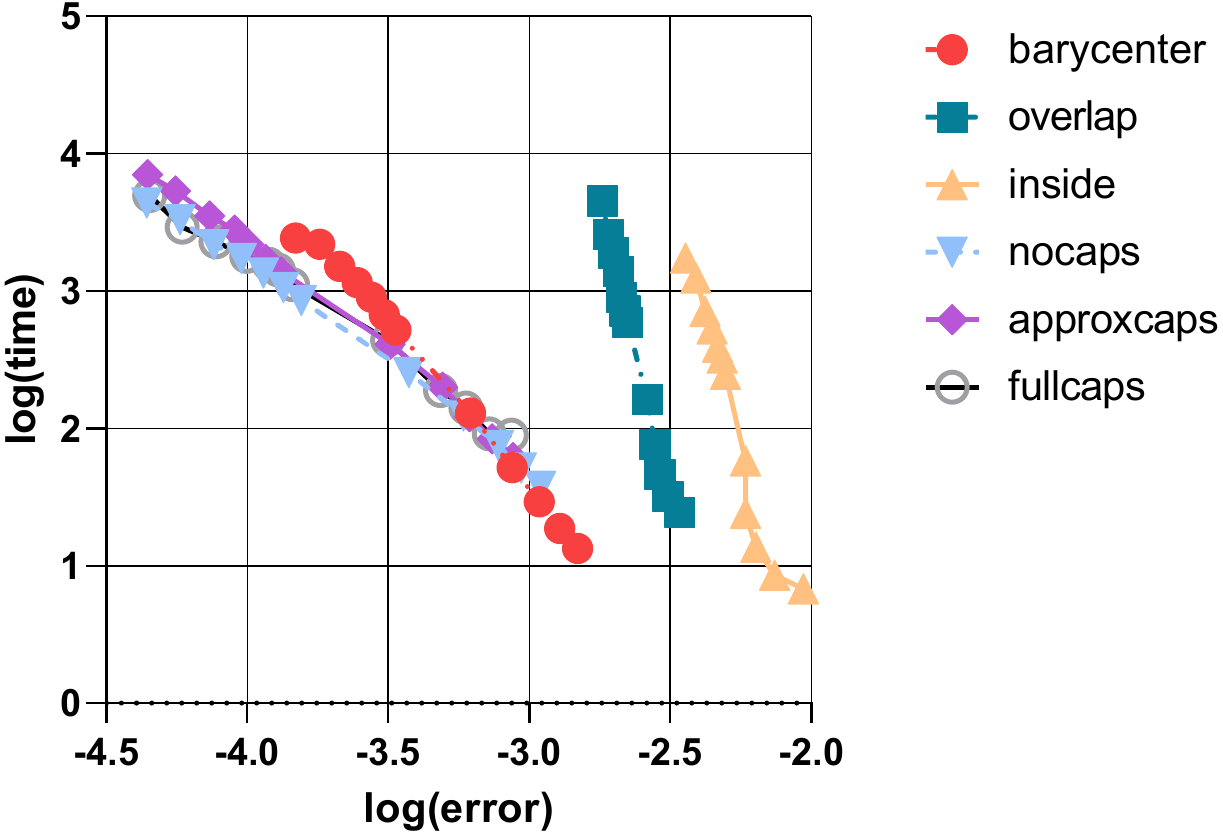}
}
\caption{Errors vs. average mesh sizes (left) and assembly times vs. errors (right) that are given in Tables \ref{table_3D_error} and \ref{table_3D_time} with the legend numbers corresponding to the numbering of columns in those tables.}
\label{fig:accur}
\end{figure}

Table \ref{table_3D_error} shows the convergence rates of the finite element approximation $u_{h,\sharp}$ with
$$\sharp \in \{overlap, inside, barycenter, nocaps, fullcaps\}.$$
One can observe a {less than 2-order convergence rates for the ``barycenter'', ``overlap'' and ``inside'' ball approximations}, and {3-order convergence rates for the ``nocaps'', ``approxcaps'' and ``fullcaps'' ball approximations}.

{In order to reveal the relationship between the numerical accuracy and the calculation cost of our algorithms, we define the ratio $\lambda$ to evaluate the effectiveness of convergency, i.e.
    $$\lambda_n = -\frac{\log(e_{n-1}/e_{n})}{\log(t_{n-1}/ t_n)}.$$
{Here $e_n$ is the $L^2$ error and $t_n$ is the execution time for the $n$th numerical experiment. } Figure \ref{fig:accur} and Table \ref{table_3D_time} show that the ``fullcaps'', ``nocaps'' and ``approxcaps'' approximation makes a better efficiency ratio ($\lambda \approx 0.667$) than the other approximations. This is, it requires a triple cost to double the accuracy when we take the ``fullcaps'', ``nocaps'' and ``approxcaps'' strategies, while the other strategies require more cost to double the accuracy. }

\begin{table*}[htb]  \caption{ $L_2$ errors of 3-D numerical results}
 \label{table_3D_error}
 \centering
 \begin{tabular}{llllllllll}
\toprule
  &$h_{avg}$ &barycenter &overlap &inside &nocaps &approxcaps &fullcaps  \\
\midrule
&0.0680   &1.48E-03&3.41E-03  &9.38E-03 &1.10E-03 &8.76E-04   &8.64E-04 \\
&0.0619    &1.28E-03&3.10E-03 &7.44E-03  &9.39E-04 &7.40E-04     &7.22E-04  \\
&0.0576    &1.09E-03&2.92E-03 &6.36E-03 &7.78E-04 &6.16E-04     &5.97E-04  \\
&0.0530    &8.70E-04&2.80E-03 &5.86E-03 &5.85E-04 &4.94E-04     &4.85E-04  \\
&0.0474   &6.20E-04&2.62E-03 &5.84E-03 &3.75E-04 &3.21E-04     &3.15E-04  \\
&0.0375   &3.37E-04&2.23E-03 &4.97E-03 &1.56E-04 &1.32E-04     &1.45E-04 \\
&0.0364 &3.06E-04&2.19E-03 &4.83E-03  &1.34E-04 &1.16E-04     &1.29E-04 \\
&0.0351 &2.76E-04&2.13E-03 &4.66E-03 &1.14E-04 &1.01E-04     &1.20E-04 \\
&0.0337  &2.45E-04&2.07E-03 &4.46E-03 &9.52E-05 &9.05E-05     &9.99E-05 \\
&0.0321  &2.13E-04&1.10E-03 &4.20E-03 &7.62E-05 &7.37E-05    &7.76E-05  \\
&0.0303  &1.81E-04&1.91E-03 &3.91E-03 &5.78E-05  &5.58E-05    &5.89E-05 \\
&0.0277   &1.49E-04&1.82E-03 &3.59E-03 &4.41E-05  &4.44E-05     &4.51E-05  \\
\bottomrule
\end{tabular}
\end{table*}

\begin{table*}[htb]  \caption{P2P time of 3-D numerical results}
 \label{table_3D_time}
 \centering
 \begin{tabular}{lllllllll}
\toprule
  &$h_{avg}$ &barycenter &overlap &inside &nocaps & approxcaps &fullcaps \\
\midrule
&0.0680  &13.42  &24.6       &6.79    &37.84     &61.42      &89.22 \\
&0.0619   &18.7   &31.92      &8.54   &51.48      &83.71      &91.31  \\
&0.0576   &29.25  &46.49      &13.67    &75.31   &121.61     &140.43  \\
&0.0530  &51.98  &75.88      &24.07   &123.22    &199.33     &188.77  \\
&0.0474   &128.94 &162.25     &57.90    &255.46   &409.98     &440.01 \\
&0.0375  &520.6  &589.55     &247.79   &852.50   &1379.48    &1113.79 \\
&0.0364 &671.97 &715.83     &324.21  &1060.67   &1699.53    &1785.25\\
&0.0351 &900.12 &892.45     &389.74   &1350.33  &2187.53    &1963.69 \\
&0.0337 &1153.1 &1387.56    &529.00   &1722.16  &2782.56    &2242.36 \\
&0.0321 &1507.2 &1890.53    &713.25  &2166.29   &3528.13    &2743.72 \\
&0.0303 &2192.72&2559.84    &1238.67  &3308.84  &5331.29    &3658.44\\
&0.0277 &2432.24&4514.34    &1732.44  &4351.03  &7014.38    &4954.51\\
\bottomrule
\end{tabular}
\end{table*}

\section{Conclusions}
\label{sec:conclusions}
In this paper,  a general framework {of FEM for solving} $n$-dimensional nonlocal modeling is discussed, and some measures are taken to alleviate some of the computational challenges brought by nonlocality. For example, we use ball approximation strategies to improve the accuracy of numerical integration and reduce the error of computation. We use the improved combinatorial map to express the topological structure of mesh and some iterators for fast neighborhood queries and dynamic mesh modifications. Besides, we provide a general algorithm for constructing the $n$-dimensional approximate ball, which alleviates the memory requirement and simplifies the operations in {ball approximation} from the engineering point of view. To increase the accuracy of the inner integration, we use combined geometry via boolean operations to represent the caps. Therefore, we proposed the new strategy named ``fullcaps'' to approximate the interaction domain and Monte Carlo sampling for the integration over fullcaps. The new ball approximation strategy ``fullcaps'' is superior to other approximations when $\delta \sim h$. In addition, we provide a method to parallelize the assembly process of finite element linear system, which can achieve a significant acceleration on modern muti-core computers and SIMD devices.

D'Elia et al. have given in \cite{approx} a 2D nonlocal problems' finite element solution procedure, as well as the quadrature rules, ball approximation strategies, and the corresponding error analysis. But,  there are few FEM implementations of higher dimensional nonlocal models up to now. Our work is the first concrete implementation for solving the 3D nonlocal problem on unstructured meshes with a parallel strategy. Higher dimensional nonlocal problems can be implemented by nD combinatorial map and corresponding topological iterators, with the same algorithm structure in 2D and 3D. Although the difficulty of implementation and the possible computational cost are high, it is still worth of developing an efficient implementation of FEM for solving nD nonlocal problems for its practical applications.



In the future, there are several points worth improving on our work. First, high precision quadrature rules for singular kernel functions are required for more engineering modeling. Second, there is no unified efficient algorithm in computational geometry for subdividing the simplex into polytopes in high-dimensional space.

\bibliographystyle{siamplain}
\bibliography{references}
\end{document}


\maketitle

\section{Introduction to combinatorial mapping}

First, we strictly introduce the theory of combinatorial mapping starting with the definition of the orientation of quasi-manifold \cite{G67}.

\begin{definition}[Quasi-manifold]\label{def:quasi-manifold}
Consider a nD simplicial complex
$$
\mathcal{K}:=\bigcup_{i=0}^{n}\bigcup_{k=1}^{\mathcal{S}_i}\{[c_k^i]\},
$$
where every $[c_k^i]$ is an $i$-dimensional simplex called $i$-cell. The number of all $i$-cell in $\mathcal{K}$ is $\mathcal{S}_i<\infty$. We also call $\mathcal{K}$ a cellular decomposition of the polytope $|\mathcal{K}|:=\bigcup_{i=0}^{n}\bigcup_{k=1}^{\mathcal{S}_i}[c_k^i]$ in the remainder of this paper. A nD quasi-manifold is a nD simplicial complex that satisfies:
\begin{itemize}
\item[(i)] For all $[c]\in\mathcal{K}$, there exists $n$-cell $[c^n]\in\mathcal{K}$ contains $[c]$ as a face of it, i.e. $[c]\preceq[c^n]$;
\item[(ii)] The $n$-cells of $\mathcal{K}$ are assembled along $(n-1)$-cells, such that each $(n-1)$-cell is incident at most two $n$-cells;
\item[(iii)] For any two $n$-cells $[c_1^n],[c^n]\in\mathcal{K}$, there are a series of $n$-cells and $(n-1)$-cells that are separated from each other:
    $$
    [c_1^n],[c_1^{n-1}],[c_2^n],[c_2^{n-1}],\cdots,[c_k^n],[c_k^{n-1}],[c^n]=[c_{k+1}^n],
    $$
    such that $[c_i^{n-1}]$ is the face of $[c_i^n]$ and $[c_{i+1}^n]$ for $1\leq i\leq k$.
\end{itemize}
\end{definition}
In fact, each $(n-1)$-cell is only able to be incident at one or two $n$-cell, and $(n-1)$-cell is incident at one $n$-cell only when it is on the boundary of polytope $|\mathcal{K}|$. Two $i$-cells are called adjacent if there exists an $(i-1)$-cell incident to both $i$-cells. The third item of Definition \ref{def:quasi-manifold} guarantees the $|\mathcal{K}|$ is a strongly connected manifold.

Any permutation of the vertices of the simplex $[c]\in\mathcal{K}$ determines an orientation of $[c]$ \cite{G67}. For every $n$-cell $[c^n]$, one of its orientation is denoted as ${\bf c}^n$, and the opposite orientation is $-{\bf c}^n$. For example, consider $[c^n]$ with vertices $v_0,v_1,\cdots,v_n$, the orientation that determined by the permutation of this vertices can be defined as ${\bf c}^n:=v_0v_1\cdots v_n$. For its $i$-th face $[c_i^{n-1}]$, we can define its orientation induced from the orientation ${\bf c}^n$ as ${\bf c}^{n-1}_i:=(-1)^iv_0v_1\cdots\hat{v}_i\cdots v_n$. After these preparations, the definition of orientable quasi-manifold is given as follows.

\begin{definition}[Orientable quasi-manifold \cite{G67}]\label{def:orient-quasi-manifold}
Consider the $\mathcal{K}$ defined in Definition \ref{def:quasi-manifold}. We call $\mathcal{K}$ an orientable quasi-manifold if it is able to define orientations on every $n$-cell of $\mathcal{K}$, which satisfies:
\begin{itemize}
\item If $[c_1^n]$ and $[c_2^n]$ are two adjacent $n$-cells, and $[c^{n-1}]$ is the $(n-1)$-cell that incident to $[c_1^n]$ and $[c_2^n]$, then the orientations on these two $n$-cells ${\bf c}_1^n$ and $-{\bf c}_2^n$ can induce the same orientation on $[c^{n-1}]$.
\end{itemize}
In this way, we are able to determine two different orientation of an orientable quasi-manifold $\mathcal{K}$. For any two $n$-cells $[c_1^n],[c_2^n]\in\mathcal{K}$, we call their orientations ${\bf c}_1^n$ and ${\bf c}_2^n$ are compatible if they are subject to the same orientation of $\mathcal{K}$, otherwise they are called incompatible.
\end{definition}
The closure of $\hat{\Omega}$ we consider in \eqref{eq:weak} is a subset of a $n$-dimensional ball, {and its triangulation $\mathcal{T}_{\Omega\cup\Omega_{\mathcal{I}}}^h$ is also an orientable quasi-manifold \cite{O06}.}

\begin{figure}[ht]
    \centering
    \subfigure[cell decomposition]{
    \includegraphics[scale=0.4]{fig/combinary_map/combatinal_map_4.pdf}
    \label{fig:cellDec}
    }
    \subfigure[incidence graph]{
    \includegraphics[scale=0.55]{fig/combinary_map/incidence.pdf}
    \label{fig:incidence}
    }
    \caption{(a) Shows a cell decomposition of a given geometry, where the cell $F_1$ is glued by $e_1$, $e_2$, $e_3$. (b) Shows the incidence relations about the cells in this geometry.}
\end{figure}

For a $n$-dimensional cellular decomposition, the incidence between cells can be expressed as an incidence graph, as the cells are incident to another if and only if the incidence graph exists a path starting from the cell of the higher dimension to the lower dimensional cell. For example, Figure \ref{fig:cellDec} shows a simple cellular decomposition of a 2D object and its incidence graph in Figure \ref{fig:incidence}. In this way, there exists several paths starting from the $n$-cells of the highest dimension to the $0$-cells in the bottom, such that each cell of the path (except the $n$-cell) is the face of the previous cell in the path.

{Now we are  able to define the cell-tuples in $n$-dimension \cite{cmap:2010} basing on incidence graph.} For a given quasi-manifold $\mathcal{K}$, a cell-tuple is an ordered sequence of cells:
$$d:=([c^n],[c^{n-1}],\dots,[c^{1}], [c^{0}]),$$
where $[c^i]$ is an $i$-cell of $\mathcal{K}$, and the cell-tuple $d$ is defined in the order of decreasing dimensions such that $[c^{i-1}]\prec [c^{i}]$ for all $0 < i \leq n$. 
For the sake of economy {of} expression, the mapping from dart $d$ to its $i$-dimensional cell is denoted by $C_i(d)$, and the mapping from cell $[c]$ to {one of} its darts is denoted by $D([c])$. {In implementation of our FEM, this dart can be chosen by user freely because we will only use $D(\cdot)$ as an initialization in our algorithm.}

For these cell-tuples corresponding to $\mathcal{K}$, the two cell-tuples are said to be $i$-adjacent if they share all but the $i$-dimensional cell. For example, in Figure \ref{fig:cellDec}, the $(F_1,e_2,v_2)$ and $(F_1,e_1,v_2)$ are $1$-adjacent, because they only have $1$-cell different. In fact, we can define a set of $n+1$ mappings $\{\alpha_i(\cdot)\}_{i=0}^{n}$ called partial perturbations. Intuitively, we first denote $\epsilon$ as a null and $B$ as the finite set that contains all cell tuples corresponding to $\mathcal{K}$. The partial permutation $\alpha_i$ related to the quasi-manifold $\mathcal{K}$ is a map from $B\cup \{\epsilon\}$ to $B\cup \{\epsilon\}$, defined based on the $i$-adjacency relations of the cell-tuples:
\begin{itemize}
\item $\alpha_i(\epsilon)=\epsilon$;
\item $\forall\, d\in B, \alpha_i(d)=d'$ if there exists a $d'$ that is $i$-adjacent to $d$, otherwise $\alpha_i(d)=\epsilon$.
\end{itemize}
{These $\{\alpha_i(\cdot)\}_{i=0}^{n}$ are uniquely defined on $B$. For a given partial permutation $f$, the inverse of it is defined as:}
\begin{itemize}
\item $f^{-1}(\epsilon)=\epsilon$;
\item $\forall\, d\in B, f^{-1}(d)=d'$ if there exists a $d'$ that satisfy $f(d')=d$, otherwise $f(d)=\epsilon$.
\end{itemize}

As what showed in \cite{cmap:2007,cmap:2010}, the $\{\alpha_i\}_{i=0}^{n-1}$ are bijections from $B$ to $B$, and $\alpha_n$ is also a bijection from $B$ to $B$ if $\mathcal{K}$ is a quasi-manifold without boundary. 
The $\{\alpha_i\}_{i=0}^{n}$ are all called partial involutions, because they all satisfy $\alpha_i^2(d)=d$ if and only if $\alpha_i(d)\neq\epsilon$. This means $\alpha_i=\alpha_i^{-1}$ by following the definition of inverse of partial permutations. With these preparations, an algebraic relationship can be defined over these cell-tuples, called generalized map (G-map):
\begin{definition}[Generalized map \cite{cmap:2010}]\label{def:G-map}
Assume $n\geq1$. For a given quasi-manifold $\mathcal{K}$, we define cell-tuples and their corresponding $\alpha_i(\cdot)$ according to the above rules. The G-map of $\mathcal{K}$ is an algebra $G=(B\cup\{\epsilon\},\alpha_0,\cdots,\alpha_n)$. It has the following properties:
\begin{itemize}
\item $B$ is the finite set that contains all cell tuples corresponding to $\mathcal{K}$;
\item $\forall\, i:0\leq i\leq n$, $\alpha_i$ is a partial involution on $B\cup\{\epsilon\}$;
\item $\forall\, i,j:0\leq i<i+2\leq j\leq n$, we have $\alpha_i$ and $\alpha_j$ convertable, which derives $\alpha_i\circ\alpha_j$ is a partial involution on $B\cup\{\epsilon\}$.
\end{itemize}
\end{definition}

\begin{figure}[htp!]
    \label{fig:maps}
    \centering
    \subfigure[generalized maps]{
    \includegraphics[scale=0.35]{fig/combinary_map/map_1.pdf}
    \label{fig:mapsa}
    }
    \subfigure[combinatorial map]{
    \includegraphics[scale=0.35]{fig/combinary_map/map_3.pdf}
    \label{fig:mapsb}
    }
    \caption{(a) shows the G-map, $\alpha_0$, $\alpha_1$, $\alpha_2$ corresponding to the cellular decomposition of Figure \ref{fig:cellDec}. (b) shows the C-map, $\beta_1$, $\beta_2$ by denoting a dart as (F,e,v). }
\end{figure}

In an implementation of G-map data structure, one can define $\epsilon$ as an empty pointer, or define a modified $\alpha_n$ in the following {way}:
$$
\dot{\alpha}_n(d)\left\{
\begin{aligned}
&= \alpha_n(d),\, &&\mbox{if $\alpha_n(d)\neq\epsilon$,}
\\
&= d,\, &&\mbox{if $\alpha_n(d)=\epsilon$,}
\end{aligned}
\right.
\qquad\mbox{for $d \in B$}.
$$
An example of using the modified $\alpha_n$ is shown in Figure \ref{fig:mapsa}, where a 2D geometric object is expressed by darts, and those partial permutations defined in generalized maps are expressed as colored lines between those darts. In this way, we are able to express a $n$-dimensional mesh by a G-map.

\begin{figure}[htp!]
    \label{fig:3dmaps}
    \centering
    \subfigure[]{
    \includegraphics[scale=0.4]{fig/combinary_map/3D/beta2.pdf}
    \label{fig:3dmapsa}
    }
    \subfigure[]{
    \includegraphics[scale=0.4]{fig/combinary_map/3D/beta1.pdf}
    \label{fig:3dmapsb}
    }
    \subfigure[]{
    \includegraphics[scale=0.4]{fig/combinary_map/3D/beta3.pdf}
    \label{fig:3dmapsc}
    }
    \caption{combinatorial map for an object consisting of two triangles. (a) the mapping $\beta_2$ associate two darts that have the common edge and volume but different faces. (b) the mapping $\beta_1$ associate two darts that have the common face and volume but different edges. (c) the mapping $\beta_3$ associate two darts that have the common face and edge but different volumes. }
\end{figure}

By Definition \ref{def:orient-quasi-manifold}, it is obvious that G-map can be used to represent both orientable or non-orientable quasi-manifolds. In fact, a dart $d\in B$ can determine an orientation in its corresponding $n$-cell $C_n(d)$:
\begin{definition}
For a given dart $d=([c^n],[c^{n-1}],\cdots,[c^1],[c^0])$, where $[c^0]\prec [c^1]\prec\cdots\prec [c^{n-1}]\prec [c^n]$, we can select a series of vertices by following the rule:
\begin{itemize}
\item For $0<i\leq n$, we define $a_i$ as the vertex of $[c^i]$ that doesn't {lie at} $[c^{i-1}]$, and define $a_0$ as the only vertex {lies at} $[c^0]$.
\end{itemize}
In this way, we call the orientation ${\bf c}^n:=a_na_{n-1}\cdots a_0$ as $[c^n]$'s orientation determined by $d$, say $\sigma(d)$.
\end{definition}
In the remainder of this paper, {we say $\sigma(d)=\sigma(d'),\,\forall\, d,d'\in B$ if they are compatible orientations of an orientable quasi-manifold}.

We state the G-map determined by an orientable quasi-manifold $\mathcal{K}$ can be viewed as a bipartite graph. Because for every given $d\in B$, we have $\sigma(d)=-\sigma(\alpha_i(d)),\,\forall\,0\leq i\leq n-1$, because $\alpha_i$ {is a rearrangement of
two vertices in} $C_n(d)$. And for the $\alpha_n$, the orientation of $C_{n-1}(d)$ induced by $\sigma(d)$ and $\sigma(\alpha_n(d))$ are the same, which also means $\sigma(d)=-\sigma(\alpha_n(d))$. So according to the Definition \ref{def:orient-quasi-manifold}, the two orientations of $\mathcal{K}$ can sperate $B$ into two disjoint and independent sets, and two darts belong to the same set if and only if they determine two compatible orientations {of} $\mathcal{K}$ respectively. Because all permutations $\{\alpha_i\}_{i=0}^n$ in G-map can change the orientation of $\mathcal{K}$, so the G-map can be viewed as a bipartite graph if $\{\alpha_i\}_{i=0}^n$ are the edges in the graph.

By taking advantage of the property that a G-map of an orientable quasi-manifold $\mathcal{K}$ can be viewed as a bipartite graph, the memory and operations we need in the implementation of this data structure can be halved \cite{cmap:2010} by using another data structure called combinatorial map (C-map).  For each pair of $0$-adjacent cell-tuples in G-map, they determine two different orientations {of} $\mathcal{K}$. {By} only keeping the cell-tuple that is compatible with a given orientation of  $\mathcal{K}$ is enough for expressing {the topology of} $\mathcal{K}$. The darts that determine a compatible orientation {of} $\mathcal{K}$ are defined as $R:=\{d'\in B|\sigma(d')=\sigma(d)\}$. We define the partial permutations
$$ \beta_i = \alpha_i \circ \alpha_0,\quad\forall\,0\leq i\leq n,$$
which connect two compatible darts from $0$-adjacent cell-tuples pairs. The properties of $\beta_i$ are given by the following lemma:
\begin{lemma}\label{lem:beta_property}
Assume $n\geq2$. For a given orientable quasi-manifold $\mathcal{K}$, it has a G-map $G=(B\cup\{\epsilon\},\alpha_0,\cdots,\alpha_n)$. By choosing $\forall\, d\in B$ ($d$ is definitely belonging to a $0$-adjacent cell-tuple pair), we fix $\sigma(d)$ as the orientation of $\mathcal{K}$, and define $R:=\{d'\in B|\sigma(d')=\sigma(d)\}$. By defining $\{\beta_i\}_{i=1}^n$ according to the above rules, we have $\beta_i$ satisfying the following properties:
\begin{enumerate}
\item $\beta_i\,(2\leq i\leq n-1)$ are involutions on $R$ without a fixed point, $\beta_n$ is a partial involution on $R\cup\{\epsilon\}$ without a fixed point (we don't consider fixed point such as $\beta_n(\epsilon)=\epsilon$);
\item $\forall\, i,j:1\leq i<i+2\leq j\leq n$, $\beta_i\circ\beta_j$ is a partial involution on $R\cup\{\epsilon\}$;
\item Consider the group generated by $\{\beta_i\}_{i=1}^n$, i.e. $S:=\langle\beta_1,\cdots,\beta_n\rangle$, it holds $S(d):=\langle\beta_1,\cdots,\beta_n\rangle(d)=R$;
\item $S=\langle\{\alpha_i\circ\alpha_j\}_{i,j=0}^n\rangle$.
\end{enumerate}
\end{lemma}
\begin{proof}
For item \emph{1}, since the G-map satisfies the second and third items in Definition \ref{def:G-map}, we have
$$
\beta_i^2=(\alpha_i\circ\alpha_0)^2=(\alpha_0\circ\alpha_i)^{-2}=Id,\quad\,\forall\, 2\leq i\leq n-1.
$$
The case of $\beta_n$ can be considered similarly, as we only need to consider $\epsilon$ in addition. For a given dart $d\in R$, and noting $2\leq n$, we have:
$$
\beta_n^2=(\alpha_n\circ\alpha_0)^2=(\alpha_n)^{2}\circ(\alpha_0)^{2}=(\alpha_n)^{2},
$$
which means $\beta_n$ is a partial involution. One can use similar method to prove item \emph{2} in Lemma \ref{lem:beta_property}.

For item \emph{3}, we have $S(d)\subset R$ holds obviously. So, we only need to prove $R\subset S(d)$, which can be derived from the item \emph{4} directly. {This is} because $\mathcal{K}$ is a connected complex, and every $\alpha_i$ can change its orientation to the opposite one. To prove item \emph{4}, we only need to notice:
$$
\alpha_i\circ\alpha_j=\alpha_i\circ\alpha_0\circ\alpha_0\circ\alpha_j=\beta_i\circ\beta_j^{-1}.
$$
The proof is completed.
\end{proof}

\section{The proof of the Theorem about orbit}

The {importance of defining orbit is to provide a tool that can efficiently find the darts associated with a given $i$-cell on the C-map}, for which we have the following theorem:
\begin{theorem}\label{thm:orbit}
Assume $n\geq 2$. Consider a given $n$-dimensional orientable quasi-manifold $\mathcal{K}$ and its C-map $C=(D,\beta_1,\cdots,\beta_n)$. Let $d=([c^n],[c^{n-1}],\dots,[c^{1}], [c^{0}])\in D$ be a dart, and $[c^i]$ is the $i$-cell of $d$. {If} the quasi-manifold $\mathcal{K}$ {satisfies} the following constraint:
\begin{itemize}
\item For any two $n$-cells $[c_1^n],[c^n]\in\mathcal{K}$, if there is an $p$-cell $[c^p]$ satisfying $[c^p]\prec[c_1^n],[c^n]$, then there is a series of $n$-cells and $(n-1)$-cells that are separated from each other:
    $$
    [c_1^n],[c_1^{n-1}],[c_2^n],[c_2^{n-1}],\cdots,[c_k^n],[c_k^{n-1}],[c^n]=[c_{k+1}^n],
    $$
    such that $[c_i^{n-1}]$ is the face of $[c_i^n]$ and $[c_{i+1}^n]$, and it satisfies $[c^p]\prec[c_i^{n-1}]$ for $1\leq i\leq k$.
\end{itemize}
Then it can be proved that
\begin{itemize}
\item $\{d'\in D|C_0(d')=[c^0]\}=\langle \{ \beta_i \circ \beta_j | \forall i,j: 1\leq i < j \leq n \} \rangle (d)\setminus\epsilon;$
\item $\{d'\in D|C_i(d')=[c^i]\}=\langle \beta_1,...,\beta_{i-1}, \beta_{i+1},...,\beta_n \rangle(d)\setminus\epsilon,\quad\forall\,1 \leq i \leq n.$
\end{itemize}
\end{theorem}

Before proving Theorem \ref{thm:orbit}, we first prove a lemma {about} how to permutate dart in a fixed simplex:
\begin{lemma}\label{lem:vertex_order}
Assume $n\geq 1$. Consider a given $n$-dimensional orientable quasi-manifold $\mathcal{K}$ and its G-map $G=(B,\alpha_0,\alpha_1,\cdots,\alpha_n)$. For two darts $d,d'\in B$ satisfying $C_p(d)=C_p(d'),\,\forall i\leq p\leq n$, there {exists} a {series} of partial permutations $\{\alpha_{n_k}\}_{k=1}^{z}$ that satisfy the following equation:
\begin{equation}\label{eq:dart_orbit_Gmap}
d'=\alpha_{n_{z}}\circ\alpha_{n_{z-1}}\circ\cdots\circ\alpha_{n_1}(d).
\end{equation}
{What's more, the series of partial permutations also satisfy $\alpha_p\notin\{\alpha_{n_k}\}_{k=1}^{z},\,\forall i\leq p\leq n$.}
\end{lemma}
\begin{proof}
{For simplicity, we denote the two darts as:}
\begin{align*}d'&=([c^n],[c^{n-1}]\dots,[c^{i+1}],[c^i],[{c^{i-1}}']\cdots,[{c^{1}}'], [{c^{0}}']),\\
d&=([c^n],[c^{n-1}]\dots,[c^{i+1}],[c^i],[c^{i-1}]\cdots,[c^{1}], [c^{0}]).
\end{align*}
{We will show that if we consider $d'=\alpha_{n_{z}}\circ\alpha_{n_{z-1}}\circ\cdots\circ\alpha_{n_1}(d)$, it can be viewed as a rearrangement of vertices of $[c^i]$.} We {will show how to choose} a series of $\{\alpha_{n_k}\}_{k=1}^{z}$ in the following process. {We can define} an {arrangement} of vertices of $[c^i]$ that are uniquely determined by the sub-tuple $([c^i],[c^{i-1}]\cdots,[c^{1}], [c^{0}])$:
$$
v_i\cdots v_2v_1v_0,
$$
where $v_p$ is the {vertex lies at} $[c_p]$ but not {at} $[c_{p-1}]$, and $v_0$ is the only vertex {lies at} $[c^0]$. {  It is not hard to find a rearrangement of these vertices:
$$
v_i\cdots v_{q+1},v_{q},\cdots v_2v_1v_0\rightarrow v_i\cdots v_{q},v_{q+1},\cdots v_2v_1v_0,\quad\forall 0\leq q\leq i-1,
$$
is related to one-step partial permutation $\alpha_q$.}
So, {there exists} a series of partial permutations $\{\alpha_{n_k}\}_{k=1}^{z}$ to guarantee
$$
d'=\alpha_{n_z}\circ\alpha_{n_{z-1}}\circ\cdots\circ\alpha_{n_{1}}(d),
$$
which also satisfies $\alpha_p\notin\{\alpha_{n_k}\}_{k=1}^{z},\,\forall i\leq p\leq n$. The proof is completed.
\end{proof}

{With} the help of Lemma \ref{lem:vertex_order}, we are able to consider {\bf the proof of Theorem \ref{thm:orbit}}.
\begin{proof}
{\bf Step1:} We first prove $\{d'\in D|C_0(d')=[c^0]\}=\langle \{ \beta_i \circ \beta_j | \forall i,j: 1\leq i < j \leq n \} \rangle (d)\setminus\epsilon$. When $1\leq j\leq n$, because $\beta_j$ and  $\beta_j^{-1}$ will not change the $0$-cell of {a} given dart, so we have
$$\langle \{ \beta_i \circ \beta_j | \forall i,j: 1\leq i < j \leq n \} \rangle(d)\setminus\epsilon\subset\{d'\in D|C_0(d')=[c^0]\}.$$

For any dart $d'\in D$ satisfying $C_0(d')=[c^0]$, we denote $[c_1^n]=C_n(d')$. Because of the extra constraint of the quasi-manifold $\mathcal{K}$ in the above, there {exists} a series of $n$-cells and $(n-1)$-cells that are separated from each other:
$$
[c_1^n],[c_1^{n-1}],[c_2^n],[c_2^{n-1}],\cdots,[c_k^n],[c_k^{n-1}],[c^n]=[c_{k+1}^n],
$$
such that $[c_i^{n-1}]$ is the face of $[c_i^n]$ and $[c_{i+1}^n]$ for $1\leq i\leq k$. Note we also have $[c^0]\prec[c_i^n],\,\forall1\leq i\leq k$. So, there is a series of darts $d_1,d_2,\cdots,d_{2k}$ satisfying
\begin{itemize}
\item $C_n(d_1)=[c_1^n]$, $C_n(d_{2k})=[c_{k+1}^n]$, $C_n(d_{2i})=C_n(d_{2i+1})=[c_{i+1}^n],\,\forall\,1\leq i\leq k-1$;
\item $C_{p}(d_{2i-1})=C_{p}(d_{2i}),\,\forall\,1\leq i\leq k,\,0\leq p\leq n-1$; specially, they satisfy $C_{n-1}(d_{2i-1})=C_{n-1}(d_{2i})=[c_{i}^{n-1}],\,\forall\,1\leq i\leq k$ and $\alpha_n(d_{2i-1})=d_{2i},\,\forall\,1\leq i\leq k$;
\item $C_0(d_i)=[c^0],\,\forall\,1\leq i\leq 2k$;
\item $C_n(d')=C_n(d_1)$, $C_n(d)=C_n(d_{2k})$. for simplicity, we denote $d_{0}:=d'$ and $d_{2k+1}:=d$.
\end{itemize}
{And there exists a series of partial permutations to guarantee}:
\begin{equation}\label{eq:d2d'}
d'=f_{1}\circ\alpha_{n}\circ f_2\circ\alpha_{n}\circ f_{3}\circ\cdots\circ\alpha_{n}\circ f_{k+1}(d),
\end{equation}
where $f_i\circ\cdots\circ\alpha_{n}\circ f_{k+1}(d)=d_{2i-2}$. {Here} $f_i\subset\langle\alpha_1,\alpha_2,\cdots,\alpha_{n}\rangle$ is a {partial} permutation, {and this} is because $C_0(d_{2i-2})=C_0(d_{2i-1})$ and $C_n(d_{2i-2})=C_n(d_{2i-1})$, so $f_i$ {related to an rearrangement of vertices} in the $n$-cell $[c^n_{i}]$ without {rearranging $v_0$}. And by following similar procedure in the proof of Lemma \ref{lem:vertex_order}, we can prove $f_i\subset\langle\alpha_1,\alpha_2,\cdots,\alpha_{n-1}\rangle$. At last, because of $\sigma(d')=\sigma(d)$, {so the total} number of partial permutations in \eqref{eq:d2d'} is even. {These partial permutations are denoted as} $\{\alpha_{n_k}\}_{k=1}^{2z}$, which guarantee the following equation:
\begin{equation}
\begin{aligned}\label{eq:dart_orbit0_Cmap}
d'=&\alpha_{n_{2z}}\circ\alpha_{n_{2z-1}}\circ\cdots\circ\alpha_{n_1}(d)\\
=&(\alpha_{n_{2z}}\circ\alpha_0\circ\alpha_0\circ\alpha_{n_{2z-1}})\circ\cdots(\alpha_{n_2}\circ\alpha_0\circ\alpha_0\circ\alpha_{n_1})(d)\\
=&(\beta_{n_{2z}}\circ\beta_{n_{2z-1}}^{-1})\circ\cdots(\beta_{n_2}\circ\beta_{n_1}^{-1})(d).
\end{aligned}
\end{equation}
{Here it also satisfies $\alpha_0\notin\{\alpha_{n_k}\}_{k=1}^{2z}$.}

Now, because we already know $d,d'\neq\epsilon$, we only need to prove $\beta_{n_2}\circ\beta_{n_1}^{-1}(d)\in\langle \{ \beta_i \circ \beta_j | \forall i,j: 1\leq i < j \leq n \} \rangle(d)$, where $\forall\,1\leq n_1,n_2\leq n$ and $\forall\,d\in \{d''\in D|\alpha_n(d'')\neq\epsilon\}$. First, for the partial involution $\beta_n$ and $\forall\,d\in \{d''\in D|\alpha_n(d'')\neq\epsilon\}$, we have
$$
\beta_n^{-1}(d)=(\alpha_n\circ\alpha_0)^{-1}(d)=\alpha_0^{-1}\circ\alpha_n^{-1}(d)=\alpha_0\circ\alpha_n(d)=\beta_n(d).
$$
{And because} we have proved in Lemma \ref{lem:beta_property} that $\beta_i,\,\forall2\leq i\leq n-1$ are involutions on $D$, so:
\begin{equation}\label{eq:partial_involution}
\beta_i^{-1}(d)=\beta_i(d),\,\forall\,d\in \{d''\in D|\alpha_n(d'')\neq\epsilon\},\,2\leq i\leq n.
\end{equation}

When $n_1=n_2\neq1$, because $\beta_{n_2}$ and $\beta_{n_1}$ are two partial involutions, we have $\beta_{n_2}\circ\beta_{n_1}^{-1}(d)=d$ according to \eqref{eq:partial_involution}; when $n_1=n_2=1$, because $\beta_1$ is a permutation, we have $\beta_1\circ\beta_1^{-1}=Id$.

When $1\leq n_1<n_2$. {According to} \eqref{eq:partial_involution}, we have $$\beta_{n_2}\circ\beta_{n_1}^{-1}(d)=\alpha_{n_2}\circ\alpha_{n_1}(d)=(\alpha_{n_1}\circ\alpha_{n_2})^{-1}(d)=(\beta_{n_1}\circ\beta_{n_2}^{-1})^{-1}(d)=(\beta_{n_1}\circ\beta_{n_2})^{-1}(d),$$ because $\beta_{n_2}$ is a partial involutions.

When $1\leq n_2<n_1$. {According to} \eqref{eq:partial_involution}, we have $\beta_{n_2}\circ\beta_{n_1}^{-1}(d)=\beta_{n_2}\circ\beta_{n_1}(d)$, because $\beta_{n_1}$ is a partial involutions.

The proof of $$\langle \{ \beta_i \circ \beta_j | \forall i,j: 1\leq i < j \leq n \} \rangle(d)\setminus\epsilon=\{d'\in D|C_0(d')=[c_0]\}$$
is completed. {The {\bf Step1} is finished.}

{\bf Step2:} When $1 \leq i\neq j \leq n$, because $\beta_j(\cdot)$ won't change the $i$-cell of the given dart, we have
$$\langle \beta_1,...,\beta_{i-1}, \beta_{i+1},...,\beta_n \rangle(d)\setminus\epsilon\subset\{d'\in D|C_i(d')=[c_i]\}.$$

For any dart $d'\in D$ satisfies $C_i(d')=[c_i]$, we state there must be an even number of partial permutations $\{\alpha_{n_k}\}_{k=1}^{2z}$ that satisfy $\alpha_i\notin\{\alpha_{n_k}\}_{k=1}^{2z}$, which guarantee the following equation
\begin{equation}\label{eq:dart_orbit_Gmap}
d'=\alpha_{n_{2z}}\circ\alpha_{n_{2z-1}}\circ\cdots\circ\alpha_{n_1}(d).
\end{equation}
In fact, if we denote $$d'=([{c^n}'],[{c^{n-1}}']\dots,[{c^{i+1}}'],[c^i],[{c^{i-1}}']\cdots,[{c^{1}}'], [{c^{0}}']),$$ $$d''=([{c^n}'],[{c^{n-1}}']\dots,[{c^{i+1}}'],[c^i],[c^{i-1}]\cdots,[c^{1}], [c^{0}]),$$
$$d=([c^n],[c^{n-1}]\dots,[c^{i+1}],[c^i],[c^{i-1}]\cdots,[c^{1}], [c^{0}]),$$
we can choose $\{\alpha_{n_k}\}_{k=1}^{2z}$ in the following process. Clearly, if we consider $d'\rightarrow d''$, according to Lemma \ref{lem:vertex_order}, {there exist} a series of partial permutations $\{\alpha_{n_k}\}_{k=1}^{p}$ guarantees
$$
d''=\alpha_{n_p}\circ\alpha_{n_{p-1}}\circ\cdots\circ\alpha_{n_{1}}(d').
$$
{This series also} satisfies $\alpha_i\notin\{\alpha_{n_k}\}_{k=1}^{p}$. By similar methods in {\bf Step1}, one can also {choose} a series of partial permutations $\{\alpha_{n_k}\}_{k=p+1}^{q}$ {to} guarantee
$$
d=\alpha_{n_{q}}\circ\alpha_{n_{q-1}}\circ\cdots\circ\alpha_{n_{p+1}}(d'').
$$
{This series} also satisfy $\alpha_i\notin\{\alpha_{n_k}\}_{k=p+1}^{q}$. Noting $\sigma(d')=\sigma(d)$, so $q$ is an even number, thus \eqref{eq:dart_orbit_Gmap} is proved.

So, by adding $\alpha_0$ to the \eqref{eq:dart_orbit_Gmap}, we have:
\begin{equation}
\begin{aligned}\label{eq:dart_orbit_Cmap}
d'=&\alpha_{n_{2z}}\circ\alpha_{n_{2z-1}}\circ\cdots\circ\alpha_{n_1}(d)\\
=&(\alpha_{n_{2z}}\circ\alpha_0\circ\alpha_0\circ\alpha_{n_{2z-1}})\circ\cdots(\alpha_{n_2}\circ\alpha_0\circ\alpha_0\circ\alpha_{n_1})(d)\\
=&(\beta_{n_{2z}}\circ\beta_{n_{2z-1}}^{-1})\circ\cdots(\beta_{n_2}\circ\beta_{n_1}^{-1})(d).
\end{aligned}
\end{equation}
By eliminating those $\beta_0=Id$ in the right of \eqref{eq:dart_orbit_Cmap}, we prove
$$\{d'\in D|C_i(d')=[c_i]\}=\langle \beta_1,...,\beta_{i-1}, \beta_{i+1},...,\beta_n \rangle(d)\setminus\epsilon.$$
The proof is completed.
\end{proof}

\section{Experiment specification}
This program is developed in Windows environment. The runtime environment and hardware specifications are given in Table \ref{table_time}. C++ and CUDA programming languages are used for code development.
\begin{table}[htb]  \caption{platform specification}
 \label{table_time}
 \centering
 \begin{tabular}{ll}
\toprule
\midrule
    Operating System  &Windows 11 \\
    \midrule
    CPU &Intel i7-8700K 6C12T 3.7Ghz \\
    \midrule
    Main Memory &128GB \\
\bottomrule

\end{tabular}
\end{table}

\bibliographystyle{siamplain}
\bibliography{references}